\newtheorem{theorem}{Theorem}[section]
\newtheorem{lemma}[theorem]{Lemma}
\newtheorem{proposition}[theorem]{Proposition}
\theoremstyle{definition}
\newtheorem{definition}[theorem]{Definition}
\theoremstyle{remark}
\newtheorem{remark}[theorem]{Remark}
\newcommand{\norm}[1]{\left\lVert#1\right\rVert}
\def\PP{\mathbb{P}}
\def\NN{\mathbb{N}}
\def\RR{\mathbb{R}}
\newcommand{\WEPAomega}{\textsf{WE}\mbox{-}\textsf{PA}^\omega}
\newcommand{\QFAC}{\textsf{QF}\mbox{-}\textsf{AC}}
\newcommand{\QFER}{\textsf{QF}\mbox{-}\textsf{ER}}
\newcommand{\DC}{\textsf{DC}}
\newcommand{\AC}{\textsf{AC}}
\newcommand{\INT}[1]{\int{#1}\,\mathrm{d}\PP}
\begin{document}

\title{Proof mining and probability theory}

\author[Morenikeji Neri and Nicholas Pischke]{Morenikeji Neri${}^{\MakeLowercase a}$ and Nicholas Pischke${}^{\MakeLowercase b}$}
\date{\today}
\maketitle
\vspace*{-5mm}
\begin{center}
{\scriptsize 
${}^a$ Department of Computer Science, University of Bath,\\
Claverton Down, Bath, BA2 7AY, United Kingdom,\\
${}^b$ Department of Mathematics, Technische Universit\"at Darmstadt,\\
Schlossgartenstra\ss{}e 7, 64289 Darmstadt, Germany,\\ 
E-mails: mn728@bath.ac.uk, pischke@mathematik.tu-darmstadt.de}
\end{center}

\maketitle
\begin{abstract}
We extend the theoretical framework of proof mining by establishing general logical metatheorems that allow for the extraction of the computational content of theorems with prima facie ``non-computational'' proofs from probability theory, thereby unlocking a major branch of mathematics as a new area of application for these methods. Concretely, we devise proof-theoretically tame logical systems that, for one, allow for the formalization of proofs involving algebras of sets together with probability contents as well as associated Lebesgue integrals on them and which, for another, are amenable to proof-theoretic metatheorems in the style of proof mining that guarantee the extractability of effective and tame bounds from large classes of ineffective existence proofs in probability theory. Moreover, these extractable bounds are guaranteed to be highly uniform in the sense that they will be independent of all parameters relating to the underlying probability space, particularly regarding events or measures of them. As such, these results, in particular, provide the first logical explanation for the success and the observed uniformities of the previous ad hoc case studies of proof mining in these areas and further illustrate their extent. Beyond these systems, we provide extensions for the proof-theoretically tame treatment of $\sigma$-algebras and associated probability measures using an intensional approach to infinite unions. Lastly, we establish a general proof-theoretic transfer principle that allows for the lift of quantitative information on a relationship between different modes of convergence for sequences of real numbers to sequences of random variables.
\end{abstract}
\noindent
{\bf Keywords:} Proof mining; Metatheorems; Probability theory; Egorov's theorem; Dominated convergence theorem.\\ 
{\bf MSC2020 Classification:} 03F10, 03F35, 28A12, 28A25, 60A10

\section{Introduction}

One of the fundamental driving questions of proof theory is the following: What is the computational content of a mathematical theorem and how can it be exhibited? Proof mining, which emerged as a subfield of mathematical logic in the 1990s through the work of Ulrich Kohlenbach and his collaborators,\footnote{Historically, proof mining has its roots in Kreisel's program of the ``unwinding of proofs'' \cite{Kre1951,Kre1952}. We refer to \cite{Koh2008} for a comprehensive monograph on proof mining and its applications until 2008 and we refer to the survey article \cite{KO2003} for details on the earlier development of proof mining.} aims at answering that question by extracting this computational content from theorems with proofs as they are found in the mainstream mathematical literature. This is, in particular, a non-trivial task as such proofs are prima facie noneffective, involving both classical logic as well as various non-computational (set-theoretic) principles. However, backed by a logical apparatus relying on the utilization of various methods from proof theory like functional interpretations and majorizability, this program of proof mining has had great success in various areas of mathematics, in particular regarding (nonlinear) analysis and optimization (see in particular the recent surveys \cite{Koh2017,Koh2019}).\\

Two areas where proof mining has previously only touched upon briefly are the fields of measure theory in general and probability theory in particular. Concretely, we refer to the works \cite{AO2021,ADR2012,AGT2010} which are essentially the only proof mining case studies in these areas preceding this work. From a practical perspective, this diffidence of proof mining regarding these areas is at least partially due to the fact that they are so far not substantiated by underlying logical methods as other areas of applications for proof mining are, a fact that also renders the previous applications, to a certain degree, ad hoc.

It is the aim of this paper to extend the current logical methods used in proof mining as to render them applicable to large classes of proofs from probability theory, in particular, so that they allow for a logical explanation of the success of the previous ad hoc case studies mentioned before (as well as of the various properties of the extracted content).\\

Concretely, the fundamental logical ``substrate'' of the proof mining program are the so-called general logical metatheorems.\footnote{Examples of such logical metatheorems for proof mining can, e.g., be found in \cite{FLP2019,GeK2008,GuK2016,Koh2005,Koh2008,KN2017,Leu2006,Leu2014,PS2022,Pis2023c,Pis2024,Pis2023,Sip2019}.} These use well-known so-called proof interpretations like G\"odel's functional interpretation \cite{Goe1958}, negative translations (see e.g.\ \cite{Kur1951}) and their extensions\footnote{For example, other proof interpretations used in proof mining are Kreisel's modified realizability interpretation \cite{Kre1951} for the treatment of semi-constructive proofs, monotone variants of the Dialectica interpretation \cite{Koh1996} to deal with the extraction of computable bounds from proofs or the related bounded functional interpretation \cite{FO2005}.} to provide a general result that quantifies and allows for the extraction of the computational content of large classes of theorems from their proofs (which in particular may involve classical logic and various non-computational principles). In that way, proof mining, as substantiated by these metatheorems, has led to hundreds of applications in the last decades. A crucial innovation in the techniques underlying these logical metatheorems was introduced by Kohlenbach in \cite{Koh2005}, marking the ``modern age'' of proof mining: while the metatheorems for proof mining preceding \cite{Koh2005} were based on ``pure'' systems for arithmetic in all finite types (see e.g.\ \cite{Koh2008,Tro1973}) and as such were restricted in their expressivity to dealing with spaces and structures that were representable as Polish metric spaces in the sense of Baire space (and thus separable), the paradigm first proposed in \cite{Koh2005} was to extend the language of the underlying systems with additional abstract base types which, together with additional constants and governing axioms, could be used to talk about much larger and broader classes of spaces beyond merely representable ones.\footnote{Whether a class of spaces can be treated in the context of the general logical metatheorems ultimately depends on the uniformity and complexity of the axioms describing said class as will also be discussed later in this paper.} The class of spaces and objects treated in this fashion has grown since then to a rather sizable amount, ranging from fundamental examples like general (non-separable) metric, hyperbolic, $\mathrm{CAT}(0)$, Banach and Hilbert spaces to much more involved objects like $\mathbb{R}$-trees, $L^p$-spaces, the dual of a (non-separable) Banach space as well as monotone operators and nonlinear semigroups, among many others. 

Further, the approach to represent various classes of spaces via abstract types has, in combination with an ingenious combination due to Kohlenbach (see the discussions in Section \ref{sec:metatheorem} later on for further details and references on this) of G\"odel's functional interpretation with (a suitable extension of) Howard's notion of majorizability \cite{How1973}, led to logical metatheorems that not only guarantee the extractability of effective bounds on non-effective existence statements but even guarantee a priori that these bounds are of a highly uniform nature, being independent of most data relating to any involved space or object.\\

This perspective of using abstract types to represent general spaces also underlies the approach towards a treatment of the various notions from probability theory proposed in this paper. Concretely, we utilize new abstract base types to introduce the relevant base sets for the measure spaces as well as to introduce the algebras of events over these sets on which the measures then operate. Using this approach, we establish a range of new metatheorems, the first of which allows for a treatment of proofs pertaining to the use of algebras (i.e.\ Boolean algebras of sets, also called fields) of events and so-called probability contents (i.e.\ finitely additive mappings, also called charges) on them. As e.g.\ highlighted in the seminal book by K.P.S.\ Bhaskara Rao and M.\ Bhaskara Rao \cite{RR1983} on general contents, these finitely additive measures offer a rich theory and are in a way ``more interesting, more difficult to handle, and perhaps more important than countably additive ones'' (which, as a statement, is attributed to Bochner in \cite{RR1983}). Further, beyond this mathematical perspective on the usefulness of (probability) contents, the nature of the applications of these systems discussed towards the end of this paper highlights that the theory of contents seems to be an invaluable perspective for applied proof theory in the context of probability theory which, from a logical point of view, further substantiates the statement quoted above and lets us assume that already this system for probability contents will provide a suitable base for proof mining developments in the context of probability theory in the future.

Beyond the theory of probability contents, we then provide extensions of the above mentioned system by an intensional treatment of countably infinite unions to provide access to the theory of $\sigma$-algebras and associated $\sigma$-additive probability measures and we also utilize an intensional approach to the space of bounded and Borel-measurable functions to introduce the Lebesgue integral in these contexts.\\

One of the most crucial features of the new metatheorems presented in this paper is that they provide the first concrete logical explanation of the uniformities of extractable bounds observed in the previously mentioned proof mining case studies, which were found to be independent of the measure, the underlying set and the algebra. As in the case of the first modern metatheorems mentioned before, this relies on a specific extension of the notion of majorizability due to Howard, which utilizes the probability content (or measure) to provide a corresponding notion of majorizability for the new abstract types. In particular, we want to note that this is the first time in proof mining that an extension of Howard's majorizability notion to abstract spaces is utilized that does not rely on any metric structure of the underlying spaces.\\

Besides guaranteeing this high degree of uniformity of the extracted bounds, these metatheorems for probability theory further elucidate the extent of the phenomenon of so-called proof-theoretic tameness in the sense of Kohlenbach \cite{Koh2020} (see also \cite{Mac2005,Mac2011} for related discussions of such phenomena), i.e.\ they show that also in the context of probability theory, one observes the empirical fact that most proofs, although in principle being subject to well-known G\"odelian phenomena, nevertheless ``seem to be tame in the sense of allowing for the extraction of bounds of rather low complexity''. Concretely, a crucial aspect of the approaches to various notions from probability theory taken here is that they avoid the computational strength inherent in some of the fundamental objects involved in this field (like the strength of comprehension needed to deal with countably infinite unions or Borel measurable functions, among other things) via the use of intensional methods and in that way, the complexity of the extractable information depends on (and can be a priori bounded in terms of) the complexity of the principles used in the corresponding proof and is not artificially high due to the abstract use of such notions. In that way, while the main base system taken in this paper is one that contains large amounts of comprehension (to illustrate the potential strength of systems which are amenable to proof mining methods), the approach to the various objects from probability theory taken here does not rely on these strong principles at all and thus can also immediately be developed over suitably weak subsystems (as e.g.\ the collection of systems introduced in \cite{Koh1996b} based on the Grzegorczyk hierarchy \cite{Grz1953}) where most of the mathematics discussed here can be similarly carried out but where then bounds of correspondingly low complexity could be guaranteed and so the approach chosen here for these notions provides the right system to illustrate this apparent tameness of the area.\\

The present approach to proof mining and probability theory is in particular to be distinguished with previous work on logical aspects of (quantitative) probability theory. On the proof-theoretic side, the main preceding work is that by Kreuzer \cite{Kre2015} on extracting computational content from proofs in measure theory which however relies on strong forms of comprehension for treating measure spaces, resulting in a rather restricted formal theory with a more limited scope of analyzable theorems, and which further does not guarantee any of the uniformity features for the quantitative information extracted thereby in contrast to the present approach. Outside of proof theory, finitizations of concepts from probability theory that enjoy similar uniformities as the ones considered here have also been obtained using tools from model theory, particularly ultraproducts \cite{AI2013,DI2017,GT2014}, though differing from the results presented here in both method and scope. The first crucial difference is that the proof-theoretic metatheorems presented here actually provide a method for extracting the uniform quantitative information of which the model-theoretic approach only can infer the existence of. Beyond that, as already commented on before, in the present approach the complexity of that information can be gauged beforehand based on the principles used in the proof and the proof-theoretic tameness of that information can be guaranteed thereby. Lastly, the model-theoretic approach is essentially fixed to focus on convergence statements and uniformities relating to their so-called metastable formulations whereas the proof-theoretic approach presented here does not have that limitation (which has a crucial impact on the range of possible applications as will also be discussed again in the end of the introduction). These facts crucially separate our work from the model-theoretic approach. Nevertheless, the model-theoretic approaches are certainly not subsumed by our work but rather complementary. For one, the model-theoretic approaches to uniformities of rates of metastability in probability theory only rely on the truth of a statement while our proof-theoretic results rely on the provability of the underlying theorem in an (albeit very strong) underlying theory. Also, we want to highlight that in particular the approach by Due\~{n}ez and Iovino \cite{DI2017} seems to rely, upon a closer inspection, on rather similar ideas for approaching some of the initial objects in question (e.g.\ by effectively treating probability contents on algebras instead of measures on $\sigma$-algebras and treating these underlying spaces with their operations as abstract entities), a fact that illustrates the apparent similarity of the problems that both the proof-theoretic and the model-theoretic approaches to these types of questions face and which in particular further highlights the naturalness of the approach followed here. However, our approach starts to crucially differ also conceptually instead of just methodologically from the work in \cite{DI2017} in the way that infinite unions and integration are treated in our systems and in the way that the uniformities of associated (extracted) bounds are guaranteed.\\

Still, besides these logical considerations it is the applications of the present metatheorems to probability theory which are arguably the most important consequence of the present work and we hence use the last two sections of this paper to substantiate the applicability of the logical metatheorems introduced here.  

For one, we outline in detail how the quantitative analysis of Egorov's theorem as presented in the seminal case study for proof mining in measure theory from \cite{ADR2012} formalizes in our systems (and thereby, as mentioned before, we also provide the first logical explanation of the uniformities of the bounds extracted therein\footnote{This is in particular to be compared with the work by Due{\~n}ez and Iovino \cite{DI2017} where a model-theoretic account of the uniformities of a quantiative variant of the dominated convergence theorem, partially akin to the results of \cite{ADR2012}, is given. These considerations of \cite{DI2017} however avoid Egorov's theorem and with that the central object of study in \cite{ADR2012} by which their quantitative version of the dominated convergence theorem is established therein.}). In particular, connecting to the previous discussion on the theory of probability contents as a suitable base for a formal system for proof mining for general probability measures, the analysis of the results from \cite{ADR2012} provided later highlights that these remain true for probability contents and so, in particular, illustrate that the notions and proofs produced in the work \cite{ADR2012} by following the finitary perspective of proof mining provide the right notions to simultaneously extend the analyzed results also to the theory of contents. In that way, this points to the apparent empirical phenomenon that finitary quantitative variants of notions and results from the theory of probability measures, as suggested by proof mining, seem to provide suitable analogue notions for the theory of probability contents and it also highlights the naturalness of the theory of contents as an underlying medium for developing a logical account of probability theory in the sense of proof mining. In that way, the fundamental relevance of (probability) contents is essentially rediscovered by the proof-theoretic approach presented in this paper.

For another, we establish a general so-called proof-theoretic transfer principle that allows for a lift of computational information on the relation between modes of convergence of sequences of real numbers to sequences of random variables (thereby providing a formal footing for this type of strategy which is rather abundant in probability theory and in particular features in some forthcoming case studies on proof mining and probability theory by the first author).\\

Besides these examples of application discussed here, the applicability of the systems presented in this paper is further substantiated by the fact the they explain the previously mentioned application \cite{AO2021} preceding this work as well as the very recent works \cite{Ner2024,NP2024} (together with forthcoming work by the authors together with Thomas Powell) as instances of the present methodology. In particular, in the context of the works \cite{Ner2024,NP2024}, the extractive proof-theoretic perspective of this work was crucial for obtaining the respective results.\\

Beyond that, we think that the present work lends itself both to further theoretical investigations on the extension of proof mining methods to further notions from probability and measure theory like Bochner integrals and martingales, among many others, as well as to substantiate and carry out many further (and potentially much more sophisticated) applications of proof mining in this area beyond the already mentioned. In particular, we want to mention that most ideas developed here could be extended, mutatis mutandis, to general finite contents and measures.

\section{Preliminaries}

The basic system that we rely on is the system $\mathcal{A}^\omega=\WEPAomega+\QFAC+\DC$ for classical analysis in all finite types as commonly used in proof mining, formalized via (a weakly extensional variant of) Peano arithmetic in all finite types together with a few choice principles (see e.g.\ \cite{Koh2005} where this notation for the system was, presumably, first introduced). As all systems introduced here will be extensions of this system $\mathcal{A}^\omega$, we in this section sketch the essential features relevant for this paper. For any further details, we refer to the works \cite{Koh2008,Tro1973}.\\

Here, we follow the definition of weakly extensional Peano arithmetic in all finite types $\WEPAomega$ as e.g.\ given in \cite{Koh2008} (see also \cite{Tro1973}) and, in that way, we do not recall all the defining features $\WEPAomega$ here and only focus on the three main aspects which are relevant in detail for this paper. In general, we denote function types using the bracket notation used in \cite{Koh2008}, i.e.\ $\rho(\tau)$ is the type of functions that map objects of type $\tau$ to objects of type $\rho$, and we use $T$ to denote the set of all finite types as usual, i.e.
\[
0\in T,\quad \rho,\tau\in T\rightarrow \rho(\tau)\in T.
\]
As usual, we denote pure types by natural numbers by setting $n+1:=0(n)$. The four central properties of $\WEPAomega$ that we need here are that, for one, the only primitive relation is equality at type $0$ (denoted by $=_0$) and higher-type equality is only defined as an abbreviation via recursion with
\[
x^{\tau(\xi)}=_{\tau(\xi)}y^{\tau(\xi)}:=\forall z^{\xi}\left( xz=_\tau yz\right).
\]

For another, $\WEPAomega$ crucially does not contain the full extensionality principles
\[
\forall x^{\tau(\rho)},y^{\rho},{y'}^{\rho}\left(y=_{\rho}y'\to xy=_{\tau}xy'\right) \tag{$\mathsf{E}_{\rho,\tau}$}
\]
because this would not allow for a result on program extraction. Instead, it only contains the quantifier-free extensionality rule
\[
\frac{A_0\to s=_\rho t}{A_0\to r[s/x^\rho]=_\tau r[t/x^\rho]}\tag{$\QFER$}
\]
where $A_0$ is a quantifier-free formula, $s$ and $t$ are terms of type $\rho$ and $r$ is a term of type $\tau$.

Further, $\WEPAomega$ contains constants $\underline{R}_{\underline{\rho}}$ for simultaneous primitive recursion in the sense of G\"odel \cite{Goe1958} and Hilbert \cite{Hil1926} as governed by the axioms
\[
\begin{cases}
(R_i)_{\underline{\rho}}0\underline{y}\underline{z}=_{\rho_i}y_i\\
(R_i)_{\underline{\rho}}(Sx)\underline{y}\underline{z}=_{\rho_i}z_i(\underline{R}_{\underline{\rho}}x\underline{y}\underline{z})x
\end{cases}\text{ for }i=1,...,k
\tag{$\underline{R}$}
\]
where $\underline{\rho}=\rho_1,\dots,\rho_k$ is a tuple of types, $\underline{y}=y_1,\dots,y_k$ with $y_i$ of type $\rho_i$ and $\underline{z}=z_1,\dots,z_k$ with $z_i$ of type $\rho_i(0)\underline{\rho}^t$ where we write $\underline{\rho}^t:=(\rho_k)\dots(\rho_1)$.

Lastly, due to the inclusion of the combinators of Sch\"onfinkel \cite{Sch1924} in the language of $\WEPAomega$, the system allows the definition of $\lambda$-abstraction in the sense that for any term $t$ of type $\tau$ and any variable $x$ of type $\rho$, we can construct a term $\lambda x.t$ of type $\tau(\rho)$ such that the free variables of $\lambda x.t$ are exactly those of $t$ without $x$ and so that
\[
\WEPAomega\vdash (\lambda x.t)(s)=_\tau t[s/x]
\]
for any term $s$ of type $\rho$.\\

Next to $\WEPAomega$, we as usual define the principle of quantifier-free choice $\QFAC$, i.e.\footnote{Here, and in the following, we use the notation $\underline{Y}\underline{x}$ to abbreviate $Y_1\underline{x},\dots,Y_k\underline{x}$ for $\underline{Y}=Y_1,\dots,Y_k$.}
\[
\forall \underline{x}\exists\underline{y}A_0(\underline{x},\underline{y})\to \exists\underline{Y}\forall\underline{x}A_0(\underline{x},\underline{Y}\underline{x})\tag{$\mathsf{QF}$-$\mathsf{AC}$}
\]
with $A_0$ quantifier-free and where the types of the variable tuples $\underline{x}$, $\underline{y}$ are arbitrary, as well as the principle of dependent choice $\DC$ defined as the collection of $\DC^{\underline{\rho}}$ for all tuples of types $\underline{\rho}$ with
\[
\forall x^0,\underline{y}^{\underline{\rho}}\exists \underline{z}^{\underline{\rho}} A(x,\underline{y},\underline{z})\to \exists \underline{f}^{\underline{\rho}(0)}\forall x^0 A(x,\underline{f}(x),\underline{f}(S(x))) \tag{$\DC^{\underline{\rho}}$}
\]
where $\underline{f}^{\underline{\rho}(0)}$ stands for $f_1^{\rho_1(0)},\dots,f_k^{\rho_k(0)}$ and $A$ may now be arbitrary.\\

Over $\mathcal{A}^\omega$, we will have to rely on some chosen representation of the real numbers as a Polish space and for that we follow definitions and conventions given in \cite{Koh2008}. In particular, rational numbers are represented using pairs of natural numbers and, in that context, we fix the same paring function $j$ as in \cite{Koh2005}:
\[
j(n^0,m^0):=\begin{cases}\min u\leq_0(n+m)^2+3n+m[2u=_0(n+m)^2+3n+m]&\text{if existent},\\0^0&\text{otherwise}.\end{cases}
\]
The usual arithmetical operations $+_\mathbb{Q},\cdot_\mathbb{Q},\vert\cdot\vert_\mathbb{Q}$, etc., are then primitive recursively definable through terms that operate on such codes and the usual relations $=_\mathbb{Q}$, $<_\mathbb{Q}$, etc., are definable via quantifier-free formulas.

For real numbers we then rely on a representation via fast converging Cauchy sequences of rational numbers with a fixed Cauchy modulus $2^{-n}$ (see \cite{Koh2008} for details), i.e.\ via objects of type $1$, and we consider $\mathbb{N}$ and $\mathbb{Q}$ as being embedded in that representation via the constant sequences. Also here, the usual arithmetical operations like $+_\mathbb{R}$, $\cdot_\mathbb{R}$, $\vert\cdot\vert_\mathbb{R}$, etc., are primitive recursively definable through closed terms and the relations $=_\mathbb{R}$ and $<_\mathbb{R}$, etc., now operating on type $1$ objects, are representable via formulas of the underlying language. Naturally, these relations are not decidable anymore but are given by $\Pi^0_1$- and $\Sigma^0_1$-formulas, respectively.

In the context of this representation of reals, we will later rely on an operator $\widehat{\cdot}$ which allows for an implicit quantification over all such fast-converging Cauchy sequences of rationals. Following \cite{Koh2008}, we define this operator via
\[
\widehat{x}n:=\begin{cases}xn&\text{if }\forall k<_0n\left( \vert xk-_\mathbb{Q}x(k+1)\vert_\mathbb{Q}<_\mathbb{Q}2^{-k-1}\right),\\
xk&\text{for $k<_0n$ least with }\vert xk-_\mathbb{Q}x(k+1)\vert_\mathbb{Q}\geq_\mathbb{Q}2^{-k-1}\text{ otherwise},\end{cases}
\]
turning $x$ of type $1$ into a fast-converging Cauchy sequence $\widehat{x}$, and we refer to \cite{Koh2008} for any further discussions of its properties.\\

In the context of the bound extraction theorems later on, we will rely on a canonical selection of a Cauchy sequence representing a give real number. Naturally, such an association will be non-effective. However, it will suffice that the operation behaves well-enough w.r.t.\ the notion of majorization. Following \cite{Koh2005}, this can be achieved for non-negative numbers via the function $(\cdot)_\circ$ defined by
\[
(r)_\circ(n):=j(2k_0,2^{n+1}-1),\label{def:circDef}
\]
where
\[
k_0:=\max k\left[\frac{k}{2^{n+1}}\leq r\right].
\]
Later, we will need an extension of this function $(\cdot)_\circ$ to all real numbers such that we retain these nice properties regarding majorizability and so, for $r<0$, we consider $(r)_\circ$ to be defined by
\[
(r)_\circ(n)=j(2\bar{k}_0\dotdiv 1,2^{n+1}-1)
\]
where 
\[
\bar{k}_0:=\max k\left[\frac{k}{2^{n+1}}\leq \vert r\vert\right].
\]
Then $(r)_\circ(n)=-_\mathbb{Q}(\vert r\vert)_\circ(n)$ and we get the following lemma containing exactly the properties that we later need for this notion to be useful in the context of majorizability (extending Lemma 2.10 from \cite{Koh2005}):

\begin{lemma}[essentially {\cite[Lemma 2.10]{Koh2005}}, see also {\cite[Lemma 2.1]{Pis2023}}]\label{lem:circprop}
Let $r\in\mathbb{R}$. Then:
\begin{enumerate}
\item $(r)_\circ$ is a representation of $r$ in the sense of the above (see again e.g.\ \cite{Koh2008}).
\item For $s\in [0,\infty)$, if $\vert r\vert \leq s$, then $(r)_\circ\leq_1 (s)_\circ$.
\item $(r)_\circ$ is nondecreasing (as a type 1 function).
\end{enumerate} 
\end{lemma}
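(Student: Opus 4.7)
The plan is to reduce all three claims to the non-negative case---which is precisely \cite[Lemma 2.10]{Koh2005}---by exploiting the identity $(r)_\circ(n) =_\mathbb{Q} -_\mathbb{Q}(|r|)_\circ(n)$ recorded in the excerpt. I would first verify this identity uniformly in $n$, paying particular attention to the boundary subcase $\bar{k}_0 = 0$ (which occurs precisely when $|r| < 2^{-(n+1)}$): there the truncated subtraction yields $2\bar{k}_0 \dotdiv 1 = 0$, and both $(r)_\circ(n)$ and $(|r|)_\circ(n)$ reduce to the code $j(0, 2^{n+1} - 1)$ of the rational $0$, so the identity holds trivially. Outside this edge case, $2\bar{k}_0 \dotdiv 1 = 2\bar{k}_0 - 1$ is the standard code for the negative dyadic $-\bar{k}_0/2^{n+1}$. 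Given this, claim (1) for $r<0$ follows by reading off the fast Cauchy estimate $|(r)_\circ(k)-(r)_\circ(k+1)|_\mathbb{Q} < 2^{-(k+1)}$ and the approximation bound $|(r)_\circ(n)-r|_\mathbb{R} \leq 2^{-n}$ from the corresponding statements for $(|r|)_\circ$ by rational negation, using $r=-|r|$; the case $\bar{k}_0=0$ is absorbed since then $|0-r|=|r|<2^{-(n+1)}$.

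For claims (2) and (3), I again split on the sign of $r$, the case $r\geq 0$ being immediate from Kohlenbach's lemma. For claim (2) with $r<0$ and $|r|\leq s$, the inequality $\bar{k}_0/2^{n+1}\leq|r|\leq s$ forces $\bar{k}_0\leq k_0^{(s)}$, hence $2\bar{k}_0\dotdiv 1\leq 2k_0^{(s)}$; since the pairing $j$ is monotone non-decreasing in its first argument (immediate from its explicit quadratic form $j(n,m)=((n+m)^2+3n+m)/2$), this yields $(r)_\circ(n)\leq_0(s)_\circ(n)$ pointwise in $n$. For claim (3) with $r<0$, the observation $2\bar{k}_0^{(n)}/2^{n+2}=\bar{k}_0^{(n)}/2^{n+1}\leq|r|$ gives $2\bar{k}_0^{(n)}\leq\bar{k}_0^{(n+1)}$, whence $2\bar{k}_0^{(n)}\dotdiv 1\leq 2\bar{k}_0^{(n+1)}\dotdiv 1$; combined with $2^{n+1}-1\leq 2^{n+2}-1$ and monotonicity of $j$ in both arguments, this gives $(r)_\circ(n)\leq_0(r)_\circ(n+1)$.

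The main obstacle is not any single step but rather the careful bookkeeping required around the boundary $\bar{k}_0=0$: the truncated subtraction $\dotdiv$ behaves differently there than for $\bar{k}_0\geq 1$, and one must verify that both the sign-flip identity and the pointwise code inequalities remain valid across this transition (in particular, for claim (3) when one passes from a stage $n$ with $\bar{k}_0^{(n)}=0$ to a stage $n+1$ with $\bar{k}_0^{(n+1)}\geq 1$). Once this is pinned down, all three parts reduce cleanly to the already-established non-negative case.
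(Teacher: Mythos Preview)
The paper does not give its own proof of this lemma, deferring entirely to the cited references \cite[Lemma~2.10]{Koh2005} and \cite[Lemma~2.1]{Pis2023}. Your proposal is correct and is precisely the natural reduction to the non-negative case that the paper's citation structure (and the sign-flip identity $(r)_\circ(n)=_{\mathbb{Q}}-_{\mathbb{Q}}(|r|)_\circ(n)$ recorded just before the lemma) implicitly points to; your careful bookkeeping around the $\bar{k}_0=0$ boundary and the appeal to monotonicity of $j$ in each argument are exactly what is needed to fill in the details.
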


Lastly, we write $r_\alpha$ for the unique real represented by $\widehat{\alpha}$ for a given a sequence $\alpha\in\mathbb{N}^\mathbb{N}$ and we sometimes write $[\alpha](n)$ for the $n$-th element of that sequence for better readability.\\

In terms of notation, we want to note that, to enhance readability, we will omit the subscripts of the arithmetical operations for $\mathbb{R}$ everywhere and similarly, we will also omit types of variables whenever convenient and omit types in proofs almost always. Further, we will omit the operation $\cdot_{\mathbb{R}}$ often altogether. Lastly, we throughout denote the powerset of a set $X$ by $2^X$.

\section{Systems for algebras of sets}

In this section, we develop the underlying systems on which we will later bootstrap our treatment of probability contents and probability measures as well as of all the other respective extensions discussed before. As such, we begin with a treatment of algebras of sets as the most basic underlying algebraic notion that is essential for the theory of contents. For references on these basic definitions and their properties, if nothing else is mentioned otherwise, we mainly refer to \cite{RR1983}.

\begin{definition}[Algebra of sets]
Let $\Omega$ be a set and $S\subseteq 2^\Omega$. Then $S$ is called an algebra of sets (or simply an algebra) if $\emptyset\in S$ and for any $A,B\in S$, it holds that $A^c:=\Omega\setminus A\in S$ and $A\cup B\in S$.
\end{definition}

The approach that we take towards a formal system for algebras of sets is to use abstract types to represent both the underlying ground set $\Omega$ as well as the algebra $S\subseteq 2^\Omega$. One then has to restore the structure of $S$ as a collection of subsets over $\Omega$ with certain operations on them by including additional constants that reintroduce these operations in this abstract setting.\\

Concretely, to form a system for the treatment of algebras, we extend the previously discussed set of types $T$ by two new abstract types $\Omega$ and $S$, forming the extended set of types $T^{\Omega,S}$ defined by
\[
0,\Omega,S\in T^{\Omega,S},\quad \rho,\tau\in T^{\Omega,S}\rightarrow \rho(\tau)\in T^{\Omega,S},
\]
and, over the resulting language, we then utilize this augmented set of types to introduce the following new constants to induce the usual structure on the set represented by $S$ in relation to $\Omega$ as mentioned before:
\begin{itemize}
\item $\mathrm{eq}$ of type $0(\Omega)(\Omega)$;
\item $\in$ of type $0(S)(\Omega)$;
\item $\cup$ of type $S(S)(S)$;
\item $(\cdot)^c$ of type $S(S)$;
\item $\emptyset$ of type $S$;
\item $c_\Omega$ of type $\Omega$.
\end{itemize}
The constant $\mathrm{eq}$ serves as an abstract account of the equality relation between objects of type $\Omega$ while the constant $\in$ serves as an abstract account of the element relation between elements as objects of type $\Omega$ and sets as objects of type $S$. The constants $\cup$ and $(\cdot)^c$ reintroduce the respective operations of union and complement for the abstract type $S$ and $\emptyset$ provides a constant representing the empty set. The constant $c_\Omega$ in particular is intended to witness that the underlying set $\Omega$ is non-empty. We often simply write $A^c$ instead of $(A)^c$ for $A$ of type $S$. Further, we abbreviate $\in x A =_0 0$ by $x\in A$ and, similarly, we write $x\not\in A$ for $\in x A\neq_0 0$. Lastly, we define $\Omega:=\emptyset^c$ as a notation of the top element of $S$.\footnote{This is not to be confused with the type $\Omega$ but the context will make it clear which of these two readings is intended.} Also regarding notation, we introduce intersections as an abbreviation by defining
\[
A\cap B:=(A^c\cup B^c)^c
\]
for terms $A^S, B^S$.\\

We write $x=_\Omega y$ as an abbreviation of $\mathrm{eq} xy=_00$  for objects $x^\Omega,y^\Omega$. Using $\in$, we introduce equality on $S$ via the following abbreviation: for $A^S$ and $B^s$, we define 
\[
A =_S B:\equiv \forall x^\Omega(x\in A \leftrightarrow x \in B).
\]
Note that $=_S$ clearly is, probably, an equivalence relation. Furthermore, we introduce the abbreviation 
\[
A\subseteq_S B :\equiv \forall x^\Omega \left(x \in A \rightarrow x \in B\right)
\]
for $A,B$ of type $S$ and it is immediate to show that $\subset_S$ forms a partial order with respect to equality defined by $=_S$.\\

For axioms, we first specify that $\mathrm{eq}$ represents an equivalence relation:
\begin{align*}
&\forall x^\Omega,y^\Omega(\mathrm{eq} xy\le_0 1), \tag*{$(\mathrm{eq})_1$} \\
&\forall x^\Omega,y^\Omega,z^\Omega \left(x=_\Omega x\land \left( x=_\Omega y \to y=_\Omega x\right)\land \left(x=_\Omega y\land y=_\Omega z\to x=_\Omega z\right)\right). \tag*{$(\mathrm{eq})_2$}
\end{align*}
Further, we axiomatize that $\in$, as a relation, is bounded by $1$ on all inputs and behaves as an element relation regarding the operations of union and complement as well as with respect to the empty set:
\begin{align*}
&\forall x^\Omega \forall A^S (\in x A \le_0 1), \tag*{$(\in)_1$} \\
&\forall x^\Omega (x \not\in \emptyset), \tag*{$(\in)_2$}\\
&\forall x^\Omega \forall A^S, B^S(x \in A \cup B \leftrightarrow x \in A \lor x \in B), \tag*{$(\in)_3$}\\
&\forall x^\Omega \forall A^S (x \in A^c \leftrightarrow x \not\in A). \tag*{$(\in)_4$}
\end{align*}
Based on the fact that inclusions of elements $x^\Omega$ in elements $A^S$ as facilitated by $\in$ are quantifier-free assertions, the above axioms are (generalized) $\Pi_1$-sentences and so they are in particular immediately admissible in the context of bound extraction theorems based on the Dialectica interpretation (as will be discussed later in more detail).

\begin{definition}
We write $\mathcal{F}^\omega$ for the system resulting from $\mathcal{A}^\omega$ over the augmented language including the types $\Omega,S$ (where all the respective constants and axioms now are allowed to also refer to these new types, if applicable) by extending this system with the constants $\mathrm{eq},\in,\cup,(\cdot)^c,\emptyset,c_\Omega$ and the axioms $(\mathrm{eq})_1$ -- $(\mathrm{eq})_2$ as well as $(\in)_1$ -- $(\in)_4$.
\end{definition}

We now begin by showing some basic properties of the above operations on algebras provable in this system $\mathcal{F}^\omega$ which, for one, amount to deriving the essential algebraic properties of $S$ as a subalgebra of the full Boolean algebra of the power set of $\Omega$. Further, for another, all algebraic operations on $S$ behave in a provably extensional way.

\begin{proposition}\label{pro:basic_prop_fields}
The operations $\cup$ and $(\cdot)^c$ are provably extensional in $\mathcal{F}^\omega$, i.e.\ $\mathcal{F}^\omega$ proves:
\begin{enumerate}[(i)] 
\item $\forall A^S, {A'}^S, B^S, {B'}^S(A=_SA'\land B=_SB'\rightarrow A\cup B=_SA'\cup B')$,
\item $\forall A^S, {A'}^S(A=_SA'\rightarrow A^c=_S{A'}^c)$.
\end{enumerate}
Further, all the axioms of Boolean algebras, instantiated using $\cap,\cup,(\cdot)^c$ and $=_S$, can be derived in $\mathcal{F}^\omega$. Lastly, over $\mathcal{F}^\omega$, $A \subseteq_S B$ is equivalent to both $A =_S B \cap A$ and $B =_S A \cup B$ for terms $A^S,B^S$.
\end{proposition}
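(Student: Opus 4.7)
The plan is to reduce every claim to propositional reasoning about the atomic formulas $x \in A$ by unfolding $=_S$ and $\subseteq_S$ into their definitions and pushing membership through the constructors $\cup$, $(\cdot)^c$, and (via De Morgan) $\cap$ using the axioms $(\in)_2$--$(\in)_4$. Since $\mathcal{A}^\omega$ is classical, excluded middle and double negation elimination are available throughout, and it is also worth recording that by $(\in)_1$ the term $\in xA$ is bounded by $1$, so the atom $x \in A$ is decidable.

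For the extensionality claims (i) and (ii), I fix $x^\Omega$ and argue as follows. From $A =_S A'$ and $B =_S B'$ I obtain $x \in A \leftrightarrow x \in A'$ and $x \in B \leftrightarrow x \in B'$. By $(\in)_3$ applied on both sides,
\[
x \in A \cup B \leftrightarrow x \in A \lor x \in B \leftrightarrow x \in A' \lor x \in B' \leftrightarrow x \in A' \cup B',
\]
and universal generalization over $x$ yields (i). For (ii), I use $(\in)_4$ to get
\[
x \in A^c \leftrightarrow x \not\in A \leftrightarrow x \not\in A' \leftrightarrow x \in {A'}^c.
\]

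For the Boolean algebra axioms, I proceed uniformly. Given any Boolean identity $t(\underline{A}) = s(\underline{A})$ in variables $A_1,\dots,A_n$, I first rewrite each occurrence of $\cap$ via its definition $A \cap B := (A^c \cup B^c)^c$ and then translate $x \in t(\underline{A})$, using $(\in)_2$--$(\in)_4$, into a fixed Boolean combination of the atoms $x \in A_i$. The assertion $t(\underline{A}) =_S s(\underline{A})$ then unfolds to the universal closure of a propositional tautology in these atoms, which is provable in $\mathcal{A}^\omega$. De Morgan's law for $\cap$ in terms of $\cup$ and $(\cdot)^c$ is in particular immediate from the definition of $\cap$.

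Finally, for the last equivalences, the same unfolding yields $x \in B \cap A \leftrightarrow x \in B \land x \in A$ and $x \in A \cup B \leftrightarrow x \in A \lor x \in B$. Hence $A =_S B \cap A$ becomes $\forall x(x \in A \leftrightarrow x \in B \land x \in A)$ and $B =_S A \cup B$ becomes $\forall x(x \in B \leftrightarrow x \in A \lor x \in B)$, and each is propositionally equivalent to $\forall x(x \in A \to x \in B)$, i.e.\ to $A \subseteq_S B$. I do not anticipate any genuine obstacle; the whole proposition is, in essence, the pointwise verification that the $(\in)$-axioms force $S$ to behave as a Boolean subalgebra of $2^\Omega$ modulo $=_S$. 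The only bookkeeping to keep in mind is that the absence of full extensionality in $\WEPAomega$ is harmless here, since the extensionality of $\cup$ and $(\cdot)^c$ asserted in (i) and (ii) is the one with respect to $=_S$ and is derived directly from $(\in)_3$ and $(\in)_4$ rather than from any extensionality principle for the constants themselves.
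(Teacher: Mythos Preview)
Your proposal is correct and follows essentially the same approach as the paper: unfold $=_S$ and $\subseteq_S$, push membership through $\cup$ and $(\cdot)^c$ via $(\in)_3$ and $(\in)_4$, and reduce everything to propositional reasoning in the atoms $x\in A_i$. The paper in fact only spells out (i) and (ii) in exactly the way you do and then dismisses the Boolean-algebra identities and the order characterizations as ``easily derived from the axioms $(\in)_2,\dots,(\in)_4$'', so your treatment of those parts is, if anything, more explicit than the original.
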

\begin{proof}
We only show items (i) and (ii) as these illustrate the style of proof that one typically follows in $\mathcal{F}^\omega$ to reason about the algebraic structure of $S$. The identities of Boolean algebras and the equivalent formulations of the order in terms of meet and join are then easily derived from the axioms $(\in)_2,\dots,(\in)_4$.
\begin{enumerate}[(i)]
\item Fix $A, A', B, B'$ and assume $A=_SA'$ as well as $B=_SB'$. We need to show that 
\[
\forall x(x\in A\cup B\leftrightarrow x\in A'\cup B'). 
\]
Let $x$ be arbitrary. Then $x\in A\cup B$ is equivalent to $x\in A\lor x\in B$ by $(\in)_3$. By assumption of $A=_SA'$ and $B=_SB'$, we have that $x\in A\lor x\in B$ is equivalent to $x\in A'\lor x\in B'$ and so to $x\in A'\cup B'$ by $(\in)_3$, which yields the claim.
\item Fix $A,A'$ and assume $A=_SA'$. We need to show that
\[
\forall x(x\in A^c\leftrightarrow x\in {A'}^c).
\]
Let $x$ be arbitrary. Then $x\in A^c$ is equivalent to $x \not\in A$ by $(\in)_4$. By assumption of $A=_SA'$, we have that $x\not\in A$ is equivalent to $x\not\in A'$ and thus to $x\in {A'}^c$ again by $(\in)_4$.
\end{enumerate}
\end{proof}

\begin{remark}
Also the constants $\mathrm{eq}$ and $\in$ are immediately provably extensional in $\mathcal{F}^\omega$, as can be shown using the quantifier-free extensionality rule.
\end{remark}

Using the recursor constants of the underlying language of $\mathcal{F}^\omega$ in combination with the union operation $\cup$ immediately allows one to also talk about arbitrary finite unions. Concretely, given a sequence of events $A^{S(0)}$ and two natural numbers $n^0\leq_0 m^0$, we use the abbreviation
\[
\bigcup_{i=n}^{m}A(i):=R_S(m-n,A(n),\lambda B,x.(B\cup A(n+x+1)))
\]
where $R_S$ is a (single) type $S$ recursor constant. For $m<_0n$, we simply set $\bigcup_{i=n}^{m}A(i):=\emptyset$. We then dually write 
\[
\bigcap_{i=n}^mA(i):=\left(\bigcup_{i=n}^m(A(i))^c\right)^c.
\]
It easy to show by induction that the previous extensionality result for $\cup$ extends to these finite unions.

\section{Systems for contents on algebras of sets}

We now augment the previous system $\mathcal{F}^\omega$ for the treatment of algebras so that we arrive at a system suitable for treating proofs from the theory of probability contents in the sense of the following definition:\footnote{While contents are also studied over much sparser structures than algebras of sets, we here only consider the case of a content defined on such an algebra.}

\begin{definition}[Contents]
Let $\Omega$ be a set and $S\subseteq 2^\Omega$ be an algebra. A content on $S$ is a mapping $\mu:S\to [0,\infty]$ such that $\mu(\emptyset)=0$ and $\mu(A\cup B)=\mu(A)+\mu(B)$ for $A,B\in S$ with $A\cap B=\emptyset$.

We say that $\mu$ is a probability content if $\mu(\Omega)=1$.
\end{definition}
We mainly denote probability contents by the symbol $\PP$. Again, we mainly refer to \cite{RR1983} as a standard reference for the theory of contents.\\

The concrete approach that we now take for a formal system for probability contents on algebras is to introduce an additional constant 
\begin{itemize}
\item $\PP$ of type $1(S)$
\end{itemize}
to the language of the system $\mathcal{F}^\omega$. The first defining properties of $\PP$ as a probability content are then easily formalized in the underlying language as
\begin{align*}
& \forall A^S (0 \le_{\RR} \PP(A) \le_{\RR} 1), \tag*{$(\PP)_1$}\\
& \PP(\emptyset) =_{\RR} 0. \tag*{$(\PP)_2$}
\end{align*}
These statements $(\PP)_1$ and $(\PP)_2$ are again purely universal statements and therefore immediately admissible in the context of metatheorems based on the (monotone) functional interpretation.

The last property of $\PP$, i.e.\ additivity, if formalized naively via
\[
\forall A^S, B^S(A\cap B=_S\emptyset\to\PP(A \cup B) =_{\RR} \PP(A) + \PP(B)),
\]
is not purely universal based on the internal definition of $=_S$ and is instead equivalent over $\mathcal{F}^\omega$ (extended with the constant $\PP$) to the following generalized $\Pi_3$-sentence:
\[
\forall A^S, B^S\exists x^\Omega(x\not\in A\cap B\to\PP(A \cup B) =_{\RR} \PP(A) + \PP(B)).
\]
Similar to how the class of so-called type $\Delta$ sentences is treated in e.g.\ \cite{GuK2016}, it is clear that this statement would be admissible in the context of bound extraction theorems based on the monotone Dialectica interpretation if the $x$ could be conceived of as being bounded in a suitable sense relative to $A$ and $B$. Now, as a matter of fact, a crucial perspective for our formal approach to deriving bound extraction theorems for these systems for algebras and probability contents will be that the whole space $\Omega$ can be naturally regarded as uniformly bounded and, in the context of a corresponding suitable extension of the notion of majorizability to $\Omega$ which reflects this perspective via assuming that there is a uniform majorant for all $x^\Omega$, the above axiom actually has a trivial monotone functional interpretation and is thus admissible in the context of the approach to proof mining metatheorems via such a variant of the Dialectica interpretation. This will be discussed in full formal detail later on so that here, we for now are content with just considering quantification over $\Omega$ as ``bounded'' and so as ``proof-theoretically harmless''.\\

However, if we were to admit the above sentence as the sole axiom, we would be tasked with deriving all the other properties of $\PP$ from this axiom, including monotonicity and thus extensionality of the content as a function, which would require many subtle manipulations of various equalities using the quantifier-free extensionality rule. We therefore instead opt for the following axiomatization which eases the formal development of these properties in the resulting system: For one, instead of additivity, we instead add the following generalized additivity law which holds for probability contents on algebras:
\[
\forall A^S, B^S(\PP(A \cup B) =_\mathbb{R} \PP(A) + \PP(B)-\PP(A\cap B)).\tag*{$(\PP)_3$}
\]
This statement is purely universal and thus immediately admissible in the context of our approach to bound extraction theorems as before. The other property of $\PP$ that we then axiomatically add is that of the monotonicity of $\PP$, i.e.
\[
\forall A^S,B^S\left( A\subseteq_S B\rightarrow \PP(A)\leq_\mathbb{R}\PP(B)\right).
\]
Similar to above, this statement is equivalent to the following (generalized) $\Pi_3$-statement
\[
\forall A^S,B^S\exists x^\Omega\left(\PP(A)>_\mathbb{R}\PP(B)\to x\in A\land x\not\in B\right)\tag*{$(\PP)_4$}
\]
which in the context of the previously sketched extended notion of majorizability will later have a trivial monotone functional interpretation and thus be admissible in the context of our approach to proof mining metatheorems.\\

Adding these statements as axioms to the underlying system for algebras of sets, we derive the following system for probability contents on such algebras:

\begin{definition}
We write $\mathcal{F}^\omega[\PP]$ for the system resulting from $\mathcal{F}^\omega$ by adding the above constant $\PP$ together with the axioms $(\PP)_1$ -- $(\PP)_4$. 
\end{definition}

We now begin with some immediate properties of $\PP$ provable in the above system.

\begin{proposition}
\label{prop:properties_of_P}
The following properties of $\PP$ are provable in $\mathcal{F}^\omega[\PP]$:
\begin{enumerate}
\item $\PP$ is extensional w.r.t.\ $=_S$ and $=_\mathbb{R}$, i.e.
\[
\forall A^S,B^S\left( A=_SB\rightarrow \PP(A)=_\mathbb{R}\PP(B)\right).
\]
\item $\PP$ is definite on $\emptyset$, i.e.
\[
\forall A^S\left(\PP(A)>_\mathbb{R}0\rightarrow A\neq_S\emptyset\right).
\]
\item $\PP$ is additive, i.e.
\[
\forall A^S, B^S(A\cap B=_S\emptyset\rightarrow \PP(A \cup B) =_\mathbb{R} \PP(A) + \PP(B)).
\]
\item $\PP$ respects the relative complements of subsets, i.e.
\[
\forall A^S, B^S(B \subseteq_S A \rightarrow \PP(A \cap B^c) =_\mathbb{R} \PP(A) - \PP(B)).
\]
In particular, we also have
\[
\forall A^S (\PP(A^c) =_\mathbb{R} 1 - \PP(A)).
\]
\item $\PP$ satisfies Boole's inequality, i.e.
\[
\forall A^{S(0)}, n^0\,\left(\PP\left(\bigcup^n_{i=0} A(i)\right) \le_\RR \sum_{i=0}^n \PP(A(i))\right).
\]
\end{enumerate}
\end{proposition}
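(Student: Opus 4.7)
The plan is to derive items (1)--(3) from $(\PP)_2$--$(\PP)_4$ together with the Boolean algebra identities of Proposition \ref{pro:basic_prop_fields}, since once monotonicity is available in its standard form, the remaining arithmetic is essentially routine. For extensionality (1), I would first show that the $\Pi_3$-form $(\PP)_4$ is classically equivalent to the standard monotonicity statement $A \subseteq_S B \to \PP(A) \le_\mathbb{R} \PP(B)$: this is simply its contrapositive, recalling that $\neg(A\subseteq_S B)$ unfolds to $\exists x^\Omega(x\in A\land x\not\in B)$ and that $\Omega$ is provably non-empty (witnessed by $c_\Omega$), so the reshuffling of the existential quantifier is unproblematic. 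Applying this in both directions to $A=_SB$ yields $\PP(A)=_\mathbb{R}\PP(B)$. Definiteness (2) is then immediate as the contrapositive of (1) combined with $(\PP)_2$, and additivity (3) follows directly from $(\PP)_3$ after using (1) to rewrite $\PP(A\cap B)$ as $\PP(\emptyset)=_\mathbb{R}0$ under the hypothesis $A\cap B=_S\emptyset$.

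For the relative complement identity (4), I would derive the Boolean identities $A=_S(A\cap B^c)\cup B$ and $(A\cap B^c)\cap B=_S\emptyset$ (valid whenever $B\subseteq_S A$) from Proposition \ref{pro:basic_prop_fields}, then apply extensionality (1) and additivity (3) to obtain $\PP(A)=_\mathbb{R}\PP(A\cap B^c)+\PP(B)$, which rearranges to the claim. The special case $\PP(A^c)=_\mathbb{R}1-\PP(A)$ then follows by specialising $A\mapsto \Omega:=\emptyset^c$ and $B\mapsto A$, using $\Omega\cap A^c=_S A^c$ together with the condition $\PP(\Omega)=_\mathbb{R}1$ inherent to $\PP$ being a probability content.

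Finally, Boole's inequality (5) is handled by induction on $n$, using the primitive-recursive unfolding $\bigcup_{i=0}^{n+1}A(i)=_S\bigcup_{i=0}^{n} A(i)\cup A(n+1)$. The base case $n=0$ is trivial. In the inductive step I would apply $(\PP)_3$ to $\bigcup_{i=0}^n A(i)$ and $A(n+1)$, drop the subtracted intersection term by non-negativity from $(\PP)_1$, and then appeal to the induction hypothesis. The main subtle point throughout is the passage from the $\Pi_3$-axiom $(\PP)_4$ to a workable monotonicity principle, and ensuring that each substitution of an $=_S$-equality inside an application of $\PP$ is legitimate; since $\mathcal{F}^\omega[\PP]$ is only weakly extensional, such rewrites are not free but must be discharged either via the extensionality of $\PP$ established in (1) or via $\QFER$ applied to the algebraic operations (whose extensionality is supplied by Proposition \ref{pro:basic_prop_fields}).
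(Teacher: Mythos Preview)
Your proposal is correct and follows essentially the same approach as the paper's proof: extensionality via the contrapositive of $(\PP)_4$, additivity via $(\PP)_3$ combined with extensionality, the relative-complement identity via a disjoint decomposition of $A$, and Boole's inequality by induction using $(\PP)_3$ and $(\PP)_1$. The only cosmetic differences are that the paper derives (2) directly from $(\PP)_4$ (rather than via (1)) and, for (4), decomposes $A$ as $(A\cap B)\cup(A\cap B^c)$ before invoking $A\cap B=_S B$, whereas you fold that step in; your closing remark about justifying $=_S$-rewrites inside $\PP$ via item (1) rather than raw extensionality is a point the paper leaves implicit.
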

\begin{proof}
\begin{enumerate}
\item Assume $\PP(A)>\PP(B)$. By axiom $(\PP)_4$, there exists an $x$ such that $x\in A$ and $x\not\in B$, i.e.\ $A\neq B$. Similarly we derive $A\neq B$ from $\PP(A)<\PP(B)$. Combined, we get that $A= B$ implies $\PP(A)=\PP(B).$
\item Assume $\PP(A)>0=\PP(\emptyset)$. Then by axiom $(\PP)_4$, we get an $x\in A$, i.e.\ $A\neq \emptyset$.
\item Let $A,B$ be arbitrary with $A\cap B=\emptyset$. By axiom $(\PP)_3$, we have $\PP(A\cup B)=\PP(A)+\PP(B)-\PP(A\cap B)$. As $\PP$ is extensional, we get $\PP(A\cap B)=\PP(\emptyset)=0$ so that the above implies $\PP(A\cup B)=\PP(A)+\PP(B)$ as desired.
\item Let $E := A \cap B$ and $F:= A \cap B^c$. Then $E \cap F = \emptyset$ (by the properties of algebras of sets). Thus, by additivity, $\PP(E \cup F) = \PP(E) + \PP(F)$. We have that $E \cup F = A$ (again by the properties of algebras of sets). Thus, by extensionality of $\PP$, we have $\PP(A) = \PP(A \cap B) + \PP(A \cap B^c)$. Now, $B \subseteq A$ implies $A \cap B = B$ by Proposition \ref{pro:basic_prop_fields} and so the result follows from the extensionality of $\PP$. 
\item This follows via a simple induction from the axiom $(\PP)_3$.
\end{enumerate}
\end{proof}

Contents on algebras enjoy certain continuity properties similar to continuity from above and below  for measures but without the existence of limiting sets, i.e.\ infinite unions, etc. (see e.g.\ \cite{RR1983}) and we now discuss how the system $\mathcal{F}^\omega[\PP]$ recognizes Cauchy-variants of these properties.\\

For that, we introduce the following operation on terms of type $S(0)$ that allows for the implicit quantification over a disjoint countable family of sets: given $A^{S(0)}$, we set $(A\!\uparrow)(0)=A(0)$ and
\[
(A\!\uparrow)(n+1):=A(n+1)\cap \left(\bigcup^n_{i=0} A(i)\right)^c.
\]
This operation thus turns $A$ into a sequence of disjoint sets $A\!\uparrow$ with the same (partial) union(s) and if $A$ was already a disjoint family, then it is left unchanged by the operation.\\

We now begin with a Cauchy-type form of $\sigma$-additivity of $\PP$ as a content. For this, note that for a given $A^{S(0)}$, the sequence of partial sums  
\[
\sum^n_{i=0}\PP((A\!\uparrow)(i))=\PP\left(\bigcup_{i=0}^n(A\!\uparrow)(i)\right)=\PP\left(\bigcup_{i=0}^nA(i)\right)
\]
is a monotone and bounded sequence of real numbers and thus is Cauchy. The following result that (already a weak fragment of) $\WEPAomega$ suffices to prove the Cauchy formulation of the convergence of monotone and bounded sequences is well known:
\begin{lemma}[folklore, see essentially \cite{Koh2008}]
The system $\WEPAomega$ (and actually already a weak fragment thereof) proves that 
\begin{align*}
&\forall a^{1(0)}\big(\forall n^0\left(0\le_\mathbb{R} a(n)\le_\mathbb{R} 1\land a(n) \le_\mathbb{R} a(n+1)\right)\\
&\qquad\to \forall k^0\exists N^0\forall n^0,m^0\ge_0 N\left(\vert a(n)-a(m)\vert <_\mathbb{R} 2^{-k}\right)\big).
\end{align*}
\end{lemma}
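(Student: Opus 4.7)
The plan is to prove the statement classically by contradiction, relying only on $\Sigma^0_1$-induction over a quantifier-free matrix, a principle available not only in $\WEPAomega$ but also in the weak subsystems alluded to in the statement.

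Fix $a$ satisfying the monotonicity and boundedness hypothesis, and fix $k$. Suppose for contradiction that the Cauchy condition fails, i.e.\ $\forall N \exists n, m \geq_0 N (|a(n) - a(m)| \geq_\mathbb{R} 2^{-k})$. Since this $\geq_\mathbb{R}$-relation is only $\Pi^0_1$ in the Cauchy representation, I would first descend to a decidable rational form: using the standard estimate that $[a(n)](k+2)$ is $2^{-(k+2)}$-close to the real $a(n)$, the assumption $|a(n) - a(m)| \geq_\mathbb{R} 2^{-k}$ entails $|[a(n)](k+2) -_\mathbb{Q} [a(m)](k+2)|_\mathbb{Q} \geq_\mathbb{Q} 2^{-k-1}$, which is decidable. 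Combining this with monotonicity --- so that without loss of generality $n \geq m$ and $a(n) - a(N) \geq a(n) - a(m)$ --- the failure hypothesis is transformed into the purely quantifier-free statement: for every $N$ there exists $M \geq_0 N$ with $[a(M)](k+2) -_\mathbb{Q} [a(N)](k+2) \geq_\mathbb{Q} 2^{-k-1}$.

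Next I would prove by $\Sigma^0_1$-induction on $i$ that $\phi(i) := \exists N\,([a(N)](k+2) \geq_\mathbb{Q} i \cdot 2^{-k-1} -_\mathbb{Q} 2^{-k-1})$, where the base case is witnessed by $N = 0$ (absorbing the offset coming from $a(0) \geq_\mathbb{R} 0$ into the constant slack), and the inductive step follows by applying the rationalised failure to the witness for $\phi(i)$. Choosing $i$ large enough --- explicitly $i = 2^{k+1} + 3$ suffices --- then produces an $N$ with $[a(N)](k+2) >_\mathbb{Q} 1 +_\mathbb{Q} 2^{-(k+2)}$, whence $a(N) >_\mathbb{R} 1$ via the same standard estimate, contradicting $a(N) \leq_\mathbb{R} 1$.

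The main obstacle is the careful bookkeeping of the translation between the real inequalities and their decidable rational approximations: the naive formulation of the induction would be on a $\Sigma^0_2$-formula (with an outer existential over $N$ and an inner $\Pi^0_1$-formula for the real inequality), whereas rationalising at precision $k+2$ drops the matrix back to quantifier-free and keeps the argument within $\Sigma^0_1$-induction, firmly inside the intended proof-theoretic strength. Everything else --- monotonicity, the arithmetic on the counter $i$, and the bound $i \cdot 2^{-k-1} > 1$ --- is entirely elementary.
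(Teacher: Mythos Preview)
The paper does not supply its own proof of this lemma; it is stated as folklore with a pointer to \cite{Koh2008}. Your argument is correct and is exactly the standard one found there: rationalise at a fixed precision to reduce the $\Pi^0_1$ real-number inequalities to decidable rational ones, then run $\Sigma^0_1$-induction on a counter to force the rational approximations past $1$, contradicting the upper bound. The bookkeeping you flag --- keeping the induction formula genuinely $\Sigma^0_1$ rather than $\Sigma^0_2$ --- is precisely the point that makes the argument go through in the weak fragments alluded to in the statement.
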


So, instantiating the above result with $a(n)=\sum^n_{i=0}\PP((A\!\uparrow)(i))$, we can derive that $\mathcal{F}^\omega[\PP]$ (and actually already a weak fragment thereof) can prove the Cauchy-property of sequences of contents of increasing disjoint unions:

\begin{proposition}\label{pro:meta_countable_convergence}
The system $\mathcal{F}^\omega[\PP]$ (and actually already a weak fragment thereof) proves
\[
\forall A^{S(0)}\forall k^0\exists N^0\forall n^0,m^0\ge_0 N\left( \left\vert \sum_{i=0}^n\PP((A\!\uparrow)(i)) - \sum_{i=0}^m\PP((A\!\uparrow)(i))\right\vert<_\mathbb{R} 2^{-k}\right).
\]
\end{proposition}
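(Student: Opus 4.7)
The plan is to reduce the claim to a direct application of the preceding monotone-convergence lemma. Define the sequence of reals $a(n) := \sum_{i=0}^n \PP((A\!\uparrow)(i))$, which is primitive recursively definable (using a type $1$ recursor on the base sequence $\PP \circ (A\!\uparrow)$). It suffices to verify, provably in $\mathcal{F}^\omega[\PP]$, that $0 \le_\mathbb{R} a(n) \le_\mathbb{R} 1$ and $a(n) \le_\mathbb{R} a(n+1)$ for all $n^0$; the Cauchy conclusion is then the direct instantiation of the lemma with this $a$.

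Monotonicity is immediate since $a(n+1) = a(n) + \PP((A\!\uparrow)(n+1))$ and the added summand is nonnegative by axiom $(\PP)_1$. For boundedness, the key step is to prove the telescoping identity
\[
a(n) =_\mathbb{R} \PP\left(\bigcup_{i=0}^n (A\!\uparrow)(i)\right),
\]
from which $a(n) \le_\mathbb{R} 1$ follows by a further application of $(\PP)_1$. I would establish this identity by induction on $n$: the base case reduces to the definition of $A\!\uparrow$ at $0$, and the inductive step invokes the additivity of $\PP$ (Proposition \ref{prop:properties_of_P}(3)) after verifying the disjointness
\[
(A\!\uparrow)(n+1) \cap \bigcup_{i=0}^n (A\!\uparrow)(i) =_S \emptyset.
\]
This latter disjointness follows from the defining equation $(A\!\uparrow)(n+1) = A(n+1) \cap \left(\bigcup_{i=0}^n A(i)\right)^c$ together with an auxiliary induction showing $\bigcup_{i=0}^n (A\!\uparrow)(i) \subseteq_S \bigcup_{i=0}^n A(i)$, followed by Boolean-algebra manipulations licensed by Proposition \ref{pro:basic_prop_fields}.

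The principal obstacle, though minor, is bookkeeping around the extensionality of $\PP$: since $=_S$ is a defined notion rather than a primitive relation, each application of $(\PP)_3$ inside the inductive step requires replacing a set by a provably $=_S$-equal one under $\PP$, which must be justified via Proposition \ref{prop:properties_of_P}(1). Once these replacements are threaded through carefully, the verification of the hypotheses of the cited lemma is routine and the proposition follows. Finally, inspection of the argument shows that nothing beyond quantifier-free induction on the sequence $a$ and on the set-theoretic identities has been used, so indeed a weak fragment of $\mathcal{F}^\omega[\PP]$ suffices.
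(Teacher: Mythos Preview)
Your proposal is correct and follows essentially the same route as the paper: define $a(n)=\sum_{i=0}^n\PP((A\!\uparrow)(i))$, observe that this equals $\PP\bigl(\bigcup_{i=0}^n(A\!\uparrow)(i)\bigr)$ and hence lies in $[0,1]$ and is nondecreasing, then instantiate the preceding monotone-convergence lemma. The paper records the telescoping identity as a one-line observation before the lemma, whereas you spell out the inductive verification and the extensionality bookkeeping; this extra care is appropriate but does not constitute a different approach.
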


From this Proposition \ref{pro:meta_countable_convergence}, we can then immediately derive the following continuity theorems for contents.

\begin{proposition}\label{pro:continuity}
The system $\mathcal{F}^\omega[\PP]$ (and actually already a weak fragment thereof) proves:
\begin{enumerate}
\item $\PP$ is continuous from below, i.e.
\[
\forall A^{S(0)}\left(\forall n^0\left(A(n) \subseteq_S A(n+1)\right) \to\forall k^0\exists N^0\forall n^0,m^0\ge_0 N\left( \left\vert \PP(A(n)) - \PP(A(m))\right\vert<_\mathbb{R} 2^{-k}\right)\right).
\]
\item $\PP$ is continuous from above, i.e.
\[
\forall A^{S(0)}\left(\forall n^0\left(A(n+1) \subseteq_S A(n)\right) \to\forall k^0\exists N^0\forall n^0,m^0\ge_0 N\left( \left\vert \PP(A(n)) - \PP(A(m))\right\vert<_\mathbb{R} 2^{-k}\right)\right).
\]
\end{enumerate}  
\end{proposition}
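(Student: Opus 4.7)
The plan is to reduce both statements to Proposition \ref{pro:meta_countable_convergence} by showing that under the appropriate monotonicity hypothesis on $A^{S(0)}$, the partial sums $\sum_{i=0}^n\PP((A\!\uparrow)(i))$ coincide (up to $=_\RR$) with $\PP(A(n))$, so that the Cauchy statement for the former transfers directly to the latter.

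For item (1), I would first prove by induction on $n$ that if $A(k)\subseteq_S A(k+1)$ for every $k$, then $\bigcup_{i=0}^n A(i) =_S A(n)$: the base case is immediate from the definition of $\bigcup_{i=0}^0$, and the induction step uses the equivalence $A(n)\subseteq_S A(n+1)\leftrightarrow A(n+1)=_S A(n)\cup A(n+1)$ from Proposition \ref{pro:basic_prop_fields} together with the extensionality of $\cup$ to pass from $\bigcup_{i=0}^n A(i) =_S A(n)$ to $\bigcup_{i=0}^{n+1} A(i) =_S A(n)\cup A(n+1) =_S A(n+1)$. Next I would establish, again by a straightforward induction using the defining equations of $A\!\uparrow$ and the extensionality of $\cup$, that $\bigcup_{i=0}^n(A\!\uparrow)(i) =_S \bigcup_{i=0}^n A(i)$ (which was already remarked in the prose introducing $A\!\uparrow$). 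Combining these two identifications with iterated applications of additivity on disjoint sets (Proposition \ref{prop:properties_of_P}(3), legitimate because $A\!\uparrow$ is pairwise disjoint by construction) and extensionality of $\PP$ (Proposition \ref{prop:properties_of_P}(1)) yields
\[
\sum_{i=0}^n\PP((A\!\uparrow)(i)) =_\RR \PP\left(\bigcup_{i=0}^n(A\!\uparrow)(i)\right) =_\RR \PP\left(\bigcup_{i=0}^n A(i)\right) =_\RR \PP(A(n)),
\]
so the Cauchy statement supplied by Proposition \ref{pro:meta_countable_convergence} becomes exactly the desired statement about $\PP(A(n))$.

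For item (2), I would reduce to item (1) by passing to complements. Setting $B(n):=A(n)^c$, the assumption $A(n+1)\subseteq_S A(n)$ translates to $B(n)\subseteq_S B(n+1)$ via the Boolean-algebra identities available in $\mathcal{F}^\omega$ (Proposition \ref{pro:basic_prop_fields}), and Proposition \ref{prop:properties_of_P}(4) gives $\PP(B(n)) =_\RR 1-\PP(A(n))$, hence $|\PP(A(n))-\PP(A(m))| =_\RR |\PP(B(n))-\PP(B(m))|$. Applying item (1) to the increasing sequence $B$ then yields the conclusion.

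The derivation is essentially routine once the two bookkeeping inductions above are in place. The only subtle point, and the main obstacle to a slick write-up, is that all intermediate equalities live at the level of the defined equality $=_S$ rather than being term-level identities, so every rewriting step must be justified via the explicit extensionality properties of $\cup$, $(\cdot)^c$, and $\PP$ packaged in Propositions \ref{pro:basic_prop_fields} and \ref{prop:properties_of_P} — one cannot appeal to full extensionality, since $\WEPAomega$ provides only the quantifier-free rule $\QFER$. Because those propositions already do the heavy lifting, the whole argument proceeds entirely through rewriting of the quantifier-free matrix of the target statement, which is consistent with the authors' remark that a weak fragment of $\mathcal{F}^\omega[\PP]$ already suffices.
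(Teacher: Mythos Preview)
Your proposal is correct and follows essentially the same strategy as the paper: reduce (1) to Proposition~\ref{pro:meta_countable_convergence} by identifying $\sum_{i=0}^n\PP((A\!\uparrow)(i))$ with $\PP(A(n))$, and derive (2) from (1) via complements and $\PP(A^c)=1-\PP(A)$. The only cosmetic difference is that the paper obtains the key identity in (1) by a telescoping sum---using $(A\!\uparrow)(n+1)=_S A(n+1)\cap A(n)^c$ together with Proposition~\ref{prop:properties_of_P}(4) to get $\PP((A\!\uparrow)(n+1))=\PP(A(n+1))-\PP(A(n))$---whereas you argue at the set level that $\bigcup_{i=0}^n A(i)=_S A(n)$ and then apply additivity; both routes are routine.
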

\begin{proof}
\begin{enumerate}
\item Note that $A(n) \subseteq A(n+1)$ for all $n$ implies that 
\[
(A\!\uparrow)(n+1) = A(n+1)\cap A(n)^c
\]
for any $n$. Thus by Proposition \ref{prop:properties_of_P}, (4), we have 
\[
\PP((A\!\uparrow)(n+1)) = \PP(A(n+1))- \PP(A(n))
\]
for all $n$. Thus we have
\[
\sum_{i=0}^n\PP((A\!\uparrow)(i))=\PP(A(0))+\sum_{i=0}^{n-1}\left(\PP(A(i+1))-\PP(A(i))\right)=\PP(A(n))
\]
for any $n$. The result now follows from Proposition \ref{pro:meta_countable_convergence}. 
\item Observe that $A(n+1) \subseteq A(n)$ for any $n$ implies that $A(n)^c \subseteq A(n+1)^c$ for all $n$. Thus, by $(1)$, we have 
\[
\forall k^0\exists N^0\forall n^0,m^0\ge_0 N\left( \left\vert \PP(A(n)^c) - \PP(A(m)^c)\right\vert<_\mathbb{R} 2^{-k}\right).
\]
By Proposition \ref{prop:properties_of_P}, (4), we get $\PP(A(n)^c)=1-\PP(A)$ and from this the result follows.
\end{enumerate}
\end{proof}

\section{Systems for $\sigma$-algebras and probability measures}

If we now further require the closure of the underlying algebra of sets under countable unions, we arrive at the notion of a $\sigma$-algebra which forms the algebraic basis for probability measures.

\begin{definition}[$\sigma$-algebra]
Let $\Omega$ be a set and $S\subseteq 2^\Omega$ be an algebra. Then $S$ is called a $\sigma$-algebra if for any $(A_n)\subseteq S$, it also holds that $\bigcup_{n=0}^\infty A_n\in S$.
\end{definition}

The requirement that a content on a $\sigma$-algebra is also well-behaved w.r.t.\ these countable unions then leads to the notion of a measure on such an algebra.

\begin{definition}[Measure]
Let $\Omega$ be a set and $S\subseteq 2^\Omega$ be a $\sigma$-algebra. A measure on $S$ is a content $\mu:S\to [0,\infty]$ that is also $\sigma$-additive, i.e.
\[
\mu\left(\bigcup_{n=0}^\infty A_n\right)=\sum_{n=0}^\infty\mu(A_n)
\]
for any $(A_n)\subseteq S$ with $A_i\cap A_j=\emptyset$ for $i\neq j$.

The map $\mu$ is called a probability measure if $\mu(\Omega)=1$. In that case, $(\Omega,S,\mu)$ is called a probability space.
\end{definition}

In this section, we will now discuss how the previous system for algebras $\mathcal{F}^\omega$ and its extension $\mathcal{F}^\omega[\PP]$ for treating probability contents can be augmented by a certain intensional treatment of countably infinite unions to provide an apt and tame formal basis for these notions.

\subsection{Treating infinite unions tamely}

Concretely, to treat countably infinite unions over algebras of sets tamely, we now extend the previous system $\mathcal{F}^\omega$ with the following further constant
\begin{itemize}
\item $\bigcup$ of type $S(S(0))$,
\end{itemize}
providing a term of type $S$ for the resulting union of the sequence of sets coded by the input of type $S(0)$. So, in the context of suitable axioms specifying that $\bigcup A$ for a given $A^{S(0)}$ represents the union of all $A(n)$, we can formally induce that the algebra $S$ is closed under these countable unions. The immediate axioms specifying the property that $\bigcup A$ is the corresponding union are
\[
\forall A^{S(0)}\forall n^0\left( A(n)\subseteq_S \bigcup A\right) \tag*{$(\bigcup)_1$}
\]
as well as
\[
\forall A^{S(0)}\forall B^S\left( \forall n^0 \left( A(n)\subseteq_S B\right)\to \bigcup A\subseteq_S B\right),
\]
specifying that $\bigcup$ is the join of the elements $A(n)$ in $S$ (seen as a lattice). The first statement $(\bigcup)_1$ is immediately admissible in the context of the Dialectica interpretation as it is purely universal. The latter statement is naturally not admissible as is in the context of the usual approach to proof mining metatheorems as it contains a negative universal quantifier of type $0$ that can not be majorized (which, after all, is also why a uniform variant of arithmetical comprehension is necessary to fully define countable unions of sets, see e.g.\ \cite{Kre2015} for further discussions).\\

Since we want to avoid this strong form of comprehension as to not in general distort the strength of the extracted bounds to be able to a priori guarantee the extractability of proof-theoretically tame bounds from proofs, we opt for the next best thing we can do and instead specify the union only intensionally by adding the following rule-variant of the above converse implication
\[
\frac{F_{qf}\to \forall n^0\left( A(n)\subseteq_SB\right)}{F_{qf}\to \bigcup A\subseteq_S B}\tag*{$(\bigcup)_2$}
\]
where $A$ is a term of type $S(0)$, $B$ is a term of type $S$ and $F_{qf}$ is a quantifier-free formula. So: If $A(n)$ is provably bounded above by $B$ w.r.t.\ $\subseteq_S$ under some quantifier-free assumptions, then also $\bigcup A$ is probably bounded above by $B$ w.r.t.\ $\subseteq_S$ under the same assumptions.

\begin{definition}
We write $\mathcal{F}^\omega[\bigcup]$ for the system resulting from $\mathcal{F}^\omega$ by extending it with the constant $\bigcup$ together with the axiom $(\bigcup)_1$ and the rule $(\bigcup)_2$. Similarly, we write $\mathcal{F}^\omega[\bigcup,\PP]$ for the system that arises from $\mathcal{F}^\omega[\PP]$ by adding the same constants, axioms and rules.
\end{definition}

Using this intensional approach to countable unions, we can also immediately provide an intensional treatment of countable intersections. For this, we first define $A^c$ for an $A^{S(0)}$ by setting $A^c(n):= A(n)^c$ for any $n^0$. Using this notation, we then define the countable intersection of a collection of sets represented by an $A^{S(0)}$ via
\[
\bigcap A := \left(\bigcup A^c\right)^c.
\]

We then get that analogs of the axiom $(\bigcup)_1$ and the rule $(\bigcup)_2$, formulated appropriately for the intersection, are provable in our system $\mathcal{F}^\omega[\bigcup]$:

\begin{lemma}\label{lem:intersection}
The following statement is provable in $\mathcal{F}^\omega[\bigcup]$:
\[
\forall A^{S(0)}\forall n^0\left( \bigcap A\subseteq_S A(n)\right).
\]
Further, in $\mathcal{F}^\omega[\bigcup]$ the following rule is derivable:
\[
\frac{F_{qf}\to \forall n^0\left( B\subseteq_S A(n)\right)}{F_{qf}\to B\subseteq_S \bigcap A}.
\]
\end{lemma}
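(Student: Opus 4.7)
My plan is to exploit the De Morgan duality baked into the definition $\bigcap A := (\bigcup A^c)^c$, so that both the axiom and the rule for $\bigcap$ reduce, respectively, to the axiom $(\bigcup)_1$ and the rule $(\bigcup)_2$ applied to the complemented sequence $A^c$. The logical glue for the reduction is the equivalence
\[
C \subseteq_S D \;\leftrightarrow\; D^c \subseteq_S C^c
\]
for terms $C^S, D^S$, together with the involution $(E^c)^c =_S E$. Both are immediate in $\mathcal{F}^\omega$ from axiom $(\in)_4$ (via classical contraposition on the unfolded definition of $\subseteq_S$) and are in any case among the Boolean identities noted in Proposition \ref{pro:basic_prop_fields}. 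Combined with the extensionality of $\subseteq_S$ under $=_S$ (trivial from its definition and the extensionality of $\in$ in its set argument), these give the purely propositional machinery I will need.

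\textbf{The axiom.} Fix $A^{S(0)}$ and $n^0$. Instantiating $(\bigcup)_1$ at the sequence $A^c$ and at $n$ gives
\[
A(n)^c \,=\, A^c(n) \,\subseteq_S\, \bigcup A^c.
\]
Applying the complementation equivalence above, this is the same as $(\bigcup A^c)^c \subseteq_S (A(n)^c)^c$, and then collapsing the double complement via $(A(n)^c)^c =_S A(n)$ and the extensionality of $\subseteq_S$ yields $\bigcap A \subseteq_S A(n)$, as required.

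\textbf{The rule.} Suppose $\mathcal{F}^\omega[\bigcup]$ derives $F_{qf} \to \forall n^0 (B \subseteq_S A(n))$. Since the auxiliary equivalence $C \subseteq_S D \leftrightarrow D^c \subseteq_S C^c$ is provable in $\mathcal{F}^\omega$ as a purely propositional transformation of the unfolded definition of $\subseteq_S$ (using $(\in)_4$ to move between $x \in E$ and $x \in E^c$), I can rewrite the consequent inside the universally quantified matrix, without touching $F_{qf}$, to obtain the derivable implication
\[
F_{qf} \to \forall n^0\left( A^c(n) \subseteq_S B^c\right).
\]
Here $F_{qf}$ is still quantifier-free and $B^c$ is a term of type $S$, so the rule $(\bigcup)_2$ applies with the sequence $A^c$ and bounding set $B^c$, delivering
\[
F_{qf} \to \bigcup A^c \subseteq_S B^c.
\]
One last appeal to the complementation equivalence, together with $(B^c)^c =_S B$ and the extensionality of $\subseteq_S$, turns this into $F_{qf} \to B \subseteq_S (\bigcup A^c)^c$, i.e., $F_{qf} \to B \subseteq_S \bigcap A$.

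\textbf{Main obstacle.} The argument is essentially routine; the only point requiring care is that the contraposition step taking $\forall n (B \subseteq_S A(n))$ to $\forall n (A^c(n) \subseteq_S B^c)$ must be executed inside the matrix of the universal quantifier and inside the implication with antecedent $F_{qf}$, so that neither a new quantifier nor a new free set variable is introduced into $F_{qf}$. Since the rewrite is purely propositional and local to the matrix, the quantifier-free side condition of $(\bigcup)_2$ is preserved and the rule application at the end is legal.
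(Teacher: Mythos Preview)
Your proof is correct and follows essentially the same approach as the paper: both arguments reduce the intersection statements to the corresponding union axiom and rule via the De Morgan duality built into the definition $\bigcap A := (\bigcup A^c)^c$, contraposing $\subseteq_S$ along the way. The only cosmetic difference is that the paper argues the first part element-wise (taking $x\in\bigcap A$, unfolding to $x\notin\bigcup A^c$, and contraposing $(\bigcup)_1$), whereas you package the same contraposition at the level of $\subseteq_S$; for the rule part the two arguments are identical.
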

\begin{proof}
For the provability of the first statement, let $x\in\bigcap A$. By definition we have $x\not\in \bigcup A^c$. Then by axiom $(\bigcup)_1$, we get $x\not\in A(n)^c$, i.e.\ $x\in A(n)$ for any $n$.\\

Now, for the rule, suppose that we provably have $F_{qf}\to\forall n\left( B\subseteq A(n)\right)$. Then we also provably have $F_{qf}\to \forall n(A(n)^c\subseteq B^c)$ and using the rule $(\bigcup)_2$, we get $F_{qf}\to \bigcup A^c\subseteq B^c$. Thus also $F_{qf}\to B\subseteq\left(\bigcup A^c\right)^c=\bigcap A$ as desired.
\end{proof}

\subsection{Handling probability measures}

As we have seen in Propositions \ref{pro:meta_countable_convergence} and \ref{pro:continuity}, the system $\mathcal{F}^\omega[\PP]$ already provides Cauchy-variants of the convergence of monotone sequences of events as well as of sums of disjoint events. In the theory of measures on $\sigma$-algebras, the resulting limits of course correspond to the measure of respective infinite unions or intersections. Thus, the natural question of whether and how this can be formally represented in the system $\mathcal{F}^\omega[\bigcup,\PP]$ immediately arises. And while for a disjoint family represented by $A^{S(0)}$, the limit
\[
0\leq \PP\left(\bigcup A\right)-\PP\left(\bigcup_{i=0}^nA(i)\right)\to 0\text{ for }n\to\infty
\]
holds true, one in general does not have a computable rate of convergence for this expression in the sense that even a function $\varphi$ of type $1$ such that
\[
\forall k^0\exists n\leq_0 \varphi(k)\left(\PP\left(\bigcup A\right)-\PP\left(\bigcup_{i=0}^nA(i)\right)\leq_\mathbb{R} 2^{-k}\right)\tag{$\circ$}
\]
is in general not computable (see Remark \ref{rem:noRate} for an example). Nevertheless, we want to point out that while therefore the convergence of the sequence $\sum_{i=0}^n\PP(A(i))$ towards $\PP(\bigcup A)$ can not be provable in any system that allows for the extraction of computable and uniform bounds, the system $\mathcal{F}^\omega[\bigcup,\PP]$ nevertheless provides an intensional version of that convergence in the following sense:
\begin{enumerate}
\item By Proposition \ref{pro:meta_countable_convergence}, the sequence of partial sums $\sum_{i=0}^n\PP(A(i))$ is provably Cauchy.
\item Using the additivity and monotonicity of $\PP$, i.e.\ axioms $(\PP)_3$ and $(\PP)_4$, we get by $\bigcup_{i=0}^nA(i)\subseteq_S \bigcup A$ that
\[
\sum_{i=0}^n\PP(A(i))=_\mathbb{R}\PP\left(\bigcup_{i=0}^n A(i)\right)\leq_\mathbb{R} \PP\left(\bigcup A\right)
\]
holds provably.
\item For any object $B^S$ such that we provably have $\forall n^0\left(A(n)\subseteq_S B\right)$, we get $\bigcup A\subseteq_S B$ using the rule $(\bigcup)_2$ and so we get provably
\[
\PP\left(\bigcup A\right)\leq_\mathbb{R} \PP(B)
\]
in that case by monotonicity of $\PP$.
\end{enumerate}
So the value $\PP(\bigcup A)$ is at least intensionally specified to be the limit of the partial sums $\sum_{i=0}^n\PP(A(i))$ as $\PP(\bigcup A)$ is bounded below by this nondecreasing sequence of partial sums and intensionally bounded above by the probability of any set which provably sits above the given partial unions $\bigcup_{i=0}^nA(i)$.\\

The case that we want to make is now twofold: For one, as mentioned in the introduction, the theory of contents already exhaust large parts of the theory of measures in the sense that often already the properties of contents on algebras suffice to carry out proofs for properties of measures on $\sigma$-algebras (as will also be the case in the applications discussed later). For another, we want to argue that even in situations where one can not do just with finite unions and content, such an intensional specification of countable unions and their measures might suffice for formalizing a given proof and all the while guaranteeing the extractability of tame bounds bounds a priori. If that is not the case, then the result under consideration might be considered to be inherently ``untame'' and a full treatment of the comprehension principle needed to define the respective unions will be necessary. We therefore regard $\mathcal{F}^\omega[\bigcup,\PP]$ as a suitable tame base system for treating probability measures on $\sigma$-algebras.

\begin{remark}\label{rem:noRate}
For an example where $\varphi$ in ($\circ$) is not computable, we take $(r_i)\subseteq (0,1)$ to be a sequence of computable real numbers such that 
\[
a_n=\sum_{i=0}^nr_i\to a \le 1 
\]
without a computable rate of convergence.\footnote{The existence of such a sequence is due to \cite{Spe1949}. For a direct construction, we proceed similar as in \cite{TvD1988}: pick an enumeration $f:\mathbb{N}\to\mathbb{N}$ of the special Halting problem without repetitions. Then defining $r_i=2^{-f(i)-1}$ yields a suitable sequence such that $a_n$ defined as above naturally converges to an element $a\in [0,1]$ as it is monotone and bounded above but the rate of convergence can not be computable as this would allow one to decide the special Halting Problem.} We now define $\Omega=\mathbb{N}\cup\{\infty\}$ as well as $S=2^\Omega$. On the discrete sample space $\Omega$, we then define a probability mass function $p$ via $p(i)=r_{i}$ for $i\in\mathbb{N}$ as well as $p(\infty)=1-a$. This $p$ then as usual induces a probability measure $\PP$ on the $\sigma$-algebra $S$ defined for $A\subseteq \Omega$ via
\[
\PP(A)=\sum_{a\in A}p(a).
\]
Clearly $(\Omega,S,\PP)$ is a probability space where
\[
\PP(\mathbb{N})-\sum_{i=1}^{n+1}\PP(\{i\})=a-\sum_{i=0}^nr_i=a-a_n
\]
can not have a computable rate of convergence to $0$.
\end{remark}

\section{Intensional intervals, inverse mappings and measurable functions}\label{sec:measurable}

In this section, we now extend the machinery of the previous logical systems so that we are able to deal with functions $f:\Omega\to\mathbb{R}$ that are measurable in certain suitable sense relative to algebras. As such, the treatment given here will be instrumental for both our approach to integrable functions in Section \ref{sec:integral} as well as to proof-theoretic transfer principles for implications between modes of convergence in Section \ref{sec:transfer}. For this, we now first recall the essential definitions and basic results.

\begin{definition}[Borel $\sigma$-algebra]
Let $Y$ be a topological space. The Borel $\sigma$-algebra $\mathcal{B}(Y)$ on $Y$ is the smallest $\sigma$-algebra on $Y$ that contains all open subsets of $Y$.
\end{definition}

We refer to \cite{Hal1974} as a standard reference for Borel $\sigma$-algebras in particular and measure theory in general (in particular regarding to the well-definedness of the above definition for which one needs to see that the intersection of any family of ($\sigma$-)algebras is again a ($\sigma$-)algebra).\\

Crucial for us will be the notion of a generating set of a ($\sigma$-)algebra.

\begin{definition}[Generators of a ($\sigma$-)algebra]
Let $\Omega$ be a set and $S\subseteq 2^\Omega$ be a ($\sigma$-)algebra. A generating set for $S$ is a set $S_0\subseteq 2^\Omega$ such that $S$ is the smallest ($\sigma$-)algebra containing $S_0$.
\end{definition}

In that terminology, the Borel $\sigma$-algebra $\mathcal{B}(Y)$ is the $\sigma$-algebra generated by the open subsets of the underlying topological space.\\

For the special case of the real numbers as a topological space with the usual topology induced by the metric distance, we in particular get the following canonical generator besides the open subsets of $\mathbb{R}$.

\begin{lemma}[folklore, see e.g.\ \cite{Hal1974}]
The Borel $\sigma$-algebra $\mathcal{B}(Y)$ on $\mathbb{R}$ is generated by the collection of all closed intervals $\{[a,b]\mid a,b\in\mathbb{R}\}$.
\end{lemma}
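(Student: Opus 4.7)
The plan is to show the two inclusions $\mathcal{C}\subseteq \mathcal{B}(\mathbb{R})$ and $\mathcal{B}(\mathbb{R})\subseteq \mathcal{C}$ separately, where $\mathcal{C}$ denotes the $\sigma$-algebra generated by the collection $\{[a,b]\mid a,b\in\mathbb{R}\}$ of closed intervals.

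For the first inclusion, note that every closed interval $[a,b]$ is a closed subset of $\mathbb{R}$ in the standard topology, so its complement is open and hence $[a,b]\in \mathcal{B}(\mathbb{R})$ by closure of the Borel $\sigma$-algebra under complementation. Since $\mathcal{B}(\mathbb{R})$ is then a $\sigma$-algebra containing all closed intervals and $\mathcal{C}$ is by definition the smallest such, one gets $\mathcal{C}\subseteq \mathcal{B}(\mathbb{R})$ immediately.

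For the converse inclusion, since $\mathcal{B}(\mathbb{R})$ is by definition generated by the collection of open subsets of $\mathbb{R}$, it suffices to show that every open $U\subseteq \mathbb{R}$ lies in $\mathcal{C}$. Using second countability of $\mathbb{R}$ (concretely: the countable basis formed by open intervals with rational endpoints), any such $U$ can be written as a countable union $U=\bigcup_{i\in\mathbb{N}}(a_i,b_i)$ of open intervals with rational endpoints, so by closure of $\mathcal{C}$ under countable unions it is enough to place each such open interval $(a,b)$ into $\mathcal{C}$. For this one uses the standard decomposition $(a,b)=\bigcup_{n\geq N}[a+1/n,b-1/n]$ for a sufficiently large $N$, writing the open interval as a countable union of closed intervals, with the degenerate cases $a=b$ being trivial. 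Unbounded open intervals can be handled analogously via decompositions like $(-\infty,b)=\bigcup_{n\in\mathbb{N}}[-n,b-1/n]$ and $(a,\infty)=\bigcup_{n\in\mathbb{N}}[a+1/n,n]$.

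The whole argument is standard bookkeeping once the countable-union decomposition is set up; the only mild subtlety is carefully handling the degenerate and unbounded cases, but these present no real obstacle.
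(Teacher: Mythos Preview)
Your argument is correct and is precisely the standard one. Note, however, that the paper does not supply its own proof of this lemma: it is stated as folklore with a reference to Halmos, so there is nothing to compare against. Two minor remarks: the clause about unbounded open intervals is superfluous, since the rational basis already consists of bounded intervals; and the restriction to $n\geq N$ is not strictly needed either, as $[a+1/n,b-1/n]=\emptyset$ for small $n$ and the empty set lies in any $\sigma$-algebra.
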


One then arrives at the notion of a measurable function which we for simplicity only formulate for real valued functions here.

\begin{definition}[Borel-measurable function]
Let $(\Omega,S,\mu)$ be a content space. A function $f:\Omega\to\mathbb{R}$ is called Borel-measurable if
\[
\{x\in \Omega\mid f(x)\in B\}=:f^{-1}(B)\in S
\]
for all $B\in \mathcal{B}(\mathbb{R})$.
\end{definition}

As we will crucially use later on, Borel-measurability is simply characterized by a similar condition on a generating set.

\begin{proposition}[folklore, see e.g.\ \cite{Hal1974}]
Let $(\Omega,S,\mu)$ be a measure space. A function $f:\Omega\to\mathbb{R}$ is Borel-measurable if, and only if, $f^{-1}([a,b])\in S$ for all $a,b\in\mathbb{R}$.
\end{proposition}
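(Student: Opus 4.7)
The forward direction is immediate: every closed interval $[a,b] \subseteq \mathbb{R}$ is closed, hence lies in $\mathcal{B}(\mathbb{R})$ (as the complement of an open set), so Borel-measurability of $f$ directly yields $f^{-1}([a,b]) \in S$. So the substance lies entirely in the converse.

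For the converse direction, the plan is the classical ``good sets'' argument. I would define the collection
\[
\mathcal{M} := \{ B \subseteq \mathbb{R} \mid f^{-1}(B) \in S \}
\]
and aim to show that $\mathcal{M}$ is a $\sigma$-algebra on $\mathbb{R}$ containing every closed interval $[a,b]$. The hypothesis directly supplies the latter: $[a,b] \in \mathcal{M}$ for all $a,b \in \mathbb{R}$. Given that, the preceding lemma (the Borel $\sigma$-algebra on $\mathbb{R}$ is generated by the closed intervals) together with the defining minimality property of the generated $\sigma$-algebra forces $\mathcal{B}(\mathbb{R}) \subseteq \mathcal{M}$, which is exactly the statement that $f$ is Borel-measurable.

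The verification that $\mathcal{M}$ is a $\sigma$-algebra reduces to the standard fact that taking preimages commutes with all Boolean and countable set-theoretic operations. Concretely: $f^{-1}(\emptyset) = \emptyset \in S$, so $\emptyset \in \mathcal{M}$; for any $B \in \mathcal{M}$, $f^{-1}(B^c) = \Omega \setminus f^{-1}(B) = f^{-1}(B)^c$, which lies in $S$ since $S$ is closed under complements; and for any sequence $(B_n) \subseteq \mathcal{M}$, $f^{-1}\bigl(\bigcup_n B_n\bigr) = \bigcup_n f^{-1}(B_n)$ lies in $S$ since $S$, being a $\sigma$-algebra, is closed under countable unions.

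The only substantive step is the reliance on the generating-set characterisation of $\mathcal{B}(\mathbb{R})$ (which is the preceding lemma) and on $S$ being a $\sigma$-algebra (rather than merely an algebra), so that countable unions of preimages remain admissible. There is no real obstacle here, but I would expect the minor care-point to be the implicit appeal to the minimality property of the generated $\sigma$-algebra, i.e.\ the fact that any $\sigma$-algebra on $\mathbb{R}$ containing the generators already contains everything $\mathcal{B}(\mathbb{R})$ contains; this is precisely the definition of ``generated by'' and requires no separate work.
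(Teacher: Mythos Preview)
Your argument is correct and is precisely the standard ``good sets'' proof one finds in any measure-theory text: verify that $\mathcal{M} = \{B \subseteq \mathbb{R} \mid f^{-1}(B) \in S\}$ is a $\sigma$-algebra (using that preimages commute with complements and countable unions, and that $S$ is a $\sigma$-algebra), observe that it contains the closed intervals by hypothesis, and invoke the preceding lemma together with the minimality of the generated $\sigma$-algebra to conclude $\mathcal{B}(\mathbb{R}) \subseteq \mathcal{M}$.

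There is nothing to compare against here: the paper does not supply its own proof of this proposition but simply records it as folklore with a reference to Halmos. Your write-up is exactly what one would expect for such a result, and it correctly isolates the two ingredients the paper cares about downstream --- the generating-set lemma for $\mathcal{B}(\mathbb{R})$ and the need for $S$ to be a genuine $\sigma$-algebra rather than merely an algebra (which is why the paper immediately afterwards introduces the separate notion of \emph{weak} Borel-measurability for the algebra case).
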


If the underlying algebra $S$ is not a $\sigma$-algebra and if $\mu$ is only a content, then the requirement that the preimages of the above collection of intervals are included is still statable but might result in something weaker than full Borel-measurability. We call a function $f:\Omega\to\mathbb{R}$ where the preimages of all intervals $[a,b]$ for $a,b\in\mathbb{R}$ are included in a corresponding algebra $S\subseteq 2^\Omega$ to be \emph{weakly Borel-measurable}. It will in particular be this notion of weakly Borel-measurable functions that we will rely on later in the context of our approach towards Lebesgue integrals for probability contents. It should be noted that by requiring the inclusion $f^{-1}([a,b])\in S$ for two real numbers $a,b\in\mathbb{R}$ and an algebra $S$, we in particular also obtain that $f^{-1}([a,b))=f^{-1}([a,b])\cap \left(f^{-1}([b,b])\right)^c\in S$ as $S$ is an algebra.\footnote{Note that in the context of algebras, this is a particular benefit from working with closed intervals in the above notion of weak Borel-measurability. If, e.g., one instead would work with half-open intervals $[a,b)$, then defining the closed intervals requires the use of a countably infinite intersection via $[a,b]=\bigcap_{k\in\mathbb{N}}[a,b+2^{-k})$ which can only be sustained in $\sigma$-algebras.}\\

To formally deal with the notion of (weak) Borel-measurability, we thus need an access to the collection of the closed intervals $[a,b]$ for $a,b\in\mathbb{R}$ generating the Borel-algebra. For this, we will introduce an intensional approach to real intervals in the next subsection to provide formal means of operating on these generators. These intensional variants of real intervals can then be processed by a general type of inverse map using which we will be able to state the measurability of a function formally.

\subsection{Intensional Intervals}

Concretely, we now provide a quantifier-free (and thus in a way intensional) account of the closed intervals $[a,b]$ (and thus also of the half-open intervals $[a,b)$ as discussed above) by introducing a further constant to the language:
\begin{itemize}
\item $[\cdot,\cdot]$ of type $0(1)(1)(1)$.
\end{itemize}
Given two inputs $a^1, b^1$, this function shall return a characteristic function for an intensional representation of the corresponding interval. For this, we add the following axioms:
\begin{align*}
&\forall a^1,b^1,r^1\left([a,b](r)\leq_0 1\right), \tag*{$([\cdot,\cdot])_1$}\\
&\forall a^1,b^1,r^1\left(r\in [a,b]\to a\leq_\mathbb{R} r\leq_\mathbb{R}b\right), \tag*{$([\cdot,\cdot])_2$}\\
&\forall a^1,b^1,r^1\left(a<_\mathbb{R}r<_\mathbb{R} b\to r\in [a,b]\right), \tag*{$([\cdot,\cdot])_3$}\\
&\forall a^1,b^1\left(a,b\in [a,b]\right). \tag*{$([\cdot,\cdot])_4$}
\end{align*}
Here, we wrote $[a,b]$ for $[\cdot,\cdot]ab$ as well as $r\in [a,b]$ for $[a,b](r)=_00$. Note that this is an intensional representation of the set as we have $a,b\in [a,b]$ but we can not conclude from $r=a$ or $r=b$ that $r\in [a,b]$.\\

We also introduce the following notation used later: if we are given a system $\mathcal{C}^\omega$, we write $\mathcal{C}^\omega[\mathrm{Int}]$ to denote the extension of $\mathcal{C}^\omega$ by the above constant $[\cdot,\cdot]$ and the axioms $([\cdot,\cdot])_1$ -- $([\cdot,\cdot])_4$ for treating the closed intervals.\\

In similarity to the discussion above, these closed intervals then also provide a quantifier-free access to the half-open intervals in the following way: We define $[\cdot,\cdot)$ of type $0(1)(1)(1)$ via
\[
[\cdot,\cdot):=\lambda a^1,b^1,r^1.\max\left\{[a,b](r),1-[b,b](r)\right\}.
\]
Also for $[\cdot,\cdot)$, we write $[a,b)$ for $[\cdot,\cdot)ab$ as well as $r\in [a,b)$ for $[a,b)(r)=_00$.\\

In the context of that definition, we in particular obtain relatively immediately that $[\cdot,\cdot)$ defined as such satisfies properties that intensionally specify the half-open intervals similar as to how we have specified closed intervals above with the axioms $([\cdot,\cdot])_1$ -- $([\cdot,\cdot])_4$, i.e.\ we for one have $a\in [a,b)$ but from $r=a$, we can not infer $r\in [a,b)$ and we have $b\not\in [a,b)$ but from $r=b$, we can not infer $r\not\in [a,b)$. This is collected in the following lemma:

\begin{lemma}
The system $\mathcal{A}^\omega[\mathrm{Int}]$ proves the following properties of $[\cdot,\cdot)$:
\begin{enumerate}
\item $\forall a^1,b^1,r^1\left([a,b)(r)\leq_0 1\right)$,
\item $\forall a^1,b^1,r^1\left(r\in [a,b)\to a\leq_\mathbb{R} r\leq_\mathbb{R}b\right)$,
\item $\forall a^1,b^1,r^1\left(a<_\mathbb{R}r<_\mathbb{R} b\to r\in [a,b)\right)$,
\item $\forall a^1,b^1\left(a<_\mathbb{R}b\to a\in [a,b)\land b\not\in [a,b)\right)$.
\end{enumerate}
\end{lemma}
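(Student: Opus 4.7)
\emph{Plan.} The proof hinges on unpacking the definition
\[
[a,b)(r) := \max\{[a,b](r),\ 1 - [b,b](r)\}
\]
and keeping in mind that both $[a,b](r)$ and $[b,b](r)$ take values in $\{0,1\}$ by axiom $([\cdot,\cdot])_1$. Consequently $r \in [a,b)$ (i.e.\ $[a,b)(r) =_0 0$) holds exactly when $[a,b](r) = 0$ \emph{and} $[b,b](r) = 1$, i.e.\ when $r \in [a,b]$ and $r \notin [b,b]$. With that observation in hand, item (1) is immediate from the fact that $\max$ of two values bounded by $1$ is still bounded by $1$, invoking $([\cdot,\cdot])_1$ for both summands.

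For item (2), from $r \in [a,b)$ we in particular get $r \in [a,b]$, and then $([\cdot,\cdot])_2$ yields $a \leq_\mathbb{R} r \leq_\mathbb{R} b$. For item (3), the hypothesis $a <_\mathbb{R} r <_\mathbb{R} b$ gives $r \in [a,b]$ via $([\cdot,\cdot])_3$. The remaining task is to argue $[b,b](r) = 1$, i.e.\ $r \notin [b,b]$: were $r \in [b,b]$, then $([\cdot,\cdot])_2$ applied to the degenerate interval $[b,b]$ would force $r =_\mathbb{R} b$, contradicting $r <_\mathbb{R} b$. Thus $1 - [b,b](r) = 0$, and combined with $[a,b](r) = 0$ the maximum is $0$.

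Item (4) is handled in two halves, both relying on $a <_\mathbb{R} b$ in the same way. For $a \in [a,b)$, axiom $([\cdot,\cdot])_4$ gives $a \in [a,b]$, and $a \in [b,b]$ would force $a =_\mathbb{R} b$ by $([\cdot,\cdot])_2$ (contradicting $a <_\mathbb{R} b$), so $1 - [b,b](a) = 0$. For $b \notin [a,b)$, axiom $([\cdot,\cdot])_4$ directly gives $b \in [b,b]$, hence $[b,b](b) = 0$, hence $1 - [b,b](b) = 1$, so the defining maximum is at least $1$ and in particular nonzero.

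\emph{Where the main subtlety lies.} None of the steps are technically difficult, but the one point requiring care is the intensional asymmetry built into $([\cdot,\cdot])_2$ versus $([\cdot,\cdot])_3$/$([\cdot,\cdot])_4$: from $r \in [b,b]$ one \emph{can} extract $r =_\mathbb{R} b$, while the converse direction is not available in general. The arguments above are organised so that this implication is always used in the admissible direction (extracting an equality from a membership and then contradicting a strict inequality), which is why the proof goes through without appealing to anything beyond the four stated axioms for $[\cdot,\cdot]$ and the quantifier-free reasoning available in $\mathcal{A}^\omega[\mathrm{Int}]$.
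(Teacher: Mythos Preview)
Your proof is correct and is precisely the kind of unpacking the paper has in mind; the paper itself omits the proof entirely, stating only that it is ``rather immediate.'' Your argument fills in those details faithfully, using the four axioms for $[\cdot,\cdot]$ in the expected way, and your remark about the intensional asymmetry (that $r\in[b,b]$ yields $r=_\mathbb{R} b$ via $([\cdot,\cdot])_2$, while the converse is unavailable) correctly identifies the only point requiring care.
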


We omit the proof as it is rather immediate.

\subsection{The inverse map}\label{sec:invMap}

We now provide a treatment of the inverse map for a given function $f$ of type $1(\Omega)$. For this, we actually introduce a uniform operator into the language via a constant
\begin{itemize}
\item $(\cdot)^{-1}$ of type $0(\Omega)(0(1))(1(\Omega))$
\end{itemize}
that provides an inverse map for any given function $f^{1(\Omega)}$ in the sense that, writing $f^{-1}$ for $(\cdot)^{-1}f$, the functional $f^{-1}$ receives a subset of the reals coded via a characteristic function $A^{0(1)}$ and maps this into a characteristic function $f^{-1}A$ of type $0(\Omega)$ coding a subset of the underlying space $\Omega$.\\

This type of map is then governed by the following two axioms:
\begin{align*}
&\forall f^{1(\Omega)}\forall A^{0(1)}\forall x^\Omega\left(f^{-1}Ax\leq_0 1\right), \tag*{$(\textrm{Inv})_1$}\\
&\forall f^{1(\Omega)}\forall A^{0(1)}\forall x^\Omega\left( x\in f^{-1}A\leftrightarrow f(x)\in A\right). \tag*{$(\textrm{Inv})_2$}
\end{align*}
Here, we wrote $f(x)\in A$ for $Af(x)=_00$ and $x\in f^{-1}A$ for $f^{-1}Ax=_00$.\\

Also for this type of extension, we introduce the following generic notation: given a system $\mathcal{C}^\omega$, we write $\mathcal{C}^\omega[\mathrm{Inv}]$ to denote the extensions of $\mathcal{C}^\omega$ by the constant $(\cdot)^{-1}$ and the above axioms for treating the inverse map.

\subsection{Measurability of functions}

In the context of the intensional representations for closed intervals generating the Borel $\sigma$-algebra on the reals as well as using the general inverse map introduced before, we are now in the position of formulating the (weak) Borel-measurability of a function $f^{1(\Omega)}$ formally in the underlying language by
\[
\forall a^1,b^1\exists A^S\forall x^\Omega\left( x\in f^{-1}([a,b])\leftrightarrow x\in A\right).
\]
The inner matrix (based on the fact that stating element relations via $\in$ is quantifier-free and as we use the abbreviation $x\in f^{-1}([a,b])$ for $f^{-1}[a,b]x=_00$ as introduced in Section \ref{sec:invMap}) is  quantifier-free and thus the above sentence is a generalized $\Pi_3$-statement. Similar as with the monotonicity statement of the content $\PP$, this statement would be admissible in the context of the monotone Dialectica interpretation if the quantification over elements of type $S$ could be conceived of as being bounded in a suitable sense. Similar as we argued with the monotonicity of $\PP$, a crucial perspective for our formal approach to bound extraction theorems later on will be that, besides the whole space $\Omega$, we will also be able to regard the space $S$ as uniformly bounded, formally encapsulated via a corresponding suitable extension of the notion of majorizability to $S$ used later. In that context, such a sentence sentence has a trivial monotone functional interpretation and we will use this later to formulate admissible axioms stating that certain classes of functions are indeed measurable.

\section{Treating integration over probability contents}\label{sec:integral}

We now want to extend the previous system $\mathcal{F}^\omega[\PP]$ for probability contents on algebras so that we can treat a certain class of integrable functions $f:\Omega\to\mathbb{R}$. With that, we thus in particular provide a firm base for random variables and their moments as used in various applications which will be illustrated further by Section \ref{sec:appliations}.\\

For the usual approach to the integral over contents, which mimics that of the Lebesgue integral, we mainly follow the exposition given in \cite{RR1983} (where the corresponding notion is introduced under the name of the ``D-integral'') which we detail here to some degree to provide the necessary mathematical basics for the axiomatizations chosen later. Concretely, let $\Omega$ be a set and $S$ an algebra on it and let $\mu$ be a finite content on $S$ (i.e.\ $\mu(\Omega)<+\infty$). One then first arrives at a notion of simple function that is completely analogous to how it is usually defined in the context of measure theory, i.e.\ a \emph{simple function} is a function $f:\Omega\to\mathbb{R}$ of the form
\[
f(x)=\sum_{i=0}^n b_i\chi_{A_i}
\]
for given sets $A_i\in S$ and values $b_i\in\mathbb{R}$.\footnote{In some cases, as e.g.\ also in \cite{RR1983}, one requires the sets $A_i$ to be mutually disjoint and to cover the whole space $\Omega$ but we do not include these requirements here for simplicity.} For such a function $f$, the integral over $\mu$ is simply defined as
\[
\int{f}\,\mathrm{d}\mu=\sum_{i=0}^n b_i\mu(A_i),
\]
also in similarity to usual Lebesgue integrals over measures. A general function $f:\Omega\to\mathbb{R}$ is now declared \emph{integrable} if there is a sequence of simple functions $f_n$ such that (1) the $f_n$ converge to $f$ in a suitable sense\footnote{The corresponding notion of convergence is dubbed \emph{hazy convergence} in \cite{RR1983} and relies on the use of a corresponding outer content (which is similar to an outer measure) constructed from $\mu$. Here, we will however not rely on any precise details regarding this notion.} and (2) the sequence satisfies
\[
\lim_{n,m\to\infty}\int{\vert f_n-f_m\vert}\,\mathrm{d}\mu=0.
\]
Such a sequence is called a \emph{determining sequence} for $f$ and using such a determining sequence, one then defines the integral of $f$ via
\[
\int{f}\,\mathrm{d}\mu=\lim_{n\to\infty}\int{f_n}\,\mathrm{d}\mu.
\]
Crucially, this limit is well-defined as the following general result shows:
\begin{lemma}[Lemma 4.4.12 in \cite{RR1983}]
Let $f$ be an integrable function and let $(f_n)$ be a determining sequence for $f$. Then $f_n-f$ is integrable and 
\[
\lim_{n\to\infty}\int{\vert f_n-f\vert}\,\mathrm{d}\mu=0.
\]
\end{lemma}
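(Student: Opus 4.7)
The plan is to produce an explicit determining sequence for $f_n - f$ (and for $|f_n - f|$) by subtracting from the fixed function $f_n$ the tail of the given determining sequence $(f_n)$ itself, and then exploit the $L^1$-Cauchy property that $(f_n)$ already satisfies.

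First, fix $n$ and consider, for varying $n'$, the simple functions $h_{n'} := f_n - f_{n'}$ (these are simple as differences of simple functions). As $n' \to \infty$, $f_{n'} \to f$ hazily by hypothesis, and hazy convergence is preserved under subtracting the fixed simple function $f_n$, so $h_{n'} \to f_n - f$ hazily. The sequence is $L^1$-Cauchy, since
\[
\int |h_{n'} - h_{n''}|\,\mathrm{d}\mu = \int|f_{n'} - f_{n''}|\,\mathrm{d}\mu \xrightarrow[n',n''\to\infty]{} 0
\]
by the assumption that $(f_n)$ is a determining sequence for $f$. Hence $(h_{n'})_{n'}$ is a determining sequence for $f_n - f$, which in particular establishes the integrability of $f_n - f$, with
\[
\int(f_n - f)\,\mathrm{d}\mu = \lim_{n'\to\infty}\int(f_n - f_{n'})\,\mathrm{d}\mu.
\]

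For $|f_n - f|$, the sequence $|h_{n'}| = |f_n - f_{n'}|$ remains simple, converges hazily to $|f_n - f|$ (hazy convergence is stable under the continuous operation of absolute value), and remains $L^1$-Cauchy because of the pointwise inequality $\bigl||h_{n'}| - |h_{n''}|\bigr| \leq |h_{n'} - h_{n''}| = |f_{n'} - f_{n''}|$, whose $\mu$-integral tends to zero. Thus $|f_n - f|$ is integrable, and
\[
\int|f_n - f|\,\mathrm{d}\mu = \lim_{n'\to\infty}\int|f_n - f_{n'}|\,\mathrm{d}\mu.
\]
To deduce $\lim_{n\to\infty}\int|f_n - f|\,\mathrm{d}\mu = 0$, fix $\varepsilon > 0$ and use the $L^1$-Cauchyness of $(f_n)$ to pick $N$ with $\int|f_n - f_{n'}|\,\mathrm{d}\mu < \varepsilon$ for all $n,n' \geq N$. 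Passing $n' \to \infty$ in the last displayed identity yields $\int|f_n - f|\,\mathrm{d}\mu \leq \varepsilon$ for all $n \geq N$, as required.

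The main obstacle is verifying the two preservation properties of hazy convergence invoked above: stability under subtraction of a fixed simple function, and under application of absolute value. Both are intuitive once one regards hazy convergence as the content-theoretic analogue of convergence in measure/probability driven by the outer content built from $\mu$, but a rigorous justification must appeal to the concrete definition in \cite{RR1983}. Once these preservation facts are available, the argument reduces to an essentially formal manipulation of the $L^1$-Cauchy structure of the determining sequence, and neither hazy convergence nor the inner limit needs to be computed explicitly.
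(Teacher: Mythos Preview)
The paper does not actually prove this lemma: it is stated as Lemma 4.4.12 of \cite{RR1983} and quoted without proof, serving only as background motivation for why the integral of an integrable function is well-defined before the authors move on to their axiomatic treatment. So there is no ``paper's own proof'' to compare your attempt against.

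On its own merits, your argument is the standard one and is essentially correct. Fixing $n$ and using $h_{n'}=f_n-f_{n'}$ as a determining sequence for $f_n-f$, and $|h_{n'}|$ for $|f_n-f|$, is exactly how one proceeds in \cite{RR1983}; the final $\varepsilon$-argument passing $n'\to\infty$ in $\int|f_n-f_{n'}|\,\mathrm{d}\mu$ is also the right way to close. You have correctly flagged the only nontrivial ingredients, namely that hazy convergence is preserved under translation by a fixed simple function and under the map $g\mapsto|g|$. Both hold: hazy convergence of $g_k\to g$ is defined via $\mu^*(\{|g_k-g|>\varepsilon\})\to 0$ for the outer content $\mu^*$, and since $(f_n-f_{n'})-(f_n-f)=f-f_{n'}$ and $\bigl||f_n-f_{n'}|-|f_n-f|\bigr|\le|f-f_{n'}|$, both preservation statements reduce immediately to the assumed hazy convergence $f_{n'}\to f$. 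With that observation made explicit, your proof is complete.
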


This notion of a (D-)integral defined for contents then shares many of the familiar properties of Lebesgue integrals defined for measures. One particularly useful property is that every integrable function $f$ is measurable in the following extended sense, called $T_2$-measurable in \cite{RR1983}: for any $\varepsilon>0$, there is a partition $A_0,\dots, A_n\in S$ of $\Omega$ such that $\mu(A_0)<\varepsilon$ and $\vert f(x)-f(y)\vert<\varepsilon$ for any $x,y\in A_i$ and any $i$. In particular, for any function $f$ that is measurable in this sense, one gets that $f$ is integrable if, and only if, $\vert f\vert$ is integrable (see e.g.\ Corollary 4.4.19 in \cite{RR1983}).\\

Now, a major part of the theory of integrals over contents (like e.g.\ a nice correspondence between the so-called D- and S-integrals, the latter being similar in spirit to a Riemann-Stieltjes integral, and the fact that the notions of $T_2$-measurability and integrability coincide as shown by Theorem 4.5.7 in \cite{RR1983}, among many others) depends on the assumption that the integrated functions are bounded. In that way, we will similarly require that all integrated function are bounded.

In fact, using the proof-theoretic perspective of the approach taken here to the proof mining metatheorems for contents on algebras, we find that this assumption of the boundedness of functions that is often imposed in the context of integration on contents is also suggested as a necessity in our formal approach by the notion of majorizability employed later. Concretely, as discussed before, to develop a proof-theoretically tame theory of algebras and contents, we have regarded $\Omega$ and $S$ as uniformly bounded in the sense that we later regard all elements of these spaces as uniformly majorized by the content of the full space $\mu(\Omega)$, i.e.\ by $1$ in the context of a probability content, which we denote in writing by $1\gtrsim_\Omega x$ and $1\gtrsim_S A$ for $x\in \Omega$ and $A\in S$. While this will be discussed comprehensively and in full formal detail later, we here look already at what this definition entails for majorizable functions $f$ of type $1(\Omega)$: a function $f^*$ of type $0(0)(0)$ is a majorant for $f$, written $f^*\gtrsim_{1(\Omega)} f$, if
\[
f^*(m)(k)\geq f^*(n)(j)\geq f(x)(i)\text{ whenever }m\geq n\gtrsim_\Omega x\text{ and }k\geq j\geq i.
\]
Therefore, as $1\gtrsim_\Omega x$ for any $x$, we in particular derive that 
\[
f(x)(k)\leq f^*(1)(k)
\]
for any $k$ and $x$ and thus, the real number represented by $f(x)$ is uniformly bounded by $f^*(1)(0)+1$ for any $x$. Thus, any majorizable function of type $1(\Omega)$ is bounded and so boundedness of integrable functions is suggested as a necessary assumption by the chosen proof-theoretic methodology.\\

Lastly, to provide a proof-theoretically tame approach to integration, we will actually require that the integrated functions are not only $T_2$-measurable in the sense discussed above but that they even are weakly Borel-measurable in the sense discussed before (i.e.\ that the preimages of closed intervals are included in the underlying algebra). Clearly, any bounded and weakly Borel-measurable function is also $T_2$-measurable in the previous sense and thus integrable as discussed before and while this class is slightly restricted compared to that of all integrable functions, we find that is has two central advantages: for one, as mentioned before, it allows for a smooth and proof-theoretically tame approach to the integral for this class of functions that is amenable to bound extraction theorems and, for another, in virtually all previous ad hoc proof mining applications to measure and probability theory involving integrals, already the stronger (or, in the context of $\sigma$-algebras, equivalent) property of being Borel-measurable is assumed so that a treatment of this class still allows for our metatheorems established later to provide a proof-theoretic explanation of the respective extractions.\\

Now, the formal approach to the integral over a probability content is to abstractly encode a suitable subspace of bounded and weakly Borel-measurable functions closed under linear combinations, multiplications with characteristic functions and absolute values\footnote{In the context of a probability measure over a $\sigma$-algebra, such a space could for example be the space of all bounded and Borel-measurable functions.} intensionally via the use of a characteristic function and then to introduce the integral for all functions from this space as well as the relevant closure properties using further constants and axioms. For this, we now initially introduce two further constants
\begin{itemize}
\item $I$ of type $0(1(\Omega))$,
\item $\norm{\cdot}_\infty$ of type $1(1(\Omega))$,
\end{itemize}
into the language of $\mathcal{F}^\omega[\PP,\mathrm{Int},\mathrm{Inv}]$. The first of these is the previously mention characteristic function providing an intensional account of a space closed under linear combinations, multiplications with characteristic functions and absolute values as well as containing only bounded and weakly Borel-measurable functions and the latter is used to formally introduce the supremum norm on these functionals. As initial axioms, we now therefore stipulate the following:
\begin{align*}
&\forall f^{1(\Omega)}\left( If\leq_01\right), \tag*{$(I)_1$}\\
&\forall A^S\left( (\lambda x.(x\in A))\in I\right), \tag*{$(I)_2$}\\
&\forall f^{1(\Omega)},a^1,b^1\exists A^S\forall x^\Omega\left(f\in I\to \left(x\in f^{-1}([a,b])\leftrightarrow x\in A\right)\right),\tag*{$(I)_3$}\\
&\forall f^{1(\Omega)}, x^\Omega\left( f\in I\to \left( \vert f(x)\vert\leq_\mathbb{R} \norm{f}_\infty\right)\right),\tag*{$(I)_4$}\\
&\forall f^{1(\Omega)},k^0\exists x^\Omega\left( f\in I\to \left( \norm{f}_\infty-2^{-k}\leq_\mathbb{R} \vert f(x)\vert\right)\right),\tag*{$(I)_5$}\\
&\forall f^{1(\Omega)},g^{1(\Omega)},\alpha^1,\beta^1\left(f,g\in I\to (\lambda x.(\alpha f(x)+\beta g(x)))\in I\right), \tag*{$(I)_6$}\\
&\forall f^{1(\Omega)}\left(f\in I\to \lambda x.\vert f(x)\vert\in I\right),\tag*{$(I)_7$}\\
&\forall f^{1(\Omega)},A^S\left(f\in I\to \lambda x.\left(f(x)(x\in A)\right)\in I\right).\tag*{$(I)_8$}
\end{align*}

The axioms $(I)_3$ and $(I)_5$ will again be admissible later because of the extended notion of majorizability on $\Omega$ and $S$. Note also that axioms $(I)_4$ and $(I)_5$ together specify $\norm{f}_\infty$ as the least upper bound on $\vert f(x)\vert$.\footnote{In fact, these two axioms can be seen as an instantiation of the general approach to tame suprema over bounded sets developed in \cite{Pis2023}.}\\

In the following, we will write $\chi_A$ for the function $\lambda x.(x\in A^c)$.\footnote{As customary in the context of integration, we want characteristic functions to take the value $1$ if the element is included in the set. As we previously have used $0$ for this, we used $A^c$ in the above definition.} Also, we in the following just briefly write $\alpha f+ \beta g$ for $\lambda x.(\alpha f(x)+\beta g(x))$ as well as $\vert f\vert$ for $\lambda x.(\vert f(x)\vert)$ and $f\chi_A$ for $\lambda x.\left(f(x)\chi_A(x)\right)$.\\

As the operations $\max$ and $\min$ can be defined on functions using the absolute value via
\[
\max\{f,g\}=(f+g+\vert f-g\vert)/2\text{ and }\min\{f,g\}=-\max\{-f,-g\},
\]
we immediately get that axiom $(I)_7$ implies the closure of $I$ under these operations and thus we have effectively axiomatized that $I$ in particular is a Riesz space of bounded functions with the respective operations and thus our approach is similar as to how abstract integration spaces are approached in the context of Daniell integrals \cite{Dan1918} where a similar collection of functions is presumed.\\

To deal with the integral, we add a further constant
\begin{itemize}
\item $\INT{\cdot}$ of type $1(1(\Omega))$.
\end{itemize}
The first two axioms for the integral are now that it behaves as expected on characteristic functions and that the integral is a linear function:
\begin{align*}
&\forall A^S\left( \INT{\chi_A}=_\mathbb{R}\PP(A)\right), \tag*{$(\int)_1$}\\
&\forall f^{1(\Omega)}, g^{1(\Omega)},\alpha^1,\beta^1\left( f,g\in I\to \INT{(\alpha f+\beta g)}=_\mathbb{R} \alpha\INT{f}+\beta\INT{g}\right).\tag*{$(\int)_2$}
\end{align*}
Using these two axioms, we immediately get that the integral behaves as expected on simple functions.\\

The major other statement that we need to axiomatize is that any function in $I$ is actually integrable in the sense that their integrals are well-defined and arise as the limit of a sequence of integrals of simple functions. To formulate this property, we now fully utilize the assumption that our functions are bounded and weakly Borel-measurable to derive the following general result that inspires the subsequent axiom: 

\begin{lemma}[essentially folklore]
Let $\Omega$ be a set, $S$ an algebra on $\Omega$ and $\mu$ a probability content on $S$. Let $f$ be a weakly Borel-measurable function and assume that $\vert f\vert$ is bounded by $b\in\mathbb{N}^*$, i.e.\ $\vert f(x)\vert \leq b$ for all $x\in \Omega$. For a given $k$, define
\[
I_{k,i}=\left[-b+\frac{i}{2^k},-b+\frac{i+1}{2^k}\right) \text{ for }i=0,1,\dots,2b2^k-2\text{ and }I_{k,2b2^k-1}=\left[\frac{b2^k-1}{2^k},b\right].
\]
Then:
\[
\forall k\in\mathbb{N}\left(\INT{\left\vert f-\sum_{i=0}^{2b2^k-1}\left(-b+\frac{i}{2^{k}}\right)\chi_{f^{-1}(I_{k,i})}\right\vert}\leq 2^{-k}\right).
\]
\end{lemma}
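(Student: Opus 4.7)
The plan is to exhibit
\[
s_k := \sum_{i=0}^{2b2^k-1}\left(-b+\frac{i}{2^k}\right)\chi_{f^{-1}(I_{k,i})}
\]
as a genuine simple function that approximates $f$ uniformly within $2^{-k}$, so that the integral estimate reduces via monotonicity of the D-integral to the trivial bound on the constant $2^{-k}$. The argument rests on three elementary facts: the intervals $\{I_{k,i}\}_i$ partition $[-b,b]$; the preimages $\{f^{-1}(I_{k,i})\}_i$ partition $\Omega$; and each preimage belongs to $S$.

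The first two facts are immediate from the construction: all but the last $I_{k,i}$ is half-open of length $2^{-k}$ starting at $-b+i/2^k$, and the last closes the right endpoint $b$, so the $I_{k,i}$ are pairwise disjoint with union $[-b,b]$; since $|f(x)| \le b$ for every $x \in \Omega$, the preimages inherit the partition property. The third fact uses weak Borel-measurability of $f$ together with the algebra structure of $S$: the last preimage $f^{-1}([b-2^{-k},b])$ lies in $S$ directly, while for $i<2b2^k-1$ and $a_i = -b+i/2^k$ we can write
\[
f^{-1}([a_i,a_i+2^{-k})) = f^{-1}([a_i,a_i+2^{-k}]) \cap \left(f^{-1}([a_i+2^{-k},a_i+2^{-k}])\right)^c,
\]
both factors of which lie in $S$ by weak Borel-measurability, and $S$ is closed under intersections and complements.

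Given this, for any $x \in \Omega$ there is a unique $i$ with $x \in f^{-1}(I_{k,i})$, and then $s_k(x) = -b+i/2^k$ while $f(x) \in I_{k,i}$; since both points lie in an interval of length at most $2^{-k}$, we have $|f(x) - s_k(x)| \le 2^{-k}$ pointwise on $\Omega$. Because $f$ is bounded and weakly Borel-measurable, $|f-s_k|$ is $T_2$-measurable and bounded, hence integrable, and monotonicity of the D-integral combined with $\mu(\Omega) = 1$ then yields $\int |f-s_k|\,\mathrm{d}\mu \le 2^{-k}$. The only genuinely nontrivial ingredient is the standard fact (essentially Corollary 4.4.19 in \cite{RR1983} together with basic properties of the D-integral) that this class of functions is integrable and that the D-integral is monotone on it; the rest is bookkeeping around which dyadic intervals are half-open versus closed so that the partition of $[-b,b]$ is genuine.
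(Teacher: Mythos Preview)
Your proof is correct and follows essentially the same approach as the paper: establish that the $I_{k,i}$ partition $[-b,b]$, pull this back to a partition of $\Omega$ into sets of $S$, obtain the pointwise bound $|f(x)-s_k(x)|\le 2^{-k}$, and then invoke integrability of bounded weakly Borel-measurable functions together with monotonicity of the D-integral. The only difference is cosmetic: the paper carries out the final estimate by splitting the integrand along the partition and summing the pieces against $\sum_i \mu(f^{-1}(I_{k,i}))=1$, whereas you apply monotonicity in one step to the global pointwise bound---both arrive at the same place by the same mechanism.
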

\begin{proof}
Let $k\in\mathbb{N}$. As all $I_{k,i}$ are disjoint and cover $[-b,b]$ and since $\vert f\vert$ is bounded by $b$, their preimages under $f$ are disjoint and cover $\Omega$. Thus
\[
1=\PP(\Omega)=\PP\left(\bigcup_{i=0}^{2b2^k-1}f^{-1}(I_{k_i})\right)=\sum_{i=0}^{2b2^k-1}\PP(f^{-1}(I_{k_i})).
\]
and for any $k\in\mathbb{N}$:
\[
f(x)=\sum_{i=0}^{2b2^k-1} f(x)\chi_{f^{-1}(I_{k,i})}(x).
\]
Further, for $x\in f^{-1}(I_{k,i})$, it clearly holds that 
\[
\left\vert f(x)-\left(-b+\frac{i}{2^{k}}\right)\right\vert\leq \frac{1}{2^k}.
\]
As $k$ was arbitrary, the function $f$ is integrable by Theorem 4.5.7 in \cite{RR1983} (use e.g.\ the equivalence between (viii) and (v)). Thus, using the monotonicity and linearity of the integral on contents (see e.g.\ Theorem 4.4.13 in \cite{RR1983}), we have 
\begin{align*}
\INT{\left\vert f-\sum_{i=0}^{2b2^k-1}\left(-b+\frac{i}{2^{k}}\right)\chi_{f^{-1}(I_{k,i})}\right\vert}&=\INT{\left\vert\sum_{i=0}^{2b2^k-1}\left(f+b-\frac{i}{2^{k}}\right)\chi_{f^{-1}(I_{k,i})}\right\vert}\\
&\leq \sum_{i=0}^{2b2^k-1}\INT{\left\vert f-\left(-b+\frac{i}{2^{k}}\right)\right\vert\chi_{f^{-1}(I_{k,i})}}\\
&\leq \sum_{i=0}^{2b2^k-1}\INT{\frac{1}{2^k}\chi_{f^{-1}(I_{k,i})}}\\
&\leq \sum_{i=0}^{2b2^k-1}\frac{1}{2^k}\PP(f^{-1}(I_{k,i}))\\
&\leq \frac{1}{2^k}\sum_{i=0}^{2b2^k-1}\PP(f^{-1}(I_{k,i}))=\frac{1}{2^k}.
\end{align*}
\end{proof}

To axiomatize the integrability of a function $f\in I$, it thus suffices to state the conclusion of the above lemma and although we could formalize this directly by employing the general inverse mapping and intensional intervals, we can actually avoid this machinery here at the mild expense of quantifying over the sequence of sets used in the simple functions instead of explicitly specifying them. Concretely, we consider the following third axiom\footnote{Note that in the context of this axiom, we can introduce the sum expression by the recursor constants of the underlying system $\mathcal{A}^\omega$.}
\[
\forall f^{1(\Omega)}\forall k^0\exists A^{S(0)}\left(f\in I\to \INT{\left\vert f-\sum_{i=0}^{2^{k+1}b_f-1}\left(-b_f+\frac{i}{2^{k}}\right)\chi_{A(i)}\right\vert}\leq_\mathbb{R} 2^{-k}\right)\tag*{$(\int)_3$}
\]
where we wrote $b_f:=[\norm{f}_\infty](0)+1$ and have used that, since $\norm{f}_\infty\geq_\mathbb{R} \vert f(x)\vert$ for all $x$, it holds that the natural number $b_f$ similarly bounds $\vert f\vert$. Again, by the later considerations on majorizability whereby also $A$ of type $S(0)$ can be regarded as uniformly bounded, this axiom will later be admissible in the context of our approach to proof mining metatheorems via the monotone functional interpretation.\\

Lastly, to also devise a practical system, it will be convenient to also axiomatically include (instead of discussing how it might be provable in the system) that the integral of a positive function $f\in I$ is positive. Naively, this statement can be written as
\[
\forall f^{1(\Omega)}\left( f\in I\land \forall x^\Omega\left( f(x)\geq_\mathbb{R} 0\right)\to \INT{f}\geq_\mathbb{R} 0\right)
\]
which is not a priori admissible in the context of our approach to bound extraction theorems due to the hidden negative universal type $0$ quantifier in $\geq_{\mathbb{R}}$. However, if we rewrite the above statement in the prenexed form 
\[
\forall f^{1(\Omega)}\forall k^0\exists x^\Omega\exists j^0\left( f\in I\land \INT{f}<_\mathbb{R} -2^{-k}\to \left( f(x)\leq_\mathbb{R} -2^{-j}\right)\right),
\]
we can witness the quantifier over $j$ simply by $j=k$ which can be immediately seen using only very basic properties of the integral (see e.g.\ Theorem 4.4.13 in \cite{RR1983}) so that we arrive at the axiom
\[
\forall f^{1(\Omega)}\forall k^0\exists x^\Omega\left( f\in I\land \INT{f}<_\mathbb{R} -2^{-k}\to\left( f(x)\leq_\mathbb{R} -2^{-k}\right)\right)\tag*{$(\int)_4$}
\]
which, in the context of our perspective that we regard quantification over $\Omega$ as bounded, will later be admissible in the context of the monotone functional interpretation.\\

We want to note that the axioms $(\int)_2$ -- $(\int)_4$ roughly correspond to the four central properties of an abstract ``I-integral'' in the context of Daniell's approach to integration \cite{Dan1918} and so, in some sense, our approach to the integral here can be regarded as a sort of effectivized implementation of the Daniell integral.\\

With all of these constants and axioms, we then arrive at the following system for integrals:

\begin{definition}
We write $\mathcal{F}^\omega[\PP,\mathrm{Integral}]$ for the system resulting from $\mathcal{F}^\omega[\PP,\mathrm{Int},\mathrm{Inv}]$ by adding the above constants $I,\norm{\cdot}_\infty,\INT{\cdot}$ together with the axioms $(I)_1$ -- $(I)_7$ as well as $(\int)_1$ -- $(\int)_4$. Further, we write $\mathcal{F}^\omega[\bigcup,\PP,\mathrm{Integral}]$ for the system $\mathcal{F}^\omega[\PP,\mathrm{Integral}]$ extended with the constant $\bigcup$ and the axiom $(\bigcup)_1$ as well as the rule $(\bigcup)_2$.
\end{definition}

We end this section with some immediate properties of the integral over contents that are provable in our system. A more intricate use of the integrals will then be made in Section \ref{sec:appliations} where they feature crucially in the formal explanation of a previous proof-mining application (and in particular highlight the usability of the above axiomatic approach to the integral in regard to the proof mining practice). As is common in proof mining however, this approach to the integral enjoys a large degree of flexibility in the sense that it can, of course, be immediately augmented by further constants and axioms specifying certain properties of the integral that might be crucial in a certain application.

\begin{lemma}\label{lem:intProp}
The system $\mathcal{F}^\omega[\PP,\mathrm{Integral}]$ proves:
\begin{enumerate}
\item $\INT{\cdot}$ is monotone w.r.t.\ pointwise inequality, i.e.
\[
\forall f^{1(\Omega)}, g^{1(\Omega)}\left( f,g\in I\land \forall x^\Omega\left( f(x)\leq_\mathbb{R} g(x)\right) \to \INT{f}\leq_\mathbb{R}\INT{g}\right).
\]
\item $\INT{\cdot}$ is extensional w.r.t.\ pointwise equality, i.e.
\[
\forall f^{1(\Omega)},g^{1(\Omega)}\left( f,g\in I\land \forall x^\Omega\left( f(x)=_\mathbb{R}g(x)\right)\to \left\vert\INT{f}-\INT{g}\right\vert=_\mathbb{R}0\right).
\]
\item $\INT{\cdot}$ is monotone w.r.t.\ inequality almost everywhere, i.e.
\begin{gather*}
\forall f^{1(\Omega)},g^{1(\Omega)}\bigg( f,g\in I\land \exists A^S\left( \PP(A)=0\land \forall x^\Omega\left( x\in A^c\to f(x)\leq_\mathbb{R}g(x)\right)\right)\\
\to \INT{f}\leq_\mathbb{R}\INT{g}\bigg).
\end{gather*}
\item $\INT{\cdot}$ is extensional w.r.t.\ equality almost everywhere, i.e.
\begin{gather*}
\forall f^{1(\Omega)},g^{1(\Omega)}\bigg( f,g\in I\land \exists A^S\left( \PP(A)=0\land \forall x^\Omega\left( x\in A^c\to f(x)=_\mathbb{R}g(x)\right)\right)\\
\to \left\vert\INT{f}-\INT{g}\right\vert=_\mathbb{R}0\bigg).
\end{gather*}
\item $\INT{\cdot}$ behaves well with absolute values, i.e.
\[\forall f^{1(\Omega)}\left( f\in I\to \left\vert\INT{f}\right\vert\leq_\mathbb{R}\INT{\vert f\vert}\right).
\]
\end{enumerate}
\end{lemma}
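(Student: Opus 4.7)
The plan is to establish (1) directly from the axioms and then to bootstrap the remaining items from it in the logical order (1), (2), (5), (3), (4).

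For (1), set $h := g - f$, which lies in $I$ by $(I)_6$, and observe that $h(x) \geq_\RR 0$ pointwise by hypothesis. Suppose toward contradiction that $\INT{h} <_\RR 0$; then $\INT{h} <_\RR -2^{-k}$ for some $k$, so axiom $(\int)_4$ furnishes an $x^\Omega$ with $h(x) \leq_\RR -2^{-k} <_\RR 0$, contradicting pointwise nonnegativity. Hence $\INT{h} \geq_\RR 0$, and $(\int)_2$ yields $\INT{g} \geq_\RR \INT{f}$. Items (2) and (5) now follow directly: (2) by applying (1) to the two pointwise inequalities $f \leq_\RR g$ and $g \leq_\RR f$ implied by the hypothesis $f =_\RR g$ pointwise; (5) from $(I)_7$, $(I)_6$, the pointwise bounds $-\vert f\vert \leq_\RR f \leq_\RR \vert f\vert$, and linearity $(\int)_2$, which give $-\INT{\vert f\vert} \leq_\RR \INT{f} \leq_\RR \INT{\vert f\vert}$.

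For (3), pick $A$ as in the hypothesis, set $h := g - f \in I$, and decompose pointwise as $h = h\chi_{A^c} + h\chi_A$, the summands lying in $I$ via $(I)_2$, $(I)_6$, and $(I)_8$. On $A^c$ the hypothesis gives $h(x) \geq_\RR 0$ while on $A$ the factor $\chi_{A^c}$ vanishes, so $h\chi_{A^c} \geq_\RR 0$ pointwise and (1) yields $\INT{h\chi_{A^c}} \geq_\RR 0$. For the other summand, $(I)_4$ supplies the pointwise bound $\vert h\chi_A(x)\vert \leq_\RR \norm{h}_\infty\chi_A(x)$, and then (1), (5), $(\int)_1$, $(\int)_2$, and the hypothesis $\PP(A) =_\RR 0$ combine to give $\vert \INT{h\chi_A}\vert \leq_\RR \norm{h}_\infty\PP(A) =_\RR 0$. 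Linearity $(\int)_2$ together with extensionality (2) applied to the pointwise identity $h =_\RR h\chi_{A^c} + h\chi_A$ then gives $\INT{h} =_\RR \INT{h\chi_{A^c}} + \INT{h\chi_A} \geq_\RR 0$, hence $\INT{f} \leq_\RR \INT{g}$. Item (4) is then immediate by applying (3) in both directions.

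I expect no conceptual obstacle; the only mildly delicate step is bookkeeping the intensional identities around $\chi_A$. In particular, the pointwise identity $\chi_{A^c}(x) + \chi_A(x) =_\RR 1$ (needed so that $h\chi_{A^c} + h\chi_A$ pointwise equals $h$) must be verified by a case split on $x \in A$ using $(\in)_3$ and $(\in)_4$, and the pointwise bound on $\vert h\chi_A\vert$ combines $(I)_4$ with the fact that $\chi_A$ is $\{0,1\}$-valued from $(\in)_1$. Once these routine observations are made, every step reduces to a direct combination of the axioms $(I)_2$--$(I)_8$ and $(\int)_1$--$(\int)_4$.
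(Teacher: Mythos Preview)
Your proof is correct and follows essentially the same approach as the paper's own argument: (1) via $(\int)_4$ and linearity, (2) and (4) as two-sided applications of (1) and (3), (5) from the pointwise bounds $-\vert f\vert\leq f\leq\vert f\vert$, and (3) by splitting along $\chi_A$ and $\chi_{A^c}$ and bounding the $\chi_A$-part using $\norm{\,\cdot\,}_\infty$ and $\PP(A)=0$. The only cosmetic differences are that you work throughout with $h=g-f$ rather than with $f,g$ separately, and that you reorder to prove (5) before (3) so that the two-sided estimate on $\INT{h\chi_A}$ can be obtained via (5); the paper instead keeps the original order and uses only the one-sided bound $\INT{(f-g)\chi_A}\leq\norm{f-g}_\infty\PP(A)=0$ directly from (1). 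Both routes are equally direct.
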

\begin{proof}
\begin{enumerate}
\item If $\forall x\left( f(x)\leq g(x)\right)$, note that we have $(g-f)(x)\geq 0$ for all $x$. By axiom $(\int)_4$, we have thus that $\INT{(f-g)}\geq 0$ and therefore, we get $\INT{f}\leq\INT{g}$ by axiom $(\int)_2$.
\item This immediately follows from item (1).
\item By axiom $(I)_8$, we have $f\chi_{A^c},g\chi_{A^c}\in I$. As in particular $f\chi_{A^c}(x)\leq g\chi_{A^c}(x)$ holds for any $x$, we get
\[
\INT{f\chi_{A^c}}\leq\INT{g\chi_{A^c}}
\]
by item (1). Similar, as the axioms for the supremum norm imply that $(f-g)\chi_A(x)\leq \norm{f-g}_\infty\chi_A(x)$ holds for all $x$, item (1) together with axiom $(\int)_1$ implies
\[
\INT{(f-g)\chi_A}\leq\INT{\norm{f-g}_\infty\chi_A}=\norm{f-g}_\infty\PP(A)=0
\]
which yields
\[
\INT{f\chi_A}\leq\INT{g\chi_A}.
\]
As $f(x)= f\chi_A(x)+f\chi_{A^c}(x)$ holds for all $x$ (and similarly for $g$), we thus in particular get the claim using axiom $(\int)_2$.
\item This immediately follows from item (3).
\item Note that we have provably that
\[
-\vert f(x)\vert \leq f(x)\leq \vert f(x)\vert
\]
for any $x$ so that by item (1) and axiom $(\int)_2$, we have
\[
-\INT{\vert f\vert}\leq\INT{f}\leq\INT{\vert f\vert},
\]
i.e.\ that
\[
\left\vert\INT{f}\right\vert\leq\INT{\vert f\vert}.
\]
\end{enumerate}
\end{proof}

Note that by item (2) of the above lemma together with the axioms on the supremum norm $\norm{\cdot}_\infty$, we in particular have that $\INT{\cdot}$ is extensional w.r.t.\ $\norm{\cdot}_\infty$ in the sense that
\[
\forall f^{1(\Omega)},g^{1(\Omega)}\left( \norm{f-g}_\infty=_\mathbb{R}0\to \left\vert\INT{f}-\INT{g}\right\vert=_\mathbb{R}0\right).
\]

\section{A bound extraction theorem}\label{sec:metatheorem}
We now establish our main results, the bound extraction theorems, for the systems introduced previously. For that, as hinted on in the introduction, we follow the approach of the first metatheorems using abstract types presented in \cite{Koh2005} (see also \cite{GeK2008,Koh2008}).\footnote{As mentioned in the introduction already, this approach has been adapted to provide a treatment amenable to proof mining methods of a wide array of different mathematical notions in the past. We again refer to the references in the introduction for these results.} As the outline of our approach is rather standard in that way, we will sometimes only sketch the arguments instead of giving full detailed proofs, only spelling out those parts that are sensitive to the new ideas introduced in this paper. Throughout, to ease notation, we write $\mathcal{C}^\omega$ for the system $\mathcal{F}^\omega$ or one of its extensions as discussed previously.\\

As in the works mentioned above, the main tool for the metatheorems presented here is G\"odel's Dialectica interpretation \cite{Goe1958} which is combined with a negative translation by Kuroda \cite{Kur1951}. We recall the definitions of those central proof interpretations here:

\begin{definition}[\cite{Goe1958,Tro1973}]
The Dialectica interpretation $A^D=\exists\underline{x}\forall\underline{y} A_D(\underline{x},\underline{y})$ of a formula $A$ in the language of $\mathcal{C}^\omega$ (and its extensions) is defined via the following recursion on the structure of the formula:
\begin{enumerate}
\item $A^D:=A_D:=A$ for $A$ being a prime formula.
\end{enumerate}
If $A^D=\exists\underline{x}\forall\underline{y} A_D(\underline{x},\underline{y})$ and $B^D=\exists\underline{u}\forall\underline{v} B_D(\underline{u},\underline{v})$, we set
\begin{enumerate}
\setcounter{enumi}{1}
\item $(A\land B)^D:=\exists\underline{x},\underline{u}\forall\underline{y},\underline{v}(A\land B)_D$\\ where $(A\land B)_D(\underline{x},\underline{u},\underline{y},\underline{v}):=A_D(\underline{x},\underline{y})\land B_D(\underline{u},\underline{v})$,
\item $(A\lor B)^D:=\exists z^0,\underline{x},\underline{u}\forall\underline{y},\underline{v}(A\lor B)_D$\\ where $(A\lor B)_D(z^0,\underline{x},\underline{u},\underline{y},\underline{v}):=(z=0\rightarrow A_D(\underline{x},\underline{y}))\land (z\neq 0\rightarrow B_D(\underline{u},\underline{v}))$,
\item $(A\rightarrow B)^D:=\exists\underline{U},\underline{Y}\forall\underline{x},\underline{v}(A\rightarrow B)_D$\\ where $(A\rightarrow B)_D(\underline{U},\underline{Y},\underline{x},\underline{v}):=A_D(\underline{x},\underline{Y}\underline{x}\underline{v})\to B_D(\underline{U}\underline{x},\underline{v})$,
\item $(\exists z^\tau A(z))^D:=\exists z,\underline{x}\forall\underline{y}(\exists z^\tau A(z))_D$\\ where $(\exists z^\tau A(z))_D(z,\underline{x},\underline{y}):=A_D(\underline{x},\underline{y},z)$,
\item $(\forall z^\tau A(z))^D:=\exists\underline{X}\forall z,\underline{y}(\forall z^\tau A(z))_D$\\ where $(\forall z^\tau A(z))_D(\underline{X},z,\underline{y}):=A_D(\underline{X}z,\underline{y},z)$.
\end{enumerate}
\end{definition}

\begin{definition}[\cite{Kur1951}]
The negative translation of $A$ is defined by $A':=\neg\neg A^*$ where $A^*$ is defined by the following recursion on the structure of $A$:
\begin{enumerate}
\item $A^*:= A$ for prime $A$;
\item $(A\circ B)^*:= A^*\circ B^*$ for $\circ\in\{\land,\lor,\rightarrow\}$;
\item $(\exists x^\tau A)^*:=\exists x^\tau A^*$;
\item $(\forall x^\tau A)^*:=\forall x^\tau \neg\neg A^*$.
\end{enumerate}
\end{definition}

For the combination of these two interpretations, the following soundness result is one of the two central technical tools in the context of the proof of the proof mining metatheorems. In that context, we define $\mathcal{C}^{\omega-}$ as the system $\mathcal{C}^\omega$ \emph{without} the schemas $\QFAC$ and $\DC$.

\begin{lemma}[essentially \cite{Koh2005}]\label{lem:ndinterpretation}
Let $\mathcal{P}$ be a set of universal sentences and let $A(\underline{a})$ be an arbitrary formula (with only the variables $\underline{a}$ free) in the language of $\mathcal{F}^\omega$. Then the rule
\[
\begin{cases}\mathcal{F}^\omega+\mathcal{P}\vdash A(\underline{a})\Rightarrow\\
\mathcal{F}^{\omega-}+(\mathrm{BR})+\mathcal{P}\vdash\forall\underline{a},\underline{y}(A')_D(\underline{t}\underline{a},\underline{y},\underline{a})\end{cases}
\]
holds where $\underline{t}$ is a tuple of closed terms of $\mathcal{F}^{\omega-}+(\mathrm{BR})$ which can be extracted from the respective proof and $(\mathrm{BR})$ is the schema of \emph{simultaneous bar-recursion} of Spector \cite{Spe1962}, here extended to all types from $T^{\Omega,S}$ (similar as in e.g.\ \cite{Koh2008}).

This result extends to any suitable extension of the language of $\mathcal{F}^\omega$ (e.g.\ by any kind of new types and constants) together with any number of additional universal axioms in that language.
\end{lemma}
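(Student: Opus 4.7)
The plan is to follow the standard Shoenfield/Kohlenbach template for such combined soundness theorems: compose Kuroda's negative translation with G\"odel's functional interpretation, extracting realising functionals by induction on the given classical derivation. Concretely, I would first apply Kuroda's translation to transform the hypothesised proof $\mathcal{F}^\omega+\mathcal{P}\vdash A(\underline{a})$ into a proof of $A'(\underline{a})$ in the intuitionistic variant of the same system, and then apply the Dialectica interpretation to this intuitionistic proof, reading off the realising terms $\underline{t}$ step by step. The interpretation of $\QFAC$ is trivial (one currifies the witnessing functionals, which works just as well for tuples involving the new types $\Omega$ and $S$), whereas the interpretation of $\DC^{\underline{\rho}}$ appeals to Spector's bar-recursion functionals, which here need to be made available uniformly for all tuples of types from $T^{\Omega,S}$.

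The observation that makes this template go through verbatim is that all axioms added over $\mathcal{A}^\omega$ in forming $\mathcal{F}^\omega$ -- namely $(\mathrm{eq})_1$, $(\mathrm{eq})_2$ and $(\in)_1$ -- $(\in)_4$ -- as well as all hypotheses in $\mathcal{P}$ are purely universal with quantifier-free matrix. Such sentences are (up to intuitionistic equivalence) invariant under Kuroda's translation and receive the trivial Dialectica interpretation $A_D\equiv A$, so each of them passes through both translations unchanged and remains available as a universal axiom in the target system $\mathcal{F}^{\omega-}+(\mathrm{BR})+\mathcal{P}$, with no witnessing functional to be produced. The abstract base types $\Omega$ and $S$ themselves play no logical role in the interpretations beyond participating as further ground types of $T^{\Omega,S}$, and the new constants $\mathrm{eq},\in,\cup,(\cdot)^c,\emptyset,c_\Omega$ are simply taken over into the target language.

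The one genuinely non-trivial point, as in \cite{Koh2005,Koh2008}, will be the handling of the quantifier-free extensionality rule $\QFER$: full extensionality $(\mathsf{E}_{\rho,\tau})$ is not interpretable because its defining matrix contains a universal higher-type quantifier, but the rule form can still be dealt with because its premise is quantifier-free, so that the interpretation of an application of $\QFER$ inside the derivation reduces to an application of $\QFER$ itself inside $\mathcal{F}^{\omega-}+(\mathrm{BR})$. This treatment is uniform in the types and is therefore insensitive to the introduction of $\Omega$ and $S$. The main thing to verify carefully is consequently only that the bar-recursion schema can be consistently adjoined at all the new types, which follows the well-established pattern of \cite{Koh2008}. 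The final clause of the lemma, asserting that the result extends to any suitable language extension together with additional universal axioms, is then immediate from the same observations: any new universal axiom is self-interpreted, and any new constant simply enlarges the term language without disturbing the inductive structure of the interpretations.
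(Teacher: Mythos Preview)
Your proposal is correct and follows exactly the approach the paper indicates: the paper in fact omits the proof entirely, stating that it is ``almost exactly the same as the proof given for the analogous soundness result in \cite{Koh2005}''. Your outline---Kuroda's negative translation followed by the Dialectica interpretation, with universal axioms (including the new ones for $\mathcal{F}^\omega$) self-interpreting, $\QFAC$ trivially realized, $\DC$ handled via Spector's bar recursion extended to $T^{\Omega,S}$, and $\QFER$ treated as in \cite{Koh2005,Koh2008}---is precisely that standard template.
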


We omit the proof as it is almost exactly the same as the proof given for the analogous soundness result in \cite{Koh2005} (although this result from \cite{Koh2005} is of course not formulated for the system $\mathcal{F}^\omega$).\\

Besides the soundness of the Dialectica interpretation (together with the negative translation), the other one of the two central tools utilized in the metatheorems is that of majorizability. Originally introduced by Howard \cite{How1973}, the notion of majorizable functionals was later extended by Bezem \cite{Bez1985} to that of strongly majorizable functionals to provide a model for finite type arithmetic extended by the schema of bar recursion discussed before. In that way, this model of strongly majorizable functionals provides the crucial basis for proof mining metatheorems of systems allowing for dependent choice. In the context of the abstract types, we further need to consider an extension of this notion of strongly majorizable functionals to these new types. The first such extensions to abstract types have been devised in \cite{GeK2008,Koh2005}. However, in this setting (and essentially in all other settings for metatheorems proved afterwards), this extension was motivated and based on the metric structure assumed for the respective classes of spaces that were treated. We thus find ourselves here at a ``fork in the road'', so to say, where we have to extend the notion of majorizability sensibly to our types $\Omega$ and $S$, both representing spaces which do not carry any metric structure. The key insight, already mentioned and motivated throughout the previous sections many times (e.g.\ in the context of the admissibility of the axioms containing existential quantifiers over variables of types $\Omega$, $S$ or $S(0)$, etc.) is to 
\begin{enumerate}
\item majorize objects $A^S$ by natural numbers bounding the measure of $A$, 
\item majorize objects $x^\Omega$ by natural numbers bounding the measure of the full set $\Omega$ in $S$.
\end{enumerate}
In the case of a probability measure, any object of type $\Omega$ or $S$ is therefore uniformly majorized by $1$ but with the phrasing of (1) and (2), we wanted to highlight the general idea of this approach as it might be feasible also for more general finite contents.\\

In any way, similar to \cite{GeK2008,Koh2005}, the majorants for objects with types from $T^{\Omega,S}$ are objects with types from T according to the following extended projection:

\begin{definition}[essentially \cite{GeK2008}]
Define $\widehat{\tau}\in T$, given $\tau\in T^{\Omega,S}$, by recursion on the structure via
\[
\widehat{0}:=0,\;\widehat{\Omega}:=0,\;\widehat{S}:=0,\;\widehat{\tau(\xi)}:=\widehat{\tau}(\widehat{\xi}).
\]
\end{definition}

The majorizability relation $\gtrsim$ is then defined in tandem with the structure of all strongly majorizable functionals.

\begin{definition}[essentially \cite{GeK2008,Koh2005}]
Let $\Omega$ be a non-empty set, $S\subseteq 2^\Omega$ be an algebra and $\PP$ be a probability content on $S$. The structure $\mathcal{M}^{\omega,\Omega,S}$ and the majorizability relation $\gtrsim_\rho$ are defined by
\[
\begin{cases}
\mathcal{M}_0:=\mathbb{N}, n\gtrsim_0 m:=n\geq m\land n,m\in\mathbb{N},\\
\mathcal{M}_\Omega:= \Omega, n\gtrsim_\Omega x:= n\geq \PP(\Omega)\land n\in \mathcal{M}_0,x\in \mathcal{M}_\Omega,\\
\mathcal{M}_{S}:= S, n\gtrsim_{S} A:= n\geq \PP(A)\land n\in \mathcal{M}_0,A\in \mathcal{M}_{S},\\
f\gtrsim_{\tau(\xi)}x:=f\in \mathcal{M}_{\widehat{\tau}}^{\mathcal{M}_{\widehat{\xi}}}\land x\in \mathcal{M}_\tau^{\mathcal{M}_\xi}\\
\qquad\qquad\qquad\land\forall g\in \mathcal{M}_{\widehat{\xi}},y\in \mathcal{M}_\xi(g\gtrsim_\xi y\rightarrow fg\gtrsim_\tau xy)\\
\qquad\qquad\qquad\land\forall g,y\in \mathcal{M}_{\widehat{\xi}}(g\gtrsim_{\widehat{\xi}}y\rightarrow fg\gtrsim_{\widehat{\tau}}fy),\\
\mathcal{M}_{\tau(\xi)}:=\left\{x\in \mathcal{M}_\tau^{\mathcal{M}_\xi}\mid \exists f\in \mathcal{M}^{\mathcal{M}_{\widehat{\xi}}}_{\widehat{\tau}}:f\gtrsim_{\tau(\xi)}x\right\}.
\end{cases}
\]
\end{definition}

So, as already discussed previously, though only informally, as $\PP$ is a probability content, we have $1\gtrsim_S A$ for any $A\in S$ (as $\PP(A)\leq\PP(\Omega)=1$) as well as $1\gtrsim_\Omega x$ for any $x\in \Omega$ (but in the way the model is defined above, the definition immediately makes sense in the context of general finite contents).

Before we move on further, we now just quickly note that majorizability behaves nicely w.r.t.\ functions with multiple arguments as represented by their curryied variants.

\begin{lemma}[\cite{GeK2008,Koh2005}, see also Kohlenbach {\cite[Lemma 17.80]{Koh2008}}]\label{lem:majlemma}
Let $\xi=\tau(\xi_k)\dots(\xi_1)$. For $x^*:\mathcal{M}_{\widehat{\xi_1}}\to(\mathcal{M}_{\widehat{\xi_2}}\to\dots\to \mathcal{M}_{\widehat{\tau}})\dots)$ and $x:\mathcal{M}_{\xi_1}\to (\mathcal{M}_{\xi_2}\to\dots\to \mathcal{M}_\tau)\dots)$, we have $x^*\gtrsim_\xi x$ iff
\begin{enumerate}
\item[(a)] $\forall y_1^*,y_1,\dots,y_k^*,y_k\left(\bigwedge_{i=1}^k(y^*_i\gtrsim_{\xi_i}y_i)\rightarrow x^*y^*_1\dots y^*_k\gtrsim_\tau xy_1\dots y_k\right)$ and 
\item[(b)] $\forall y_1^*,y_1,\dots,y_k^*,y_k\left(\bigwedge_{i=1}^k(y^*_i\gtrsim_{\widehat{\xi_i}}y_i)\rightarrow x^*y^*_1\dots y^*_k\gtrsim_{\widehat{\tau}} x^*y_1\dots y_k\right)$.
\end{enumerate}
\end{lemma}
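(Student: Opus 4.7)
The plan is to prove the claim by induction on the number of arguments $k$, which mimics closely the proof of the analogous Lemma 17.80 in \cite{Koh2008}. The base case $k=1$ is immediate: here $\xi = \tau(\xi_1)$ and the two conjuncts (a) and (b) are literally the two defining clauses of $\gtrsim_{\tau(\xi_1)}$ from the definition of $\mathcal{M}^{\omega,\Omega,S}$.

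For the inductive step, write $\xi = \xi'(\xi_1)$ where $\xi' = \tau(\xi_k)\dots(\xi_2)$ has $k-1$ arguments. Unfolding the definition of $\gtrsim_{\xi'(\xi_1)}$, the assertion $x^* \gtrsim_\xi x$ is the conjunction of
\[
(\star) \quad \forall g,y\,(g \gtrsim_{\xi_1} y \to x^*g \gtrsim_{\xi'} xy), \qquad (\star\star) \quad \forall g,y\,(g \gtrsim_{\widehat{\xi_1}} y \to x^*g \gtrsim_{\widehat{\xi'}} x^*y).
\]
I would then apply the induction hypothesis twice: once to $(\star)$ at type $\xi'$ with the pair $(x^*g, xy)$ playing the roles of $(x^*,x)$, and once to $(\star\star)$ at type $\widehat{\xi'}$ with the pair $(x^*g, x^*y)$ in the corresponding roles. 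Using the fact that $\widehat{\widehat{\rho}} = \widehat{\rho}$ for every $\rho \in T^{\Omega,S}$, the ``(b)-type'' conclusion coming out of unpacking $(\star\star)$ coincides with the ``(b)-type'' conclusion coming out of unpacking $(\star)$ (both just demand the monotonicity of $x^*g$ in the remaining arguments at hat-types). Relabeling $y_1^* := g$, $y_1 := y$, the ``(a)-type'' conclusion obtained from $(\star)$ is exactly condition (a) of the lemma, and the ``(a)-type'' conclusion from $(\star\star)$ is exactly condition (b).

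For the converse direction, I need to check that (a) and (b) together entail the remaining ``(b)-type'' condition from unpacking $(\star)$, namely the monotonicity of $x^*g$ in $y_2,\dots,y_k$ under the assumption $g \gtrsim_{\xi_1} y$. This follows by specializing (b) to $y_1^* = y_1 = g$, provided $g \gtrsim_{\widehat{\xi_1}} g$. I would dispatch this self-majorization claim by a short side-induction on $\xi_1 \in T^{\Omega,S}$: the cases $\xi_1 \in \{0,\Omega, S\}$ are immediate since $g$ is then a natural number and $\gtrsim_0$ is reflexive, and the functional-type case reduces directly to the second clause in the definition of $\gtrsim_{\tau(\xi)}$, which is exactly the assertion $g \gtrsim_{\widehat{\tau(\xi)}} g$ (using again $\widehat{\widehat{\cdot}} = \widehat{\cdot}$).

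I expect no genuine mathematical obstacle; the proof is really just a bookkeeping exercise ensuring that every occurrence of $\gtrsim$ is tracked at the correct type and that the two applications of the inductive hypothesis reassemble correctly into the uncurried statement. The only mildly subtle point is the observation that the two ``monotonicity-at-hat-type'' conclusions coming from $(\star)$ and $(\star\star)$ are interderivable (modulo the self-majorization lemma), so that (b) does not need to be split into two separate clauses in the statement. This is exactly the delicate step in the standard proof, and the adaptation to our setting is painless because the hat-operation on $T^{\Omega,S}$ still sends the new base types $\Omega$ and $S$ to $0$ and thus inherits all of the formal properties used in the pure-type case treated in \cite{GeK2008,Koh2005,Koh2008}.
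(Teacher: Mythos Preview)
The paper does not supply its own proof of this lemma; it is stated with attribution to \cite{GeK2008,Koh2005} and \cite[Lemma 17.80]{Koh2008} and then used without further argument. Your proposal reproduces precisely the standard inductive argument from those references, adapted to the extended type structure $T^{\Omega,S}$, so there is nothing to compare against and your approach is the expected one.

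One small remark on presentation: your ``side-induction'' for the self-majorization $g \gtrsim_{\widehat{\xi_1}} g$ is not really a free-standing induction but rather uses the ambient hypothesis $g \gtrsim_{\xi_1} y$ (available in the context of verifying $(\star)$). At a function type $\xi_1 = \rho(\sigma)$, the second clause in the definition of $g \gtrsim_{\xi_1} y$ already asserts $\forall h,z\,(h \gtrsim_{\widehat{\sigma}} z \to gh \gtrsim_{\widehat{\rho}} gz)$, and since $\widehat{\widehat{\cdot}} = \widehat{\cdot}$ this is exactly both clauses needed for $g \gtrsim_{\widehat{\xi_1}} g$. So the claim follows directly from the hypothesis rather than by a separate induction; your argument is correct in substance but the phrasing slightly obscures where the needed information comes from.
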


The other main structure featuring in the metatheorems is the structure of all set-theoretic functionals $\mathcal{S}^{\omega,\Omega,S}$, defined via $\mathcal{S}_0:=\mathbb{N}$, $\mathcal{S}_\Omega:= \Omega$, $\mathcal{S}_{S}:=S$ and
\[
\mathcal{S}_{\tau(\xi)}:=\mathcal{S}_{\tau}^{\mathcal{S}_{\xi}}.
\]
Both structures $\mathcal{S}^{\omega,\Omega,S}$ and $\mathcal{M}^{\omega,\Omega,S}$ later turn into models of our systems if equipped with corresponding interpretations for the respective additional constants, with $\mathcal{S}^{\omega,\Omega,S}$ serving as the structure for the intended standard models.\\

The proof of the bound extraction theorems now follows the following general high-level outline of most other such metatheorems: using functional interpretation and negative translation, one extracts realizers from (essentially) $\forall\exists$-theorems. These realizers have types from $T^{\Omega,S}$. We then use majorizability to construct bounds for these realizers, depending only on majorants of the parameters, which are validated in a model based on $\mathcal{M}^{\omega,\Omega,S}$. In a final step, we can then recover to the truth in a model based on the usual full set-theoretic structure $\mathcal{S}^{\omega,\Omega,S}$ if the types occurring in the axioms and the theorem are ``low enough'', which we will call admissible. Concretely, following \cite{GeK2008,Koh2005}, we introduce the following specific classes of types: We call a type $\xi$ \emph{of degree $n$} if $\xi\in T$ and it has degree $\leq n$ in the usual sense (see e.g.\ \cite{Koh2008}). Further we call $\xi$ \emph{small} if it is of the form $\xi=\xi_0(0)\dots(0)$ for $\xi_0\in\{0,\Omega,S\}$ (including $0,\Omega,S$) and call it \emph{admissible} if it is of the form $\xi=\xi_0(\tau_k)\dots(\tau_1)$ where each $\tau_i$ is small and $\xi_0\in\{0,\Omega,S\}$ (also including $0,\Omega,S$).\\

Further, also in analogy to \cite{GeK2008,Koh2005}, we define certain subclasses of formulas satisfying certain type restrictions: A formula $A$ is called a $\forall$-formula if $A=\forall\underline{a}^{\underline{\xi}} A_{qf}(\underline{a})$ with $A_{qf}$ quantifier-free and all types $\xi_i$ in $\underline{\xi}=(\xi_1,\dots,\xi_k)$ are admissible. A formula $A$ is called an $\exists$-formula if $A=\exists\underline{a}^{\underline{\xi}}A_{qf}(\underline{a})$ with similar $A_{qf}$ and $\underline{\xi}$.\\

The class $\Delta$ already mentioned previously, which was originally introduced in \cite{Koh1990,Koh1992b} (and then lifted to abstract types in \cite{GuK2016}) and signifies a class of commonly occurring formulas with trivial monotone functional interpretations, is now similarly introduced in the context of the systems studied in this paper: a formula of type $\Delta$ is any formula of the form
\[
\forall\underline{a}^{\underline{\delta}}\exists\underline{b}\leq_{\underline{\sigma}}\underline{r}\underline{a}\forall\underline{c}^{\underline{\gamma}}A_{qf}(\underline{a},\underline{b},\underline{c})
\]
where $A_{qf}$ is quantifier-free, the types in $\underline{\delta}$, $\underline{\sigma}$ and $\underline{\gamma}$ are admissible, $\underline{r}$ is a tuple of closed terms of appropriate type, $\leq$ is defined by recursion on the type via
\begin{enumerate}
\item $x\leq_0 y:=x\leq_0 y$,
\item $x\leq_\Omega y:=\PP(\Omega)\leq_\mathbb{R}\PP(\Omega)$,
\item $A\leq_{S} B:=\PP(A)\leq_\mathbb{R}\PP(B)$,
\item $x\leq_{\tau(\xi)} y:=\forall z^\xi(xz\leq_\tau yz)$,
\end{enumerate}
and $\underline{x}\leq_{\underline{\sigma}}\underline{y}$ is an abbreviation for $x_1\leq_{\sigma_1}y_1\land\dots\land x_k\leq_{\sigma_k}y_k$ where $\underline{x}$, $\underline{y}$ and $\underline{\sigma}$ are $k$-tuples of terms and types, respectively, such that $x_i$ and $y_i$ are of type $\sigma_i$.

Given a set $\beth$ of formulas of type $\Delta$, we write $\widetilde{\beth}$ for the set of all Skolem normal forms
\[
\exists\underline{B}\leq_{\underline{\sigma}(\underline{\delta})}\underline{r}\forall\underline{a}^{\underline{\delta}}\forall\underline{c}^{\underline{\gamma}}A_{qf}(\underline{a},\underline{B}\underline{a},\underline{c})
\]
for any $\forall\underline{a}^{\underline{\delta}}\exists\underline{b}\leq_{\underline{\sigma}}\underline{r}\underline{a}\forall\underline{c}^{\underline{\gamma}}A_{qf}(\underline{a},\underline{b},\underline{c})$ in $\beth$.

\begin{remark}
We want to note shortly that all axioms that were previously discussed as admissible based on our extended notion of majorizability can actually be seen as statements of type $\Delta$ in the context of the above definition. At first, the axiom $(\PP)_4$ can be equivalently rewritten as 
\[
\forall A^S,B^S\exists x^\Omega\leq_\Omega c_\Omega(\PP(A)>_\mathbb{R}\PP(B)\to \left( x\in A \land x\not\in B\right))
\]
and is thus immediately of type $\Delta$.\footnote{Note the importance of the constant $c_\Omega$ witnessing the non-emptiness of $\Omega$ for writing $(\PP)_4$ in that way.} Second, also the axiom $(I)_3$ can be rewritten with the additional boundedness information via
\begin{align*}
&\forall f^{1(\Omega)},a^1,b^1\exists A^S\leq_S \Omega\forall x^\Omega\left(f\in I\to \left(x\in f^{-1}([a,b])\leftrightarrow x\in A\right)\right)
\end{align*}
and thus is of type $\Delta$. Similarly, also the axiom $(I)_5$ can be rewritten as an axioms of type $\Delta$ as
\[
\forall f^{1(\Omega)},k^0\exists x^\Omega\leq_\Omega c_\Omega\left( f\in I\to \left( \norm{f}_\infty-2^{-k}\leq_\mathbb{R} \vert f(x)\vert\right)\right).
\]
Lastly, also the integrability axioms $(\int)_4$ and $(\int)_3$ can be rewritten as
\[
\forall f^{1(\Omega)}, k^0\exists x^\Omega\leq_\Omega c_\Omega\left( f\in I\land \INT{f}<_\mathbb{R} -2^{-k}\to \left( f(x)\leq_\mathbb{R} -2^{-k}\right)\right)
\]
and 
\begin{gather*}
\forall f^{1(\Omega)}, k^0\exists A^{S(0)}\leq_{S(0)}\lambda n^0.\Omega\\
\left(f\in I\to \INT{\left\vert f-\sum_{i=0}^{2^{k+1}b_f-1}\left(-([\norm{f}_\infty](0)+1)+\frac{i}{2^{k}}\right)\chi_{A(i)}\right\vert}\leq_\mathbb{R} 2^{-k}\right),
\end{gather*}
respectively, with $b_f:=[\norm{f}_\infty](0)+1$ as before, which turns them into axioms of type $\Delta$.
\end{remark}

Crucially, axioms of type $\Delta$ are trivialized under the monotone functional interpretation\footnote{While the interpretation was introduced under this name in \cite{Koh1996}, the idea of combining the Dialectica interpretation and majorization is already due to \cite{Koh1992b}.} and we treat any axiom of type $\Delta$ in $\mathcal{C}^\omega$ (or any suitable extension) ``in this spirit''. We here only write ``in this spirit'' as we actually do not use a monotone variant of the Dialectica interpretation but treat the functional interpretation part and the majorization part of the combined interpretation separately. In that way, we follow the approach given in \cite{GuK2016} (see also the recent \cite{Pis2023}) and treat axioms of type $\Delta$ by employing a construction that converts a theory with axioms of such a type into a theory using only additional purely universal axioms formulated using the Skolem functions of these axioms. This new theory is then used in combination with the functional interpretation to extract the respective terms and then the proof proceeds as outlined before.

Concretely, we now proceed as follows: Let $\beth$ be a set of axioms of type $\Delta$ and write $\widehat{\mathcal{C}}^\omega$ for $\mathcal{C}^\omega$ without any of its axioms of type $\Delta$. Then, we form a new theory $\overline{\mathcal{C}}^\omega_\beth$ from $\widehat{\mathcal{C}}^\omega$ by adding the Skolem functionals $\underline{B}$ of any axiom of type $\Delta$ of $\mathcal{C}^\omega+\beth$, say of the form
\[
\forall\underline{a}^{\underline{\delta}}\exists\underline{b}\leq_{\underline{\sigma}}\underline{r}\underline{a}\forall\underline{c}^{\underline{\gamma}}A_{qf}(\underline{a},\underline{b},\underline{c}),
\]
as new constants to the language and simultaneously adding the corresponding ``instantiated Skolem normal form'', i.e.
\[
\underline{B}\leq_{\underline{\sigma}(\underline{\delta})}\underline{r}\land \forall\underline{a}^{\underline{\delta}}\forall\underline{c}^{\underline{\gamma}}A_{qf}(\underline{a},\underline{B}\underline{a},\underline{c}),
\]
as a new axiom. Therefore, the system $\overline{\mathcal{C}}^\omega_\beth$ only extends $\mathcal{F}^\omega$ by new types, constants and universal axioms. Therefore, as mentioned before, Lemma \ref{lem:ndinterpretation} also applies to this system where the conclusion is then proved in $\overline{\mathcal{C}}^{\omega-}_\beth+(\mathrm{BR})$, i.e.\ $\overline{\mathcal{C}}^{\omega}_\beth$ with the principles $\QFAC$ and $\DC$ removed and where the scheme of simultaneous bar-recursion is added.

In the case where the extension $\mathcal{C}^\omega$ contains the rule $(\bigcup)_2$, we for simplicity assume that in the process of forming the extended theory, this rule is also removed in the sense that $\overline{\mathcal{C}}^{\omega}_\beth$ does not contain the rule and for any provable premise
\[
\mathcal{C}^\omega+\beth\vdash F_{qf}\to \forall n^0\left( A(n)\subseteq_SB\right),
\]
we add the corresponding conclusion
\[
F_{qf}\to \bigcup A\subseteq_S B
\]
as an axiom of $\overline{\mathcal{C}}^{\omega}_\beth$.\\

We now move on to the central result on the majorization part of the chosen approach to the bound extraction theorems, stating that every closed term in the underlying language of the system in question is majorizable. As such, the result is similar to Lemma 9.11 in \cite{GeK2008}.

\begin{lemma}\label{lem:majmainresult}
Let $\mathcal{C}^\omega$ be (one of the previously discussed extensions of) the system $\mathcal{F}^\omega[\PP]$ and let $\beth$ be a set of additional axioms of type $\Delta$. Let $\Omega$ be a non-empty set and let $S\subseteq 2^\Omega$ be an algebra (or, in the context of the constant $\bigcup$, a $\sigma$-algebra). Let $\PP$ be a probability content on $S$. Then $\mathcal{M}^{\omega,\Omega,S}$ is a model of $\overline{\mathcal{C}}^{\omega-}_\beth+(\mathrm{BR})$, provided $\mathcal{S}^{\omega,\Omega,S}\models \beth$ (with $\mathcal{M}^{\omega,\Omega,S}$ and $\mathcal{S}^{\omega,\Omega,S}$ defined via suitable interpretations of the additional constants in $\mathcal{C}^\omega$). Moreover, for any closed term $t$ of $\overline{\mathcal{C}}^{\omega-}_\beth+(\mathrm{BR})$, one can construct a closed term $t^*$ of $\mathcal{A}^\omega+(\mathrm{BR})$ such that
\[
\mathcal{M}^{\omega,\Omega,S}\models\left( t^*\gtrsim t\right).
\]
\end{lemma}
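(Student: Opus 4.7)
The plan is to closely follow the now-standard strategy for bound-extraction theorems in the context of abstract types, as set up in \cite{Koh2005,GeK2008}, adapting it to the new types $\Omega,S$ and the new constants of this paper. The proof splits into two parts: (i) verifying that, equipped with appropriate interpretations of the new constants, $\mathcal{M}^{\omega,\Omega,S}$ is a model of $\overline{\mathcal{C}}^{\omega-}_\beth+(\mathrm{BR})$, and (ii) constructing by induction on the term structure a closed majorant $t^*\in\mathcal{A}^\omega+(\mathrm{BR})$ for every closed term $t$.

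For (i), one equips both $\mathcal{S}^{\omega,\Omega,S}$ and $\mathcal{M}^{\omega,\Omega,S}$ with the canonical interpretations of the new constants coming from the ambient set-theoretic structure: $\mathrm{eq},\in$ as the relevant characteristic functions of equality and element-hood; $\cup,(\cdot)^c,\emptyset,c_\Omega$ as the obvious set-theoretic operations on $S\subseteq 2^\Omega$; $\PP$ (represented type-$1$-wise via the $(\cdot)_\circ$ map) as the given probability content; $\bigcup$ (where applicable) as countable union in the ambient $\sigma$-algebra; $[a,b]$ as the characteristic function of the corresponding closed interval; $(\cdot)^{-1}$ as the preimage operator; and $I,\norm{\cdot}_\infty,\INT{\cdot}$ via a chosen ambient space of bounded and weakly Borel-measurable functions together with the sup-norm and the D-integral of \cite{RR1983}. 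The purely universal axioms of $\mathcal{C}^\omega$ and the universal instances of $(\bigcup)_2$ (which, in forming $\overline{\mathcal{C}}^\omega_\beth$, have already been converted into axioms) hold in $\mathcal{S}^{\omega,\Omega,S}$ by design and transfer to $\mathcal{M}^{\omega,\Omega,S}$ by downward preservation, using that $\mathcal{M}_\Omega=\mathcal{S}_\Omega$ and $\mathcal{M}_S=\mathcal{S}_S$. The Skolemized forms of the axioms in $\beth$ are witnessed in $\mathcal{S}^{\omega,\Omega,S}$ using choice, and the resulting Skolem functionals respect the uniform bounds $\underline{r}$, which (as part of (ii) below) guarantees their membership in $\mathcal{M}^{\omega,\Omega,S}$. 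That $\mathcal{M}^{\omega,\Omega,S}$ admits bar recursion follows from Bezem's construction \cite{Bez1985}, routinely extended to $T^{\Omega,S}$ using $\widehat{\Omega}=\widehat{S}=0$.

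For (ii), the pivotal observation is that, since $\PP(\Omega)=1$, every $x\in\mathcal{M}_\Omega$ and every $A\in\mathcal{M}_S$ is uniformly majorized by $1$, so that most of the new constants admit trivial constant majorants: $\emptyset^*:=0$ and $c_\Omega^*:=1$, while $\mathrm{eq},\in,\cup,(\cdot)^c,\bigcup,[\cdot,\cdot],(\cdot)^{-1}$ and $I$ (whose outputs land in types bounded by $1$ at $\Omega$, $S$ or the Boolean type $0$) can all be majorized by $\lambda\underline{n}.1$ at the appropriate arity. For $\PP$, bounded by the real $1$, one takes $\PP^*:=\lambda n^0.(1)_\circ$, which majorizes any $(\PP(A))_\circ$ by Lemma \ref{lem:circprop}(2). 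For $\norm{\cdot}_\infty$ and $\INT{\cdot}$, given any $f^*\gtrsim_{1(\Omega)}f$ one has $|f(x)|\leq f^*(1)(0)+1$ as real numbers for all $x$ (since $1\gtrsim_\Omega x$), so $\norm{f}_\infty$ and $|\INT{f}|$ are both bounded by the real $f^*(1)(0)+1$, giving $\norm{\cdot}_\infty^*:=\lambda f^*.(f^*(1)(0)+1)_\circ$ and the analogous majorant for the integral. Finally, the Skolem constants attached to axioms in $\beth$ are majorized by the closed terms $\underline{r}$ from their declarations. The general case then follows by the usual induction on term structure, using the known majorizability of the combinators, the type-$0$ constants, the primitive recursors $\underline{R}$ and the bar-recursors, all extended routinely to $T^{\Omega,S}$ as in \cite{GeK2008}.

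The main technical burden is the verification, for each of the new majorants $c^*$, of both clauses of Lemma \ref{lem:majlemma}: the mixed monotonicity clause (that $c^*$ majorizes $c$) and the self-monotonicity clause (that $c^*$ is monotone in its own arguments). For the constant majorants $\lambda\underline{n}.1$ this is immediate; for $\PP^*$, $\norm{\cdot}_\infty^*$ and the integral majorant it reduces to the monotonicity of $(\cdot)_\circ$ in its real argument (Lemma \ref{lem:circprop}(2)--(3)) together with the elementary monotonicity of $f^*\mapsto f^*(1)(0)$. A further point needing care is that the Skolem functions for those $\Delta$-axioms whose existential quantifier carries a compound type such as $S(0)$ (as in the integrability axiom $(\int)_3$) must be majorized by $\lambda n^0.1$ at type $S(0)$; but this again reduces to the uniform bound $\PP(A)\leq 1$ and poses no genuine difficulty once the above uniform-majorization perspective on $\Omega$ and $S$ is in place.
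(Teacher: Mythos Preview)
Your proposal is correct and follows essentially the same template as the paper's proof, namely the standard \cite{Koh2005,GeK2008} approach of defining canonical interpretations of the new constants and then exhibiting majorants for each. Two minor deviations are worth noting: (a) for $\PP$ you take the constant majorant $\lambda n.(1)_\circ$, whereas the paper uses the input-dependent $\lambda x.(x)_\circ$; both work here since $\PP$ is a probability content, though the paper's choice generalizes immediately to arbitrary finite contents; (b) for the $\Delta$-axioms you Skolemize directly in $\mathcal{S}^{\omega,\Omega,S}$ and then argue that the bounded Skolem functions land in $\mathcal{M}^{\omega,\Omega,S}$, while the paper first transfers $\beth$ to $\mathcal{M}^{\omega,\Omega,S}$ and then applies $\mathsf{b}\text{-}\mathsf{AC}_{\Omega,S}$ inside $\mathcal{M}^{\omega,\Omega,S}$---your route is slightly more direct but tacitly uses the property $(\dagger)$ (that $x'\gtrsim x\land x\geq y\to x'\gtrsim y$), and your phrase ``majorized by the closed terms $\underline{r}$'' should strictly read ``majorized by majorants $\underline{r}^*$ of $\underline{r}$'', since the $\underline{r}$ may involve the new constants and hence are not themselves $\mathcal{A}^\omega$-terms.
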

\begin{proof}
The structure of the proof is standard and similar to proofs of related results from the literature (see e.g.\ \cite{Koh2008}). As such, we only discuss the interpretations of the new constants added to $\mathcal{A}^\omega$ to form the respective theories as well as their majorizations. For the constants already contained in $\mathcal{A}^\omega$, we may choose suitable interpretations as in \cite{Koh2008} and for majorizing a composition of terms, we may similarly proceed as outlined therein. For that, we now first focus on $\mathcal{F}^\omega[\PP]$ and assume that there are no further axioms of type $\Delta$ beyond those already contained in $\mathcal{F}^\omega[\PP]$. For any $\mathcal{C}^\omega$, we deal with any set $\beth$ of additional axioms of type $\Delta$ and the respectively induced constants later on by moving to the theory $\overline{\mathcal{C}}^\omega_\beth$.\\

Now, for the new constants added to $\mathcal{A}^\omega$ to form $\mathcal{F}^\omega[\PP]$, we consider the following interpretations (writing $\mathcal{M}$ for $\mathcal{M}^{\omega,\Omega,S}$): 
\begin{itemize}
\item $[\mathrm{eq}]_{\mathcal{M}}:=\text{the characteristic function of the equaliy relation in }\Omega$;
\item $[\in]_{\mathcal{M}}:=\text{the characteristic function of the element relation in }S$;
\item $[\cup]_\mathcal{M}:=\text{union in }S$;
\item $[(\cdot)^c]_\mathcal{M}:=\text{complement in }S$;
\item $[\emptyset]_\mathcal{M}:=\text{the empty set in }S$;
\item $[\PP]_\mathcal{M}:= \lambda A^S.(\PP(A))_\circ$ where $\PP$ is the content fixed in the context of this lemma.
\end{itemize}

This is only well-defined in $\mathcal{M}^{\omega,\Omega,S}$ if we can construct majorants of these objects. This we can do as follows:
\begin{itemize}
\item $\lambda x^0,y^0.1\gtrsim\mathrm{eq}$;
\item $\lambda x^0,y^0.1\gtrsim\, \in$;
\item $\lambda x^0,y^0.1\gtrsim \cup$;
\item $\lambda x^0. 1\gtrsim (\cdot)^c$;
\item $0\gtrsim \emptyset$;
\item $\lambda x^0.(x)_\circ\gtrsim \PP$.
\end{itemize}
Note that in the last item, the operation $(x)_\circ$ is definable in $\mathcal{A}^\omega$ via a closed term as $x$ is of type $0$.

For justifying that those terms really are majorants of the respective constants, we argue as follows: The first four items immediately follow from the fact that $\PP(A)\leq \PP(\Omega)=1$ (i.e.\ that $\PP$ is a probability content) and that $\PP(\emptyset)=_\mathbb{R}0$. The last item follows immediately from Lemma \ref{lem:circprop} as clearly, if $x\geq_\mathbb{R}\PP(A)$, then $(x)_\circ\gtrsim (\PP(A))_\circ$.\\

In the case where $\mathcal{C}^\omega$ contains the respective additional constant $\bigcup$, a corresponding interpretation is naturally defined by
\begin{itemize}
\item $[\bigcup]_\mathcal{M}:=\text{countably infinite union in S}$,
\end{itemize}
which is well-defined since we in this context assume that $S$ is a $\sigma$-algebra. We can achieve majorization as before by exploiting that $\PP$ is a finite content with
\begin{itemize}
\item $\lambda f^{0(0)}.1\gtrsim\bigcup$.
\end{itemize}

\medskip

Lastly, if the system $\mathcal{C}^\omega$ contains the respective constants and axioms for treating integrals, we choose corresponding interpretations of the additional constants as follows:
\begin{itemize}
\item $\big[[\cdot,\cdot]\big]_\mathcal{M}:=\lambda a^1,b^1,x^1.\begin{cases}0&\text{if }r_x\in [r_a,r_b];\\1&\text{otherwise};\end{cases}$
\item $[(\cdot)^{-1}]_\mathcal{M}:=\lambda f^{1(\Omega)},A^{0(1)},x^\Omega.\begin{cases}0&\text{if }x\in f^{-1}(\{r_a\mid a^1: A(a)=_00\});\\1&\text{otherwise};\end{cases}$
\item $[I]_\mathcal{M}:=$ the characteristic function of a set of bounded and weakly Borel-measurable functions $f^{1(\Omega)}$ which is closed under linear combinations, multiplication with characteristic functions and absolute values;
\item $[\norm{\cdot}_\infty]_\mathcal{M}:=\lambda f^{1(\Omega)}.\begin{cases}(\sup_{x\in \Omega}\vert f(x)\vert )_\circ&\text{if }f\text{ is bounded};\\0&\text{otherwise};\end{cases}$
\item $[\INT{\cdot}]_\mathcal{M}:=\lambda f^{1(\Omega)}.\begin{cases}(\INT{f})_\circ&\text{if }f\text{ is bounded and weakly Borel-measurable};\\0&\text{otherwise};\end{cases}$\\
where the latter $\INT{f}$ represents the usual integral defined over the content.
\end{itemize}

Note that here, we now rely on the extended operator $(\cdot)_\circ$ operating on all real numbers as the integral of a general integrable function may be negative. As for majorization, we rely on the following constructions:
\begin{itemize}
\item $\lambda a^1,b^1,r^1.1\gtrsim [\cdot,\cdot]$;
\item $\lambda f^{1(0)},A^{0(1)},x^0.1\gtrsim (\cdot)^{-1}$;
\item $\lambda f^{1(0)}.1\gtrsim I$;
\item $\lambda f^{1(0)}.(f(1)(0)+1)_\circ\gtrsim \norm{\cdot}_\infty$;
\item $\lambda f^{1(0)}.(f(1)(0)+1)_\circ\gtrsim \INT{\cdot}$.
\end{itemize}
The first three items are immediate as we deal with characteristic functions. For the fourth, note that if ${f^*}^{1(0)}\gtrsim f^{1(\Omega)}$, then
\[
\forall n^0,x^\Omega\left( n\gtrsim x\to f^*(n)\gtrsim f(x)\right)
\]
and as $1\gtrsim x$ for any $x^\Omega$ as $1=\PP(\Omega)$, we have $f^*(1)\gtrsim f(x)$ for any $x^\Omega$. Therefore, we have $f^*(1)(0)+1\geq_\mathbb{R} f(x)$ for any $x^\Omega$ and thus $f^*(1)(0)+1\geq_\mathbb{R} \norm{f}_\infty$ so that the result follows from Lemma \ref{lem:circprop}. 

Lastly, note that for any bounded and weakly Borel-measurable function $f$, we have that $\vert\INT{f}\vert\leq_\mathbb{R}\INT{\vert f\vert}$ so that
\[
\left\vert \INT{f}\right\vert\leq_\mathbb{R}\INT{\vert f\vert}\leq_\mathbb{R} \norm{\vert f\vert}_\infty=_\mathbb{R}\norm{f}_\infty\leq_\mathbb{R} f^*(1)(0)+1
\]
as before. The majorizability result then follows again from Lemma \ref{lem:circprop}.\\

That $\mathcal{M}^{\omega,\Omega,S}$ with these chosen interpretations is a model of $\mathcal{C}^{\omega-}+(\mathrm{BR})$ can be shown similarly as in analogous results (see e.g.\ \cite{Koh2008}). The intended interpretations of the constants of $\mathcal{C}^\omega$ in $\mathcal{S}^{\omega,\Omega,S}$, turning $\mathcal{S}^{\omega,\Omega,S}$ into a model of these systems, are defined in analogy to the corresponding model $\mathcal{M}^{\omega,\Omega,S}$ defined above.\\

For treating the other additional axioms in $\mathcal{C}^\omega+\beth$ of type $\Delta$ beyond the axioms already contained in $\mathcal{C}^\omega$, we rely on the following argument (akin to \cite{GuK2016}, Lemma 5.11) showing that $\mathcal{S}^{\omega,\Omega,S}\models \beth$ implies $\mathcal{M}^{\omega,\Omega,S}\models\widetilde{\beth}$. For this, the proof given in \cite{GuK2016} for Lemma 5.11 carries over which we sketch here: While $\mathcal{M}^{\omega,\Omega,S}$ in general is not a model of the axiom of choice \cite{Koh1992a}, one can show (similar to \cite{Koh1992a}) that $\mathcal{M}^{\omega,\Omega,S}\models \mathsf{b}\text{-}\mathsf{AC}_{\Omega,S}$ where
\[
\mathsf{b}\text{-}\mathsf{AC}_{\Omega,S}:=\bigcup_{\delta,\rho\in T^{\Omega,S}}\mathsf{b}\text{-}\mathsf{AC}^{\delta,\rho}
\]
with
\[
\mathsf{b}\text{-}\mathsf{AC}^{\delta,\rho}:=\forall Z^{\rho(\delta)}\left( \forall x^\delta\exists y\leq_\rho Zx A(x,y,Z)\to \exists Y\leq_{\rho(\delta)} Z\forall x^\delta A(x,Yx,Z)\right).
\]
Now, for small types $\rho$, we have $M_\rho=S_\rho$ while for admissible types $\rho$, we have $M_\rho\subseteq S_\rho$ (for which it is important that admissible types take arguments of small types). For this, the proof given in \cite{GeK2008} carries over. Further, we need that it is provable in $\mathcal{C}^{\omega-}$ that
\[
\forall x',x,y\left( x'\gtrsim_\rho x\land x\geq_\rho y\to x'\gtrsim_\rho y\right) \tag{$\dagger$}
\]
holds for all types $\rho$ which can be shown similar as e.g.\ in \cite{Koh2008}.

Suppose now that 
\[
\mathcal{S}^{\omega,\Omega,S}\models \forall\underline{a}^{\underline{\delta}}\exists\underline{b}\leq_{\underline{\sigma}}\underline{r}\underline{a}\forall\underline{c}^{\underline{\gamma}}A_{qf}(\underline{a},\underline{b},\underline{c}).
\]
Then also $\mathcal{M}^{\omega,\Omega,S}$ is a model of this sentence: First the types of the variables which are universally quantified are admissible, so over $\mathcal{M}^{\omega,\Omega,S}$ the domain of the universal quantifiers is reduced. For the witnesses for $\underline{b}$, which exist in $\mathcal{S}^{\omega,\Omega,S}$, note first that these could potentially live in $\mathcal{M}^{\omega,\Omega,S}$ as the types of the variables in $\underline{b}$ are admissible, i.e.\ they take arguments of small types and map into small types. It thus only remains to be seen whether such a witness is majorizable for majorizable inputs. However, by the above argument, the terms in $\underline{r}$ are all majorizable and if $\underline{a}$ comes from $\mathcal{M}^{\omega,\Omega,S}$, then $\underline{r}\underline{a}$ is majorizable. That we have $\underline{b}\leq_{\underline{\sigma}}\underline{r}\underline{a}$ in $\mathcal{M}^{\omega,\Omega,S}$ now implies that $\underline{b}$ is majorizable by $(\dagger)$ (and consequently the corresponding interpretations exist in $\mathcal{M}^{\omega,\Omega,S}$ too). Lastly, it is rather immediate to see that $\mathcal{M}^{\omega,\Omega,S}\models \beth$ implies $\mathcal{M}^{\omega,\Omega,S}\models\widetilde{\beth}$ using $\mathsf{b}\text{-}\mathsf{AC}_{\Omega,S}$.\\

From $\mathcal{M}^{\omega,\Omega,S}\models\widetilde{\beth}$, we immediately get that the corresponding Skolem functions have interpretations in $\mathcal{M}^{\omega,\Omega,S}$, that the corresponding structures defined by some canonical interpretations of those additional constants are indeed models of those variants of the systems where the corresponding Skolem functionals of these axioms are added and where the axioms themselves are replaced by their instantiated Skolem normal forms (i.e.\ $\overline{\mathcal{C}}^{\omega-}_\beth$ and its extensions) and, lastly, that the above majorizability result extends to these systems.\\

Note that, technically, these arguments were already needed in the above considerations to see that $\mathcal{M}^{\omega,\Omega,S}$ really is a model of $\mathcal{C}^{\omega-}+(\mathrm{BR})$. However, we did not discuss this there explicitly as for those specific axioms of type $\Delta$ belonging to $\mathcal{C}^{\omega-}+(\mathrm{BR})$, the types of the variables occurring in them are not only small but actually all among $\{0,1,\Omega,S,S(0)\}$ so that it was immediately clear that the models coincide at that level (essentially just by definition) and we thus omitted such a general discussion there.
\end{proof}

Combined with the Dialectica interpretation, the main result we then arrive at is the following bound extraction result for classical proofs:

\begin{theorem}\label{thm:metatheorem}
Let $\mathcal{C}^\omega$ be (one of the previously discussed extensions of) the system $\mathcal{F}^\omega[\PP]$ and let $\beth$ be a set of formulas of type $\Delta$. Let $\tau$ be admissible, $\delta$ be of degree $1$ and $s$ be a closed term of $\mathcal{C}^\omega$ of type $\sigma(\delta)$ for admissible $\sigma$ and let $B_\forall(x,y,z,u)$/$C_\exists(x,y,z,v)$ be $\forall$-/$\exists$-formulas of $\mathcal{C}^{\omega}$ with only $x,y,z,u$/$x,y,z,v$ free. If
\[
\mathcal{C}^{\omega}+\beth\vdash\forall x^\delta\forall y\leq_\sigma s(x)\forall z^\tau\left(\forall u^0 B_\forall(x,y,z,u)\rightarrow\exists v^0 C_\exists(x,y,z,v)\right),
\]
then one can extract a partial functional $\Phi:\mathcal{S}_{\delta}\times \mathcal{S}_{\widehat{\tau}}\rightharpoonup\mathbb{N}$ which is total and (bar-recursively) computable on $\mathcal{M}_\delta\times \mathcal{M}_{\widehat{\tau}}$ and such that for all $x\in \mathcal{S}_\delta$, $z\in \mathcal{S}_\tau$, $z^*\in \mathcal{S}_{\widehat{\tau}}$, if $z^*\gtrsim z$, then
\[
\mathcal{S}^{\omega,\Omega,S}\models\forall y\leq_\sigma s(x)\left(\forall u\leq_0\Phi(x,z^*) B_\forall(x,y,z,u)\rightarrow\exists v\leq_0\Phi(x,z^*)C_\exists(x,y,z,v)\right)
\]
holds whenever $\mathcal{S}^{\omega,\Omega,S}\models \beth$ for $\mathcal{S}^{\omega,\Omega,S}$ defined via any non-empty set $\Omega$ and any algebra $S\subseteq 2^\Omega$ (or, in the context of the constant $\bigcup$, any $\sigma$-algebra) together with any probability content $\PP$ on $S$ (and with suitable interpretations of the additional constants). Further:
\begin{enumerate}
\item If $\widehat{\tau}$ is of degree $1$, then $\Phi$ is a total computable functional. 
\item We may have tuples instead of single variables $x,y,z,u,v$ and a finite conjunction instead of a single premise $\forall u^0 B_\forall(x,y,z,u)$.
\item If the claim is proved without $\DC$, then $\tau$ may be arbitrary and $\Phi$ will be a total functional on $\mathcal{S}_\delta\times \mathcal{S}_{\widehat{\tau}}$ which is primitive recursive in the sense of G\"odel \cite{Goe1958} and Hilbert \cite{Hil1926}. In that case, also plain majorization can be used instead of strong majorization \emph{(}see e.g.\ \cite{Koh2008}\emph{)}.
\end{enumerate}
\end{theorem}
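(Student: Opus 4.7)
The plan is to follow the by now standard Kohlenbach-style schema for bound extraction metatheorems, adapted to our new setting. The four successive steps will be: (i) pass from $\mathcal{C}^\omega+\beth$ to the Skolemized auxiliary theory $\overline{\mathcal{C}}^\omega_\beth$; (ii) apply negative translation and functional interpretation via Lemma \ref{lem:ndinterpretation} to extract closed term realizers; (iii) majorize these realizers using Lemma \ref{lem:majmainresult}; and (iv) transfer the numerical conclusion from $\mathcal{M}^{\omega,\Omega,S}$ back to $\mathcal{S}^{\omega,\Omega,S}$ using the admissibility of the types involved.

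First, I would move to the theory $\overline{\mathcal{C}}^\omega_\beth$ defined before Lemma \ref{lem:majmainresult}. By construction, this theory extends $\mathcal{A}^\omega$ only by new types, new constants and purely universal axioms, so Lemma \ref{lem:ndinterpretation} applies to it directly. Applied to the classically prenexed form
\[
\forall x^\delta, y\leq_\sigma s(x), z^\tau, u^0\,\exists v^0,\underline{U}\left(B_\forall(x,y,z,\underline{U})\to C_\exists(x,y,z,v)\right),
\]
the lemma yields a tuple of closed terms $\underline{t}$ of $\overline{\mathcal{C}}^{\omega-}_\beth+(\mathrm{BR})$ that, after the usual rearrangement of the Dialectica output, realize both the witness for $v$ and the Herbrand indices for the universal quantifier on $u$ as functionals of $x,y,z$.

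Second, I would invoke Lemma \ref{lem:majmainresult} to produce majorants $\underline{t}^*$ of $\underline{t}$ in $\mathcal{A}^\omega+(\mathrm{BR})$. Two crucial features of the extended notion of majorizability now enter: every $x^\Omega$ and every $A^S$ is majorized by $1$ in $\mathcal{M}^{\omega,\Omega,S}$ since $\PP(\Omega)=1$, so arguments of these types contribute no dependency to the resulting bound; and whenever $y\leq_\sigma s(x)$, property $(\dagger)$ from the proof of Lemma \ref{lem:majmainresult} shows that any majorant of the closed term $s(x)$ also majorizes $y$, collapsing the dependency on $y$ into a dependency on $x$. Setting $\Phi(x,z^*)$ to be the maximum of the relevant outputs of $\underline{t}^*$ evaluated at $x,z^*$ and at the constant majorants $1$ for all $\Omega$- and $S$-arguments yields a functional with the advertised computability and totality properties. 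The refinements (1)--(3) then follow by standard remarks: if $\widehat{\tau}$ is of degree $1$ then the extracted term already lives in $\mathcal{A}^\omega$; tuples and finite premise conjunctions are handled by iterating the same argument; and without $\DC$ no bar recursion is needed and plain majorization suffices.

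Third, I would transfer the conclusion to $\mathcal{S}^{\omega,\Omega,S}$. Since $\delta$ is of degree $1$ and $\tau,\sigma$ are admissible, any given $x\in\mathcal{S}_\delta$, any $z\in\mathcal{S}_\tau$ with a chosen majorant $z^*\in\mathcal{S}_{\widehat\tau}$ and any $y\leq_\sigma s(x)$ lie inside the corresponding $\mathcal{M}$-fibers. The bounded statement $\forall u\leq\Phi(x,z^*)\,B_\forall\to\exists v\leq\Phi(x,z^*)\,C_\exists$ then has the same truth value in $\mathcal{M}^{\omega,\Omega,S}$ and $\mathcal{S}^{\omega,\Omega,S}$ because the evaluations of $\forall$- and $\exists$-formulas over admissible-typed variables reduce, modulo bounded quantifiers over numbers, to evaluations at elements that live in both models and because the additional constants are interpreted identically. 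The main obstacle will be the careful handling of the $\Delta$-axioms in $\beth$ (and of the built-in $\Delta$-axioms of $\mathcal{C}^\omega$ such as $(\PP)_4$, $(I)_3$, $(I)_5$, $(\int)_3$, $(\int)_4$): one must ensure that their Skolem normal forms really hold in $\mathcal{M}^{\omega,\Omega,S}$, which hinges on the fact that this structure validates the bounded choice principle $\mathsf{b}\text{-}\mathsf{AC}_{\Omega,S}$ even though full choice fails. This is precisely the argument carried out in Lemma \ref{lem:majmainresult} and it is also what justifies the uniform majorant $1$ for types $\Omega$ and $S$; the careful bookkeeping of this uniform majorization through the closed-term calculus is what ultimately produces the promised independence of $\Phi$ from all data attached to $\Omega$, $S$ and $\PP$.
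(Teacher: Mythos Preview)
Your proposal is correct and follows essentially the same route as the paper: pass to the Skolemized theory $\overline{\mathcal{C}}^\omega_\beth$, prenex and apply Lemma~\ref{lem:ndinterpretation} to extract closed realizers $t_u,t_v$, majorize them via Lemma~\ref{lem:majmainresult}, set $\Phi$ to be the maximum of the majorants, and then transfer from $\mathcal{M}^{\omega,\Omega,S}$ to $\mathcal{S}^{\omega,\Omega,S}$ using the admissibility of the types. The only blemish is that your displayed prenexed form is garbled (the outer $\forall u^0$ should be $\exists u^0$ and the argument of $B_\forall$ should be $u$, not $\underline{U}$), but your surrounding prose (``Herbrand indices for the universal quantifier on $u$'') shows you have the correct picture.
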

\begin{proof}
First, assume that $\mathcal{S}^{\omega,\Omega,S}\models \beth$ and (for simplicity) that
\[
\mathcal{C}^{\omega}+\beth\vdash\forall z^\tau\left(\forall u^0 B_\forall(z,u)\rightarrow\exists v^0 C_\exists(z,v)\right).
\]
Clearly, the same statement is then also provable in $\overline{\mathcal{C}}^\omega_\beth$. By assumption, $B_\forall(z,u)=\forall\underline{a} B_{qf}(z,u,\underline{a})$ and $C_\exists(z,v)=\exists\underline{b} C_{qf}(z,v,\underline{b})$ for quantifier-free $B_{qf}$ and $C_{qf}$. Thus, by prenexiation, we get
\[
\overline{\mathcal{C}}^\omega_\beth\vdash\forall z^\tau\exists u,v,\underline{a},\underline{b}(B_{qf}(z,u,\underline{a})\rightarrow C_{qf}(z,v,\underline{b})).
\]
Using Lemma \ref{lem:ndinterpretation} (which is applicable as $\overline{\mathcal{C}}^\omega_\beth$ is an extension of $\mathcal{F}^\omega$ only by new constants and purely universal axioms) and disregarding the realizers for $\underline{a},\underline{b}$, we get closed terms $t_u,t_v$ of $\overline{\mathcal{C}}^{\omega-}_\beth+(\mathrm{BR})$ such that
\[
\overline{\mathcal{C}}^{\omega-}_\beth+(\mathrm{BR})\vdash\forall z^\tau(B_\forall (z,t_u(z))\rightarrow C_\exists(z,t_v(z))).
\]
By Lemma \ref{lem:majmainresult} there are closed terms $t^*_u,t^*_v$ of $\mathcal{A}^\omega+(\mathrm{BR})$ such that
\[
\mathcal{M}^{\omega,\Omega,S}\models t^*_u\gtrsim t_u\land t^*_v\gtrsim t_v\land\forall z^\tau(B_\forall(z,t_u(z))\rightarrow C_\exists(z,t_v(z)))
\]
for all non-empty sets $\Omega$, any algebra (or $\sigma$-algebra) $S\subseteq 2^\Omega$ and any probability content $\PP$ on $S$ and where the constants are interpreted as in Lemma \ref{lem:majmainresult}. Define 
\[
\Phi(z^*):=\max\{t^*_u(z^*),t^*_v(z^*)\}.
\]
Then
\[
\mathcal{M}^{\omega,\Omega,S}\models\forall u\leq_0\Phi(z^*) B_\forall(z,u)\rightarrow\exists v\leq_0\Phi(z^*) C_\exists(z,v)
\]
holds for all $z\in \mathcal{M}_\tau$ and $z^*\in \mathcal{M}_{\widehat{\tau}}$ with $z^*\gtrsim z$. The conclusion that $\mathcal{S}^{\omega,\Omega,S}$ satisfies the same sentence can be achieved as in the proof of Theorem 17.52 in \cite{Koh2008} which we sketch here: Note that in the conclusion, we restrict ourselves to those $z$ which have majorants $z^*$. As the type of $z$ is admissible, it takes arguments of small type for which $\mathcal{M}^{\omega,\Omega,S}$ and $\mathcal{S}^{\omega,\Omega,S}$ coincide (recall the proof of Lemma \ref{lem:majmainresult}). Therefore, any such $z,z^*$ from $\mathcal{S}^{\omega,\Omega,S}$ also live in $\mathcal{M}^{\omega,\Omega,S}$ so that $\Phi(z^*)$ is well-defined for $z,z^*$ belonging to $\mathcal{S}^{\omega,\Omega,S}$ with $z^*\gtrsim z$. In $B_\forall$, all types are admissible to that truth in $\mathcal{S}^{\omega,\Omega,S}$ implies truth in $\mathcal{M}^{\omega,\Omega,S}$ and similarly for $C_\exists$ where thus truth in $\mathcal{M}^{\omega,\Omega,S}$ implies truth in $\mathcal{S}^{\omega,\Omega,S}$. Lastly, as in Lemma 17.84 in \cite{Koh2008}, we can show that as $\Phi$ is of type $0(\widehat{\tau})$, the interpretations of $\Phi$ in $\mathcal{S}^{\omega,\Omega,S}$ and $\mathcal{M}^{\omega,\Omega,S}$ coincide on majorizable elements. All in all we have that
\[
\mathcal{S}^{\omega,\Omega,S}\models\forall u\leq_0\Phi(z^*) B_\forall(z,u)\rightarrow\exists v\leq_0\Phi(z^*) C_\exists(z,v)
\]
holds for all $z\in S_\tau$ and $z^*\in S_{\widehat{\tau}}$ with $z^*\gtrsim z$.

\smallskip

The additional $\forall x^\delta\forall y\leq_\sigma s(x)$ can be treated as e.g.\ discussed in \cite{Koh2005} and we thus omit any details. Similarly, item (1) can be shown as in the proof of Theorem 17.52 from \cite{Koh2008} (see page 428 therein). Further, (2) is immediate and (3) follows from the fact that without $\DC$, bar recursion becomes superfluous and the model $\mathcal{M}^{\omega,\Omega,S}$ can be avoided.
\end{proof}

\section{Applications of the metatheorems}
\label{sec:appliations}
In this section, we are now concerned with the applications of the above metatheorems. Concretely, we want to indicate how the systems introduced prior can be used together with their metatheorems to recognize previous (ad hoc) applications in the spirit of the proof mining program as proper instances of proof-theoretic bound extraction theorems. For this, we here focus on the seminal work \cite{ADR2012} by Avigad, Dean and Rute where we find that the quantitative results obtained in \cite{ADR2012} for Egorov's theorem as well as the dominated convergence theorem, although in general being of bar-recursive complexity (which is due to the use of a certain principle of countable choice as we will later see formally), are nevertheless highly uniform, being in particular independent of the space, the measurable sets and the measure. As already mentioned in the introduction, the authors of \cite{ADR2012} presumed that this independence can be explained using some instantiation of the notion of majorizability. In that way, the metatheorems proved before and the discussion in this section show that this intuition was correct and that the uniformities are a necessary consequence of the novel form of majorizability introduced in this paper.\\

However, as already discussed in the introduction, we want to mention that the focus on the work \cite{ADR2012} shall be understood to be merely indicative on the usefulness of the systems and metatheorems discussed before and that essentially all other quantitative works on probability theory in the spirit of proof mining that have so far been considered in the literature (as well as forthcoming work by the authors together with Thomas Powell) can be similarly recognized as instances of the metatheorems proved here and, similarly, also there the peculiar uniformities observed in practice are a priori guaranteed by the approach towards the metatheorems chosen here. In particular, as discussed in the introduction, the logical perspective provided by this work was crucial for obtaining the recent case studies presented in \cite{Ner2024,NP2024}.\\

Now, the work \cite{ADR2012} is concretely concerned with interrelations between different modes of convergence for sequences of random variables. The most prolific of these modes, also based on its similarity to a usual notion of pointwise convergence of functions, is that of almost sure convergence.

\begin{definition}[Almost sure convergence]\label{def:almostSureConv}
Let $(\Omega,S,\PP)$ be a probability space and $(X_n)$ be a sequence of random variables $X_n:\Omega\to\mathbb{R}$ (i.e.\ $X_n$ is measurable w.r.t.\ $S$ and the Borel $\sigma$-algebra on $\mathbb{R}$). Then $(X_n)$ is said to converge almost surely to a random variable $X:\Omega\to\mathbb{R}$ if
\[
\PP(\{x\in \Omega\mid X_n(x) \to X(x)\}) = 1.
\]
\end{definition}

This notion of almost sure convergence does not lend itself easily for a quantitative account of that convergence. Thus, in many cases where probability theorists are concerned with quantitative results (see e.g.\ \cite{Luz2018,Sie1975}), they opt for a different, but equivalent, formulation of almost sure convergence known as almost uniform convergence.

\begin{definition}[Almost uniform convergence]\label{def:infau}
Let $(\Omega,S,\PP)$ be a probability space and $(X_n)$ be a sequence of random variables $X_n:\Omega\to\mathbb{R}$. Then $(X_n)$ is said to converge almost uniformly\footnote{Almost uniform convergence is usually formulated by requiring that for every $\varepsilon>0$, there exists a measurable set $F_\varepsilon\in S$ with $\PP(F_\varepsilon) \le \varepsilon$ such that $(X_n)$ converges uniformly on $F_\varepsilon^c$ to $X$. The (equivalent) formulation given here is however more fruitful from an applied proof-theoretic perspective as it immediately allows for a very natural Cauchy-type variant.} to a random variable $X:\Omega\to\mathbb{R}$ if for all $\varepsilon, \delta >0$ there exists an $N \in \NN$ such that 
\[
\PP(\{x\in \Omega\mid \forall n \ge N \left( |X_n(x) - X(x)| \le \varepsilon\right)\}) > 1 - \delta.
\]
\end{definition}

The seminal result that these two notions of convergence are indeed equivalent is known as Egorov's theorem \cite{Ego1911}. Note that since $(\Omega,S,\PP)$ is a probability space and the $X_n$'s are random variables (and therefore measurable), the sets we take the probability of in the above definitions are measurable sets.\\

This result was analyzed quantitatively in \cite{ADR2012} and then was used in turn also to provide a quantitative dominated convergence theorem for the Lebesgue integral. With the results from this section, we will be able to recognize this analysis as an instance of the preceding metatheorems for probability contents on algebras.\\

We now move towards formalizing the results from \cite{ADR2012} and for that introduce a sequence of random variables into the system $\mathcal{F}^\omega[\PP,\mathrm{Integral}]$ by adding an additional constant $X^{1(\Omega)(0)}$ together with the axiom
\[
\forall n^0\left( X(n)\in I\right)
\]
to stipulate that the sequence in question belongs to our subspace of bounded and weakly Borel-measurable functions. It is clear that Theorem \ref{thm:metatheorem} extends from $\mathcal{F}^\omega[\PP,\mathrm{Integral}]$ to its extension by this constant $X$ as any $X(n)$ is trivially majorizable as it is bounded and the whole constant $X$ is thus majorized by a maximum construction. In that system, using axiom $(I)_3$ that asserts the weak Borel-measurability of any $X(n)$, it is then in particular a consequence of $\Pi^\Omega_1$-$\mathsf{AC}$ (and actually of $\mathsf{b}\text{-}\mathsf{AC}_{\Omega,S}$ by regarding quantification over the type $S$ as bounded) that there exists a functional $P^{S(0)(0)(0)}$ such that
\[
\forall a^0,b^0,c^0,x^\Omega(x \in P(a,b,c) \leftrightarrow x\in (\vert X(c)-X(b)\vert)^{-1}([0,2^{-a}])).
\]
We add such a $P$ directly into the language of the system via a new constant of the appropriate type together with the above defining property as an axiom and denote the resulting system by $\mathcal{F}^\omega[\PP,\mathrm{Integral},X]$.\\

Using this functional $P$, we can then formally introduce the alternative way of formulating almost uniform convergence using finite unions as (implicitly) introduced in \cite{ADR2012}, which allows for both a natural metastable variant as well as to extended any discussion regarding this notion naturally to the context of contents:

\begin{definition}\label{def:au}
We say that $X$ converges almost uniformly with respect to finite unions if 
\[
\forall k^0, a^0\exists b^0\forall c^0\left( \PP\left(\bigcup_{i=b}^c\bigcup_{j=b}^c P(a,i,j)^c\right)\le_\mathbb{R} 2^{-k}\right).
\]
\end{definition}

It is rather immediately clear that this notion of almost uniform convergence w.r.t.\ finite unions is equivalent over probability spaces to the usual notion of almost uniform convergence. Further, we want to emphasize that this mode was not explicitly introduced in \cite{ADR2012} but rather \emph{implicitly} as already mentioned above as it is naturally suggested through the quantitative rendering of almost uniform convergence used in \cite{ADR2012} in their main quantitative result given in Theorem 3.1. Concretely, one immediately finds that a solution of the monotone functional interpretation of the negative translation of almost uniform convergence w.r.t.\ finite unions is exactly a function $M(k,a)$ providing a $2^{-k}$-uniform bound for the $2^{-a}$-metastable convergence of the sequence coded by $X$ as introduced in \cite{ADR2012}.\\

Similarly, we can also give a formal representation of the notion of almost uniform metastable pointwise convergence as introduced in \cite{ADR2012} in the context of the system $\mathcal{F}^\omega[\PP,\mathrm{Integral},X]$:

\begin{definition}\label{def:aump}
We say that $X$ converges almost uniform metastable pointwisely if
\[
\forall k^0,a^0,F^1\exists b^0\left( \PP\left(\bigcap_{m=0}^b\bigcup_{i=m}^{F(m)}\bigcup_{j=m}^{F(m)}P(a,i,j)^c\right)\le_\mathbb{R} 2^{-k}\right).
\]
\end{definition}

Contrary to Definition \ref{def:au}, this mode of convergence was explicitly introduced in \cite{ADR2012} as a ``more quantitatively friendly'' version of the notion of almost sure convergence. In particular, as shown by Proposition 4.1 in \cite{ADR2012}, the notion of almost uniform metastable pointwise convergence coincides over probability spaces with that of almost sure convergence. Also, as essentially already observed in \cite{ADR2012}, a solution to the monotone functional interpretation (of the negative translation of) the statement of almost uniform metastable pointwise convergence is exactly a function $M'(k,a)$ providing a $2^{-k}$-uniform bound on the $2^{-a}$-metastable pointwise convergence of the sequence coded by $X$ as introduced in \cite{ADR2012}.\\

In \cite{ADR2012}, the authors now provide a quantitative version of Egorov's theorem by constructing a solution of the monotone functional interpretation of the negative translation of almost uniform convergence w.r.t.\ finite unions from a solution of the monotone functional interpretation (of the negative translation of) the statement of almost uniform metastable pointwise convergence and so they, in particular, provide a uniform quantitative rendering of the corresponding implication. We justify this application by showing in the following that already the system $\mathcal{F}^\omega[\PP,\mathrm{Integral},X]$ proves this implication between the above two modes of convergence. Besides thereby explaining the success and the uniformities of the quantitative version of Egorov's theorem from \cite{ADR2012}, our formal investigations here show in particular that the corresponding results from \cite{ADR2012} are true for probability contents and not just probability measures as illustrated in \cite{ADR2012} (by virtue of the implication from almost uniform metastable pointwise convergence to almost uniform convergence w.r.t.\ finite unions being provable in the system $\mathcal{F}^\omega[\PP,\mathrm{Integral},X]$), i.e. the authors inadvertently provided an ``Egorov-like theorem'' for probability contents. Concretely, this seems to be in particular due to the above renderings of the notions of almost sure and almost uniform convergence introduced via a finitary perspective informed by proof mining in \cite{ADR2012}, which provide exactly those alternative phrasings of these notions that are much more nicely compatible with the notion of contents due to a computationally effective formulation using finite unions. In that way, the results from this section tie into the comments made in the introduction that the notions and proofs produced in \cite{ADR2012} through the finitary perspective of proof mining provide the right notions to simultaneously see the results from the light of the theory of contents (and thus, as mentioned before, highlighting the naturalness of the theory of contents as a basis for proof mining in probability theory).\\

We now first note that one direction of that equivalence can be easily witnessed in the system $\mathcal{F}^\omega[\PP,\mathrm{Integral},X]$ discussed previously.

\begin{theorem}\label{thrm:au_to_asfu}
The system $\mathcal{F}^\omega[\PP,\mathrm{Integral},X]$ proves that if $X$ converges almost uniformly with respect to finite unions, then $X$ converges almost uniformly metastable pointwisely.
\end{theorem}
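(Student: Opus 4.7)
The plan is to show that the very same witness $b$ furnished by almost uniform convergence with respect to finite unions already serves as the required witness for almost uniform metastable pointwise convergence, for any functional $F$. Fix $k^0, a^0, F^1$. By the hypothesis of Definition \ref{def:au}, choose $b^0$ such that
\[
\forall c^0\left(\PP\left(\bigcup_{i=b}^c\bigcup_{j=b}^c P(a,i,j)^c\right)\le_\mathbb{R} 2^{-k}\right).
\]
I claim this same $b$ witnesses Definition \ref{def:aump} for the given $k, a, F$.

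The key set-theoretic observation is the inclusion
\[
\bigcap_{m=0}^{b}\bigcup_{i=m}^{F(m)}\bigcup_{j=m}^{F(m)}P(a,i,j)^c \subseteq_S \bigcup_{i=b}^{F(b)}\bigcup_{j=b}^{F(b)}P(a,i,j)^c,
\]
since a finite intersection is in particular included in any of its component sets. This is a routine fact in $\mathcal{F}^\omega$ derivable by induction on the length of the intersection, using the definition of $\cap$ in terms of $\cup$ and $(\cdot)^c$ together with the axioms $(\in)_3$ and $(\in)_4$. Combined with the monotonicity of $\PP$ given by axiom $(\PP)_4$ and Proposition \ref{prop:properties_of_P}, it suffices to bound the probability of the right-hand side by $2^{-k}$.

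Here the argument splits on the decidable quantifier-free comparison $F(b) \ge_0 b$ vs $F(b) <_0 b$, which is available via classical logic in $\mathcal{F}^\omega$. In the first case, instantiating the hypothesis with $c := F(b)$ directly yields the bound $\PP(\bigcup_{i=b}^{F(b)}\bigcup_{j=b}^{F(b)}P(a,i,j)^c) \le_\mathbb{R} 2^{-k}$. In the second case, the convention $\bigcup_{i=n}^{m}A(i):=\emptyset$ for $m<_0 n$ makes the right-hand side syntactically equal to $\emptyset$, so by the inclusion above the intersection is $\subseteq_S \emptyset$, whence it is $=_S\emptyset$, and axiom $(\PP)_2$ together with the extensionality of $\PP$ (item (1) of Proposition \ref{prop:properties_of_P}) gives probability $0 \le_\mathbb{R} 2^{-k}$.

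There is no substantial obstacle here: once one identifies that the witness from Definition \ref{def:au} is already uniform in $c$, the proof reduces to routine applications of monotonicity and the book-keeping case split handling the empty-union convention. In fact, the argument produces a witness $b$ that is independent of $F$, reflecting the informal fact that convergence with respect to finite unions is quantitatively much stronger than metastable pointwise convergence; the content of Theorem 3.1 of \cite{ADR2012} (and its probabilistic-contents analogue) lies entirely in the converse direction, which will require a genuinely bar-recursive construction.
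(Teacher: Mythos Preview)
Your proof is correct and follows essentially the same route as the paper: pick the witness $b$ from almost uniform convergence with respect to finite unions, use the inclusion of the finite intersection into its $m=b$ component, and apply monotonicity of $\PP$. The paper omits your case split on $F(b)\geq_0 b$ versus $F(b)<_0 b$, simply instantiating the hypothesis with $c:=F(b)$ in all cases (which is legitimate since the hypothesis is stated for all $c^0$ and the empty-union convention already makes the bound trivial when $F(b)<_0 b$); your explicit treatment of this is harmless extra care rather than a different idea.
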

\begin{proof}
We reason in $\mathcal{F}^\omega[\PP,\mathrm{Integral},X]$. Let $k$, $a$ and $F$ be given. Using that $X$ converges almost uniformly w.r.t.\ finite unions, there exists a $b$ such that 
\[
\PP\left(\bigcup_{i=b}^c\bigcup_{j=b}^c P(a,i,j)^c\right)\le 2^{-k}
\]
for all $c$. Now, we in particular have
\[
\bigcap_{m=0}^b\bigcup_{i=m}^{F(m)}\bigcup_{j=m}^{F(m)} P(a,i,j)^c\subseteq \bigcup_{i=b}^{F(b)}\bigcup_{j=b}^{F(b)} P(a,i,j)^c
\]
and therefore 
\[
\PP\left(\bigcap_{m=0}^b\bigcup_{i=m}^{F(m)}\bigcup_{j=m}^{F(m)} P(a,i,j)^c\right)\leq\PP\left(\bigcup_{i=b}^{F(b)}\bigcup_{j=b}^{F(b)} P(a,i,j)^c\right)\leq 2^{-k}
\]
follows from the monotonicity of $\PP$. This yields that $X$ converges almost uniformly metastable pointwisely.
\end{proof}

As mentioned before, a quantitative version of the converse of the above theorem is one of the main results of \cite{ADR2012} and to obtain this, the authors of \cite{ADR2012} mainly utilized a quantitative version of the following property about sequences of events.

\begin{theorem}[Theorem 2.2 of \cite{ADR2012}]
For every sequences of events $(A_n)$, any functional $M: \mathbb{N}^\mathbb{N}\to \NN$ and any $\lambda > \lambda' > 0$, there exists an $n \in \NN$ such that
\[
\PP\left(\bigcap_{m = 0}^{M(F)}\bigcup_{j = m}^{F(m)}A_j\right) < \lambda'\text{ for all }F: \NN \to \NN
\]
implies $\PP(A_n) < \lambda$.
\end{theorem}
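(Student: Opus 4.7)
My plan is to argue by contrapositive: assume $\PP(A_n) \geq \lambda$ for every $n$, and exhibit a particular $F : \mathbb{N} \to \mathbb{N}$ for which
\[
\PP\!\left(\bigcap_{m=0}^{M(F)} \bigcup_{j=m}^{F(m)} A_j\right) \geq \lambda',
\]
contradicting the stated hypothesis. By contraposition this produces the desired $n$, and a concrete witness can in principle be read off from the closed terms produced by Theorem \ref{thm:metatheorem} applied to the resulting proof.

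The underlying idea is to finitise the classical argument that $\PP(\limsup_n A_n) \geq \lambda$ whenever $\PP(A_n) \geq \lambda$ for all $n$. Set $\varepsilon := \lambda - \lambda' > 0$. For each fixed $m$, the sequence $N \mapsto \PP\bigl(\bigcup_{j=m}^{N} A_j\bigr)$ is non-decreasing and bounded by $1$, and hence Cauchy by Proposition \ref{pro:meta_countable_convergence} (or directly by the folklore lemma preceding it). Using $\QFAC$, I would pick $F$ with $F(m) \geq m$ and such that
\[
\PP\!\left(\bigcup_{j=m}^{K} A_j\right) - \PP\!\left(\bigcup_{j=m}^{F(m)} A_j\right) < 2^{-(m+2)} \varepsilon \qquad \text{for all } K \geq F(m).
\]

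With this $F$, set $L := M(F)$ and $N := \max_{m \leq L} F(m)$, and write $B_m := \bigcup_{j=m}^{F(m)} A_j$ and $B'_m := \bigcup_{j=m}^{N} A_j$. Two observations combine to produce the bound. First, the sets $B'_m$ form a nested decreasing chain for $0 \leq m \leq L \leq N$, so $\bigcap_{m=0}^L B'_m = B'_L \supseteq A_L$ and hence $\PP(B'_L) \geq \lambda$. Second, since $B_m \subseteq B'_m$ for each such $m$ (as $F(m) \leq N$) we have $\bigcap_{m} B_m \subseteq B'_L$, and a Boolean union bound on set differences yields
\[
\PP\!\left(B'_L \setminus \bigcap_{m=0}^L B_m\right) \leq \sum_{m=0}^L \PP(B'_m \setminus B_m) = \sum_{m=0}^L \bigl(\PP(B'_m) - \PP(B_m)\bigr) < \sum_{m=0}^{\infty} 2^{-(m+2)} \varepsilon = \varepsilon/2,
\]
so $\PP\bigl(\bigcap_{m} B_m\bigr) \geq \lambda - \varepsilon/2 > \lambda'$, as required.

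The main step requiring care is the uniform choice of $F$ from the pointwise Cauchy property: this is the sole use of choice in the argument, and is covered already by $\QFAC$. The remainder reduces to inclusion--exclusion manipulations available in $\mathcal{F}^\omega[\PP]$ from the additivity and monotonicity axioms $(\PP)_3$, $(\PP)_4$, as encoded in Proposition \ref{prop:properties_of_P}. In particular, no appeal to $\sigma$-additivity, $\sigma$-algebras, or measurable structure beyond the algebra $S$ is made, so the whole proof lives within $\mathcal{F}^\omega[\PP]$ and thereby falls squarely within the scope of Theorem \ref{thm:metatheorem}.
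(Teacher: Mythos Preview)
Your proof is correct and follows essentially the same approach as the paper: construct $F$ from the Cauchy property of the partial unions $\PP\bigl(\bigcup_{j=m}^N A_j\bigr)$ with geometrically decaying error in $m$, set $n:=M(F)$, and compare $A_{M(F)}$ against $\bigcap_{m\le M(F)}\bigcup_{j=m}^{F(m)}A_j$ via a union bound over the differences $B'_m\setminus B_m$. The paper carries out the identical computation directly (ending with $\PP(A_{M(F)})<\lambda$) rather than by contrapositive, but the estimate and the witness $n=M(F)$ coincide.

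One small correction: the uniform choice of $F$ is not covered by $\QFAC$ as stated, since the matrix after $\exists N$ still carries a universal quantifier over $K$. After replacing $<$ by $\leq$ in your Cauchy estimate (and adjusting the bound accordingly), the matrix becomes $\Pi^0_1$ and $\Pi^0_1\text{-}\mathsf{AC}$ suffices; the paper makes exactly this remark. This is available in $\mathcal{F}^\omega[\PP]$ via $\DC$, so the argument still lives in the intended system and Theorem~\ref{thm:metatheorem} applies.
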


We in the following now first discuss how (the proof of) this result can be formalized in our system for probability contents on algebras $\mathcal{F}^\omega[\PP]$, justifying the existence and the uniformity of the quantitative result given in \cite{ADR2012} by the means of our metatheorems. Concretely, we show:

\begin{theorem}\label{thrm:combinatorial_lemma}
The system $\mathcal{F}^\omega[\PP]$ proves:
\[
\forall A^{S(0)}, M^{0(1)}, u^0, v^0>_0 u\exists n^0\left(\forall F^1\left(\PP\left(\bigcap_{m = 0}^{M(F)}\bigcup_{j = m}^{F(m)}A(j)\right) \le_{\mathbb{R}} 2^{-v} \right)\to\PP(A(n)) <_{\mathbb{R}} 2^{-u}\right).
\]
\end{theorem}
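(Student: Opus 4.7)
The plan is to prove the contrapositive. Assume $\forall n\,\PP(A(n)) \geq 2^{-u}$; we construct a single $F\colon\N\to\N$ such that $\PP(\bigcap_{m=0}^{M(F)}\bigcup_{j=m}^{F(m)} A(j)) > 2^{-v}$. This refutes the antecedent $\forall F P(F)$, so classically $\exists n\,\PP(A(n)) < 2^{-u}$, completing the proof. The use of $\DC$ below is what the monotone Dialectica interpretation of Theorem~\ref{thm:metatheorem} turns into the bar-recursive bound on $n$ matching the one extracted in \cite{ADR2012}.

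Set $\delta_m := (2^{-u} - 2^{-v})/2^{m+2}$, so that $\sum_{m\geq 0} \delta_m = (2^{-u}-2^{-v})/2$. For each fixed $m$, the sequence $k \mapsto \bigcup_{j=m}^{m+k} A(j)$ is $\subseteq_S$-nondecreasing, so by continuity from below (Proposition~\ref{pro:continuity}) its probabilities form a Cauchy sequence. Using $\DC$, pick $L\colon\N\to\N$ with $L(m)\geq m$ such that for every $m$ and every $k \geq L(m)$,
\[
\PP\bigl(\textstyle\bigcup_{j=m}^k A(j)\bigr) - \PP\bigl(\textstyle\bigcup_{j=m}^{L(m)} A(j)\bigr) < \delta_m.
\]
Put $F := L$, $K := M(F)$, $U_m := \bigcup_{j=m}^{F(m)} A(j)$, and $L^* := \max\{L(0),\ldots,L(K)\}$.

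The key estimate proceeds as follows. By hypothesis, $\PP(\bigcup_{j=K}^{L^*} A(j)) \geq \PP(A(K)) \geq 2^{-u}$. For each $m \leq K$, since $U_m \subseteq \bigcup_{j=m}^{L^*} A(j)$, Proposition~\ref{prop:properties_of_P}(4) together with the Cauchy stabilization yields $\PP(\bigcup_{j=m}^{L^*} A(j)) - \PP(U_m) < \delta_m$, and hence $\PP(\bigcup_{j=K}^{L^*} A(j) \setminus U_m) < \delta_m$. Boole's inequality (Proposition~\ref{prop:properties_of_P}(5)) then gives $\PP(\bigcup_{j=K}^{L^*} A(j) \setminus \bigcap_{m=0}^K U_m) \leq \sum_{m=0}^K \delta_m < (2^{-u}-2^{-v})/2$, so that
\[
\PP\Bigl(\bigcap_{m=0}^K U_m\Bigr) > 2^{-u} - (2^{-u}-2^{-v})/2 = (2^{-u}+2^{-v})/2 > 2^{-v},
\]
contradicting $P(F)$.

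The main obstacle is orchestrating the $\DC$-selection of $L$ so that the stabilization precisions $\delta_m$ telescope to less than $2^{-u}-2^{-v}$, and then translating the classical ``$\limsup$-tail'' argument into finite set-theoretic manipulations valid within $\mathcal{F}^\omega[\PP]$, where only finite Boolean operations on $S$ are available and extensionality is only quantifier-free. The appeal to $\DC$ is essential, and it is precisely this step whose bar-recursive interpretation produces the nonelementary yet fully uniform bound on $n$ promised by the metatheorem.
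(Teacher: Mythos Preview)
Your argument is correct and is essentially the contrapositive of the paper's own proof: the paper assumes the premise $\forall F\,P(F)$, constructs the very same choice function (there called $F$, obtained from the Cauchy stabilisation of $\PP(\bigcup_{j=m}^{k}A(j))$ via Lemma~\ref{lem:com_lem_pre}), and then shows directly that $\PP(A(M(F)))<2^{-u}$ by the same inclusion-and-error estimate you use, just read in the other direction. The only substantive remark is that the choice needed is plain countable $\Pi^0_1$-choice (each $L(m)$ is chosen independently of the others), not genuine dependent choice; the paper notes this explicitly, so your claim that $\DC$ is ``essential'' is slightly overstated, though harmless since $\DC$ is available in $\mathcal{F}^\omega[\PP]$.
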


In particular, as this theorem of $\mathcal{F}^\omega[\PP]$ has the correct logical form, our main Theorem \ref{thm:metatheorem} on the extraction of uniform computable bounds applies and we thus find that the existence of a computable bound on the existential quantifier on $n$ can be guaranteed to exist a priori and even further, based on our notion of majorizability, it can be guaranteed that this bound is independent of the content space and the sequence of events which matches exactly the properties of the bound explicitly calculated in \cite{ADR2012}.\\

To now demonstrate Theorem \ref{thrm:combinatorial_lemma}, we in particular rely on the following lemma:

\begin{lemma}\label{lem:com_lem_pre}
The system $\mathcal{F}^\omega[\PP]$ proves:
\[
\forall A^{S(0)}, k^0\exists N^0\forall n^0\left(\PP\left(\bigcup_{i=0}^n A(i) \cap \left(\bigcup_{i=0}^N A(i)\right)^c\right) <_\mathbb{R} 2^{-k} \right).
\]
\end{lemma}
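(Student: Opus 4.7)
The plan is to reduce the statement to the provable Cauchy-property of the $\PP$-measures of the increasing sequence of partial unions. Define $B(n) := \bigcup_{i=0}^n A(i)$. It is straightforward (by a simple induction over $\bigcup$ and axioms $(\in)_3$, $(\in)_4$) that $B(n) \subseteq_S B(n+1)$ for every $n$. Thus the hypothesis of Proposition \ref{pro:continuity}(1) is met, and we may apply continuity from below to obtain, given $k$, an $N$ such that
\[
\forall n,m \ge_0 N\,\left(|\PP(B(n)) - \PP(B(m))| <_\mathbb{R} 2^{-k}\right).
\]
This $N$ will be the witness we return.

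Next I would verify the conclusion by splitting on whether $n \le N$ or $n > N$. In the first case, $B(n) \subseteq_S B(N)$ by monotonicity of the partial unions, so $B(n) \cap B(N)^c =_S \emptyset$ (this is a routine consequence of $(\in)_3$, $(\in)_4$ and Proposition \ref{pro:basic_prop_fields}), and then extensionality of $\PP$ (Proposition \ref{prop:properties_of_P}(1)) together with $(\PP)_2$ yield $\PP(B(n) \cap B(N)^c) =_\mathbb{R} 0 <_\mathbb{R} 2^{-k}$. In the second case, $n > N$ implies $B(N) \subseteq_S B(n)$, and Proposition \ref{prop:properties_of_P}(4) gives
\[
\PP(B(n) \cap B(N)^c) =_\mathbb{R} \PP(B(n)) - \PP(B(N)).
\]
Since $\PP$ is monotone on $\subseteq_S$, this difference is nonnegative, and by the choice of $N$ it is bounded above by $2^{-k}$, as desired.

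The only mild subtlety is that one must argue for the case-split on $n$ versus $N$ without leaving the system; but since $\le_0$ is decidable and all intermediate sentences are provable in $\mathcal{F}^\omega[\PP]$, no difficulty arises. The ``hard part'' is really just the observation that this lemma \emph{is} nothing more than the Cauchy-property of the sequence $\PP(B(n))$ repackaged as an assertion about measures of symmetric differences; once Propositions \ref{pro:continuity} and \ref{prop:properties_of_P} are in hand, the proof is essentially a two-line case distinction.
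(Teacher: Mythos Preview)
Your proof is correct and follows essentially the same route as the paper's. The only cosmetic difference is that you invoke Proposition~\ref{pro:continuity}(1) (continuity from below) directly for the increasing sequence $B(n)=\bigcup_{i=0}^n A(i)$, whereas the paper goes one level lower and cites Proposition~\ref{pro:meta_countable_convergence} for the partial sums $\sum_{i=0}^n\PP((A\!\uparrow)(i))$ and then rederives the identity $\sum_{i=0}^n\PP((A\!\uparrow)(i))=\PP(B(n))$; since Proposition~\ref{pro:continuity}(1) is itself proved from Proposition~\ref{pro:meta_countable_convergence} via exactly that identity, the two arguments unwind to the same thing, and your packaging is if anything slightly cleaner.
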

\begin{proof}
We reason in $\mathcal{F}^\omega[\PP]$. Let $A^{S(0)}$ and $k^0$ be given. At first, note that Proposition \ref{pro:meta_countable_convergence} implies that
\[
\exists N \forall n \left(n \ge N \to \left\vert \sum_{i=0}^n\PP((A\!\uparrow)(i)) - \sum_{i=0}^N\PP((A\!\uparrow)(i))\right\vert< 2^{-k}\right).\tag{$*$}
\]
Take such an $N$ and let $n$ be arbitrary. If $n<N$, then
\[
\bigcup_{i=0}^n A(i) \cap \left(\bigcup_{i=0}^N A(i)\right)^c=\emptyset
\]
and so by extensionality of $\PP$, we get
\[
\PP\left(\bigcup_{i=0}^n A(i) \cap \left(\bigcup_{i=0}^N A(i)\right)^c\right) = 0
\]
and are done. So suppose $n \ge N$. Then by $(*)$, we get
\[
\left\vert \sum_{i=0}^n\PP((A\!\uparrow)(i)) - \sum_{i=0}^N\PP((A\!\uparrow)(i))\right\vert < 2^{-k}
\]
Since all the $(A\!\uparrow)(i)$ are disjoint (by definition of $A\!\uparrow$) and since we have
\[
\bigcup_{i=0}^j (A\!\uparrow)(i) = \bigcup_{i=0}^j A(i)
\]
for any $j$, we immediately derive
\[
\sum_{i=0}^j\PP((A\!\uparrow)(i)) = \PP\left(\bigcup_{i=0}^j A(i) \right)
\]
for any $j$ by finite additivity and extensionality of $\PP$. Thus, we in particular have 
\[
\left\vert \PP\left(\bigcup_{i=0}^n A(i) \right) - \PP\left(\bigcup_{i=0}^N A(i) \right)\right\vert< 2^{-k}
\]
and since $n\geq N$ implies
\[
\bigcup_{i=0}^N A(i) \subseteq \bigcup_{i=0}^n A(i),
\]
we obtain
\[
\PP\left(\bigcup_{i=0}^n A(i) \cap \left(\bigcup_{i=0}^N A(i)\right)^c\right) = \PP\left(\bigcup_{i=0}^n A(i) \right) - \PP\left(\bigcup_{i=0}^N A(i) \right)< 2^{-k}
\]
by Proposition \ref{prop:properties_of_P}.
\end{proof}

With that lemma, we are now in the position for a formal proof of the main combinatorial theorem from \cite{ADR2012}:

\begin{proof}[Proof of Theorem \ref{thrm:combinatorial_lemma}]
Let $A^{S(0)}$, $M^{0(1)}$, $u^0$ and $v^0$ with $v>u$ be given and suppose
\[
\forall F^1 \left(\PP\left(\bigcap_{m = 0}^{M(F)}\bigcup_{j = m}^{F(m)}A(j)\right) \le 2^{-v} \right).
\]
So, by the previous Lemma \ref{lem:com_lem_pre} applied to the sequence of events $f_m^{S(0)}$ defined by $f_m(k) = A(k+m)$, we have 
\[
\forall m\exists N \forall n\left(\PP\left(\bigcup_{i=m}^{n+m} A(i) \cap \left(\bigcup_{i=m}^{N+m} A(i)\right)^c\right) < \frac{2^{-u}- 2^{-v}}{2^{m+1}}\right)
\]
and so in particular
\[
\forall m\exists N\geq m \forall n\geq m\left(\PP\left(\bigcup_{i=m}^{n} A(i) \cap \left(\bigcup_{i=m}^{N} A(i)\right)^c\right) < \frac{2^{-u}- 2^{-v}}{2^{m+1}}\right).
\]
Thus, using $\mathsf{AC}$ (actually, by switching from $<$ to $\leq$ in the above formulation and manipulating the bound slightly, $\Pi^0_1$-$\mathsf{AC}$ suffices), there exists a functional $F^1$ such that for all $m$ and $n\geq m$:
\[
\PP\left(\bigcup_{i=m}^{n} A(i) \cap \left(\bigcup_{i=m}^{F(m)} A(i)\right)^c\right) < \frac{2^{-u}- 2^{-v}}{2^{m+1}}.
\]
It is now easy to see that for this functional $F$, we have
\begin{align*}
A(M(F))&\subseteq \bigcap_{m = 0}^{M(F)}\bigcup_{i=m}^{M(F)} A(i) \\
&\subseteq\left( \bigcap_{m = 0}^{M(F)}\bigcup_{j = m}^{F(m)}A(j)\right) \cup \bigcup_{m = 0}^{M(F)} \left( \bigcup_{i=m}^{M(F)} A(i) \cap \left(\bigcup_{j = m}^{F(m)}A(j)\right)^c \right)
\end{align*}
and so, by the sub-additivity and monotonicity of $\PP$, we derive 
\[
\PP(A(M(F))) < 2^{-v} + \sum_{m=0}^{M(F)} \frac{2^{-u}- 2^{-v}}{2^{m+1}} < 2^{-u}
\]
and so taking $n:=M(F)$ for this functional $F$ yields the claim.
\end{proof}

Using this combinatory lemma, we can now also prove the converse of Theorem \ref{thrm:au_to_asfu} and thereby exhibit how a quantitative solution for Theorem \ref{thrm:combinatorial_lemma} as obtained in \cite{ADR2012} immediately can be used in conjunction with the proof-theoretic metatheorem established in Theorem \ref{thm:metatheorem} to derive a quantitative version of Egorov's theorem in the sense of the above notions incorporating finite unions as presented in Theorem 3.1 of \cite{ADR2012} and in particular guarantees the observed uniformities of the rates a priori.

\begin{theorem}\label{thrm:au_to_aump}
The system $\mathcal{F}^\omega[\PP,\mathrm{Integral},X]$ proves that if $X$ converges almost uniformly metastable pointwisely, then $X$ converges almost uniformly with respect to finite unions.
\end{theorem}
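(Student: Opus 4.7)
My plan is to apply the combinatorial lemma (Theorem \ref{thrm:combinatorial_lemma}) to a sequence of events carefully built from $X$, with the premise of that lemma verified through the almost uniform metastable pointwise convergence of $X$. Let $k^0$ and $a^0$ be given and abbreviate $B(m,c) := \bigcup_{i=m}^c\bigcup_{j=m}^c P(a,i,j)^c$; the task is to produce $b$ with $\PP(B(b,c)) \le_\mathbb{R} 2^{-k}$ for every $c$. Since for each fixed $m$ the sequence $c \mapsto B(m,c)$ is nondecreasing (with the empty-union convention handling $c < m$), Proposition \ref{pro:continuity}(1) combined with a suitable instance of countable choice available in $\mathcal{F}^\omega[\PP,\mathrm{Integral},X]$ delivers a nondecreasing function $G$ of type $1$ with $G(m) \ge_0 m$ such that $\PP(B(m,c)) - \PP(B(m,G(m))) <_\mathbb{R} 2^{-(k+2)}$ whenever $c \ge_0 G(m)$; $G$ plays the role of a uniform-in-$m$ Cauchy modulus in the auxiliary coordinate.

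Next I would set $A(m) := B(m,G(m))$ and, combining almost uniform metastable pointwise convergence (at $k' := k+2$, $a' := a$) with choice, extract a functional $M_0$ of type $0(1)$ satisfying
\[
\PP\left(\bigcap_{m=0}^{M_0(F')} B(m, F'(m))\right) \le_\mathbb{R} 2^{-(k+2)}
\]
for every $F'$ of type $1$. Define $M(F) := M_0(G \circ F)$. Monotonicity of $G$ yields $B(j,G(j)) \subseteq_S B(m,G(F(m)))$ for every $j$ with $m \le_0 j \le_0 F(m)$, and so $\bigcup_{j=m}^{F(m)} A(j) \subseteq_S B(m,G(F(m)))$ (the case $F(m) <_0 m$ being vacuous). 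Intersecting over $m$ then yields
\[
\PP\left(\bigcap_{m=0}^{M(F)} \bigcup_{j=m}^{F(m)} A(j)\right) \le_\mathbb{R} 2^{-(k+2)}
\]
for every $F$, which is exactly the premise of Theorem \ref{thrm:combinatorial_lemma} applied at $u := k+1$ and $v := k+2$. The lemma then produces some $n$ with $\PP(A(n)) <_\mathbb{R} 2^{-(k+1)}$, and I would set $b := n$. The verification splits into two cases: for $c \le_0 G(n)$, $B(n,c) \subseteq_S A(n)$ gives $\PP(B(n,c)) <_\mathbb{R} 2^{-(k+1)} \le_\mathbb{R} 2^{-k}$; for $c >_0 G(n)$, the Cauchy bound from $G$ gives $\PP(B(n,c)) <_\mathbb{R} \PP(A(n)) + 2^{-(k+2)} <_\mathbb{R} 2^{-(k+1)} + 2^{-(k+2)} <_\mathbb{R} 2^{-k}$.

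The hard part will be coherently meshing $G$ and $M$: the monotonicity of $G$ is essential to the containment $\bigcup_{j=m}^{F(m)} A(j) \subseteq_S B(m,G(F(m)))$, and the tolerances $2^{-(k+2)}$ must be tuned so that the combinatorial-lemma output $2^{-(k+1)}$ together with the Cauchy slack $2^{-(k+2)}$ produces exactly the target $2^{-k}$ in the tail regime. The choice extractions themselves are routine given the arithmetical choice and the Cauchyness statements provable in the ambient system; the delicate step is the insight that choosing $F' := G \circ F$ lets the instance of metastable pointwise convergence at $F'$ dominate the combinatorial-lemma premise at the original $F$, thereby transforming a merely metastable hypothesis into the honest tail estimate we are after.
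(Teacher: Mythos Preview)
Your proof is correct and shares the same core idea as the paper's argument: define the auxiliary events $A(m)=B(m,G(m))$, feed the metastable pointwise hypothesis (precomposed with $G$) into Theorem~\ref{thrm:combinatorial_lemma}, and read off the desired index. The difference is in how $G$ is obtained and how the argument is framed. The paper argues by contradiction: assuming the conclusion fails, $G$ is the Skolem function witnessing $\PP(B(m,G(m)))>2^{-k}$ for every $m$, and the combinatorial lemma (applied with $u=k$, $v=k+1$) produces an $m$ with $\PP(A(m))<2^{-k}$, an immediate contradiction. Your argument is direct: $G$ is a Cauchy modulus coming from continuity from below (Proposition~\ref{pro:continuity}(1)), the combinatorial lemma is applied one level finer ($u=k+1$, $v=k+2$), and the final step splits on $c\le G(n)$ versus $c>G(n)$, absorbing the Cauchy slack $2^{-(k+2)}$ into the target $2^{-k}$. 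The paper's route is slightly shorter and avoids the tolerance bookkeeping at the end; your route has the advantage of being direct and making the witness $b=n$ explicit, which aligns better with the bound-extraction philosophy underlying the metatheorems. Both proofs use the same choice principles to produce $G$ and $M$ (resp.\ $M_0$), and the key combinatorial step---that precomposing with a monotone $G$ lets the metastable hypothesis control the premise of Theorem~\ref{thrm:combinatorial_lemma}---is identical.
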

\begin{proof}
Suppose that $X$ does not converge almost uniformly with respect to finite unions, i.e.\ that we have $k$ and $a$ such that
\[
\forall m\exists g\left( \PP\left(\bigcup_{i=m}^g\bigcup_{j=m}^g P(a,i,j)^c\right)> 2^{-k}\right).
\]
Using $\QFAC$ (after suitably prenexing the hidden quantifiers), we get a functional $G$ such that
\[
\forall m\left(\PP\left(\bigcup_{i=m}^{G(m)}\bigcup_{j=m}^{G(m)} P(a,i,j)^c\right)> 2^{-k}\right).
\]
For a contradiction, suppose now that $X$ converges almost uniformly metastable pointwisely. By instantiating the corresponding notion with $k+1$ and $a$, we get
\[
\forall F\exists b\left( \PP\left(\bigcap_{m=0}^b\bigcup_{i=m}^{F(m)}\bigcup_{j=m}^{F(m)}P(a,i,j)^c\right)\leq 2^{-(k+1)}\right).
\]
Thus, by $\QFAC$ (again after suitably prenexing), we get a functional $M$ such that
\[
\forall F\left( \PP\left(\bigcap_{m=0}^{M(F)}\bigcup_{i=m}^{F(m)}\bigcup_{j=m}^{F(m)}P(a,i,j)^c\right)\leq 2^{-(k+1)}\right).
\]
Defining $M'(F)=M(\lambda n.\tilde{G}(F(n)))$ where $\tilde{G}(n)=\max_{m \le n} G(m)$, we get 
\[
\forall F\left( \PP\left(\bigcap_{m=0}^{M'(F)}\bigcup_{i =m}^{\tilde{G}(F(m))}\bigcup_{j=m}^{\tilde{G}(F(m))}P(a,i,j)^c\right)\leq 2^{-(k+1)}\right).
\]
We now define a sequence of events $A$ via
\[
A(n):=\bigcup_{i=n}^{G(n)}\bigcup_{j=n}^{G(n)}P(a,i,j)^c.
\]
Then we have
\[
\bigcup_{n=m}^{F(m)}A(n) = \bigcup_{n=m}^{F(m)}\bigcup_{i=n}^{G(n)}\bigcup_{j=n}^{G(n)}P(a,i,j)^c=\bigcup_{i=m}^{\tilde{G}(F(m))}\bigcup_{j=m}^{\tilde{G}(F(m))}P(a,i,j)^c
\]
for all $m$ and $F$ and therefore this implies 
\[
\forall F\left( \bigcap_{m=0}^{M'(F)}\bigcup_{n=m}^{F(m)}A(n) = \bigcap_{m=0}^{M'(F)}\bigcup_{i=m}^{\tilde{G}(F(m))}\bigcup_{j=m}^{\tilde{G}(F(m))}P(a,i,j)^c\right).
\]
By the extensionality of $\PP$, we get
\[
\forall F\left( \PP\left(\bigcap_{m=0}^{M'(F)}\bigcup_{n=m}^{F(m)}A(n)\right) \leq 2^{-(k+1)}\right),
\]
which yields, by Theorem \ref{thrm:combinatorial_lemma}, that there exists an $m$ such that
\[
\PP(A(m)) = \PP\left(\bigcup_{i=m}^{G(m)}\bigcup_{j=m}^{G(m)} P(a,i,j)^c\right) < 2^{-k}
\]
which is a contradiction.
\end{proof}

As a last formal elucidation of some of the results from \cite{ADR2012}, we turn to Theorem 3.2 therein, where the authors provide a quantitative version for a special case of the dominated convergence theorem, strengthening preceding results from Tao \cite{Tao2008}. Concretely, they assume for this special case that the random variables are positive and bounded (w.l.o.g.) by $1$ (which immediately yields the ``uniform'' majorizability of the sequence $X$ and thus guarantees the full independence of the rate from $X$ via the preceding metatheorems). The following result now establishes that the corresponding infinitary convergence result can be proved in our system for integrals over probability contents $\mathcal{F}^\omega[\PP,\mathrm{Integral},X]$ and therefore makes it possible to recognize the quantitative results extracted in \cite{ADR2012} as an application of the metatheorems established in this paper.

\begin{theorem}
The system $\mathcal{F}^\omega[\PP,\mathrm{Integral},X]$ proves: if $X$ converges almost uniformly metastable pointwisely and satisfies $\forall n^0,x^\Omega\left(0\leq_\mathbb{R} X(n)(x)\leq_\mathbb{R}1\right)$, it holds that
\[
\forall k^0\exists n^0 \forall i^0,j^0\left(i,j\ge_0 n \to \left \vert\INT{X(i)} - \INT{X(j)}\right\vert\le_\RR 2^{-k}\right).
\]
\end{theorem}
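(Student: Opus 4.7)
The plan is to first upgrade the hypothesis via Theorem \ref{thrm:au_to_aump} and then decompose the integral of $|X(i)-X(j)|$ over a suitable event and its complement, exploiting the uniform bound $X(n)\le 1$ on the complement.

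First I invoke Theorem \ref{thrm:au_to_aump} to upgrade the almost uniform metastable pointwise convergence of $X$ to almost uniform convergence with respect to finite unions. Given $k^0$, I then instantiate Definition \ref{def:au} with both of its parameters set to $k+1$, obtaining some $N$ such that for every $c^0$,
\[
\PP\left(\bigcup_{i=N}^c\bigcup_{j=N}^c P(k+1,i,j)^c\right)\le_\mathbb{R} 2^{-(k+1)}.
\]
For any $i,j\ge N$, taking $c:=\max\{i,j\}$ and using monotonicity of $\PP$ (Proposition \ref{prop:properties_of_P}) yields $\PP(P(k+1,i,j)^c)\le 2^{-(k+1)}$. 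I claim that this $N$ witnesses the Cauchy property for this $k$.

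To verify the claim, set $A:=P(k+1,i,j)$ for fixed $i,j\ge N$. The closure axioms $(I)_6$--$(I)_8$ ensure that $|X(i)-X(j)|$, $|X(i)-X(j)|\chi_A$, and $|X(i)-X(j)|\chi_{A^c}$ all lie in $I$, and since $\chi_A+\chi_{A^c}=1$ pointwise, linearity $(\int)_2$ together with extensionality of $\int$ (Lemma \ref{lem:intProp}(2)) decomposes
\[
\INT{|X(i)-X(j)|}=\INT{|X(i)-X(j)|\chi_A}+\INT{|X(i)-X(j)|\chi_{A^c}}.
\]
On $A$, axioms $(\mathrm{Inv})_2$ and $([\cdot,\cdot])_2$ force $|X(i)(x)-X(j)(x)|\le 2^{-(k+1)}$, so pointwise $|X(i)-X(j)|\chi_A\le 2^{-(k+1)}\chi_A$; monotonicity of $\int$ (Lemma \ref{lem:intProp}(1)) combined with $(\int)_1$ then bounds the first summand by $2^{-(k+1)}\PP(A)\le 2^{-(k+1)}$. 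On $A^c$, the hypothesis $0\le X(n)(x)\le 1$ gives $|X(i)-X(j)|\chi_{A^c}\le\chi_{A^c}$ pointwise, so the second summand is at most $\PP(A^c)\le 2^{-(k+1)}$ by the previous step. Adding the two bounds gives $\INT{|X(i)-X(j)|}\le 2^{-k}$, and then Lemma \ref{lem:intProp}(5) together with $(\int)_2$ yields $|\INT{X(i)}-\INT{X(j)}|=|\INT{X(i)-X(j)}|\le\INT{|X(i)-X(j)|}\le 2^{-k}$.

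The only real delicacy lies in the intensional nature of $P(a,i,j)$ as the preimage under $(\cdot)^{-1}$ of the intensional interval $[0,2^{-a}]$: the axioms for $[\cdot,\cdot]$ only yield the implication ``$r\in[a,b]\Rightarrow a\le r\le b$'' and not its converse, but this is precisely the direction needed for the estimate on $A$, so the intensionality does not obstruct the argument.
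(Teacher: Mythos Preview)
Your proof is correct and follows essentially the same route as the paper: upgrade to almost uniform convergence via Theorem \ref{thrm:au_to_aump}, then split $\INT{|X(i)-X(j)|}$ over a good set and its complement. Two minor, harmless differences: you work directly with the single event $P(k+1,i,j)$ rather than the full intersection $\bigcap P(k+1,a,b)$ the paper keeps, and you exploit the hypothesis $0\le X(n)\le 1$ to bound $|X(i)-X(j)|$ by $1$ on the bad set (the paper uses the cruder bound $2$, which is why it needs the probability estimate $2^{-(k+2)}$ where you get by with $2^{-(k+1)}$).
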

\begin{proof}
Let $k$ be given. Since, by Theorem \ref{thrm:au_to_aump}, $X$ converges almost uniformly w.r.t.\ finite unions, there exists an $n$ such that
\[
\forall m\left( \PP\left(\bigcup_{a=n}^m\bigcup_{b=n}^m P(k+1,a,b)^c\right)\le 2^{-(k+2)}\right).
\]
Take $i,j\geq n$ and define $m=\max\{i,j\}$ as well as
\[
A:=\bigcup_{a=n}^m\bigcup_{b=n}^m P(k+1,a,b)^c.
\]
Similar as in the proof of item (4) from Lemma \ref{lem:intProp}, we have
\begin{align*}
\left\vert\INT{X(i)} - \INT{X(j)}\right\vert&\le\INT{\left\vert X(i)-X(j)\right\vert}\\
&=\INT{\left\vert X(i)-X(j)\right\vert\chi_A} + \INT{\left\vert X(i)-X(j)\right\vert\chi_{A^c}}.   
\end{align*}
As we have $\left\vert X(i)(x)-X(j)(x)\right\vert\le 2$ for all $x$, we get  
\[
\INT{\left\vert X(i)-X(j)\right\vert\chi_A}\le \INT{2\chi_A}= 2\PP(A)\le 2^{-(k+1)}.
\]
On the other hand, $x\in A^c$ yields 
\[
x\in \bigcap_{a=n}^m\bigcap_{b=n}^m P(k+1,a,b)
\]
which in particular gives, by definition of $m$, that $x\in P(k+1,i,j)$. From the definition of $P$, this in particular implies that
\[
\left \vert X(i)(x) -X(j)(x)\right \vert \le 2^{-(k+1)}
\]
and so we have $\left\vert X(i)(x) -X(j)(x)\right\vert\chi_{A^c}(x) \le 2^{-(k+1)}$ with yields
\[
\INT{\left\vert X(i) -X(j)\right\vert\chi_{A^c}}\le 2^{-(k+1)}
\]
and we are done.
\end{proof}

\section{Proof-theoretic transfer principles}\label{sec:transfer}

In this last section, we present how our systems and metatheorems allow for the proof of a general type of result, which we call a proof-theoretic transfer principle, that allows one to transfer quantitative information on implications between modes of convergence of real numbers to corresponding quantitative information on implications between analogous modes of convergence for bounded random variables. In particular, as this type of reasoning is very common in the literature on the convergence of various iterations of random variables (see e.g.\ \cite{Kol1930} among many others), this transfer principle allows for a logical explanation of the strategy of providing a proof-theoretic analysis of such results in practice by mainly analysing the underlying result on real numbers and then lifting this result together with some (often) simple modifications to random variables.\\

Concretely, to allow for a discussion of general modes of convergence for real numbers and random variables, we consider the following abstract formal setup: We throughout this section fix two $\Pi_3$-formulas 
\[
P(x^{1(0)},\underline{p}^{\underline{\sigma}})=\forall a^0\exists b^0\forall c^0 P_0(a,b,c,\widehat{x},\underline{p})
\]
and
\[
Q(x^{1(0)},\underline{p}^{\underline{\sigma}})=\forall u^0\exists v^0\forall w^0 Q_0(u,v,w,\widehat{x},\underline{p})
\]
where $P_0$ and $Q_0$ are quantifier-free formulas which only have the indicated variables free and where we write $\widehat{x}:=\lambda n.\widehat{x(n)}$. We understand $P$ and $Q$ as abstract representations of modes of convergence for the parameter sequence $x$ of real numbers with parameters $\underline{p}$. This perspective of $x$ representing a sequence of real numbers is also why we have used $\widehat{x}$ in the internal predicates $P_0$ and $Q_0$ inducing $P$ and $Q$.\\

For an example, we may take 
\[
P_0(a,b,c,\widehat{x}):= \forall i^0,j^0\left( b\leq_0 i,j\land i,j\leq_0c\to |\widehat{x(i)} - \widehat{x(j)}|\in [0,2^{-a}]\right),\tag{$**$}
\]
using the previous intensional intervals (where the above statement can thus be regarded as a quantifier-free statement). In that case, $P$ represents the usual Cauchy-property for $x$.\\

To allow for a discussion of these modes applied to random variables, we extend the system $\mathcal{F}^\omega[\PP]$ with four further constants
\[
X^{1(\Omega)(0)},\; P^{S\underline{\sigma}^t(0)(0)(0)},\; Q^{S\underline{\sigma}^t(0)(0)(0)},\; \tau^{0(0)},
\]
together with the axioms
\begin{gather*}
\forall \underline{p}^{\underline{\sigma}},a^0,b^0,c^0,z^\Omega(z \in P(a,b,c,\underline{p}) \leftrightarrow P_0(a,b,c,\lambda n.\widehat{X(n)(z)},\underline{p})),\\
\forall \underline{p}^{\underline{\sigma}},a^0,b^0,c^0,z^\Omega(z \in Q(a,b,c,\underline{p}) \leftrightarrow Q_0(a,b,c,\lambda n.\widehat{X(n)(z)},\underline{p})),\\
\forall n^0, z^\Omega(\tau(n) \le_0 \tau(n+1)\land \tau(n) \ge_\RR \vert X(n)(z)\vert ),
\end{gather*}
specifying that the properties $P_0$ and $Q_0$ (inducing the predicates $P$ and $Q$) induce measurable sets pointwisely relative to the sequence of random variables\footnote{If we assume that $X$ is weakly Borel-measurable in the context of the above example $(**)$, the corresponding point sets $P,Q$ as above indeed are measurable.} specified by $X$ and that these random variables are all bounded via a suitable monotone sequence of bounds (i.e.\ that $X$ as a constant is majorized by $\tau$). It is clear that Theorem \ref{thm:metatheorem} extends to this system, which we denote by $\mathcal{U}^\omega$, as all constants are majorizable and since the new axioms are purely universal.\\

In this extended language, we can the provide a formula that represents the property $P$ if suitably lifted to the sequence of random variables represented by $X$:

\begin{definition}
We say that $X$ satisfies $P$ almost uniformly, and write $P(X)$ a.u., if 
\[
\forall \underline{p}^{\underline{\sigma}},k^0,a^0\exists b^0\forall c^0\left(\PP\left(P(a,b,c,\underline{p})^c\right)\le_\RR 2^{-k}\right).
\]
Similarly, we define $Q(X)$ a.u.
\end{definition}

If we consider the previous example for $P_0$ given in $(**)$, then by formulating $P(X)$ a.u.\ in this case we recover the notion of almost uniform convergence with respect to finite unions as given in Definition \ref{def:au} (i.e.\ the variant of almost uniform convergence implicitly considered in \cite{ADR2012}).\\

We now turn to our main result that provides a relationship between statements of the form 
\[
\forall \underline{p}^{\underline{\sigma}},x^{1(0)} (P(x,\underline{p}) \to Q(x,\underline{p}))
\]
and statements of the form
\[
P(X)\text{ a.u.} \to Q(X)\text{ a.u.}
\]
and which thereby not only establishes an upgrade-type theorem from relations between modes of convergence for sequences of reals to sequences of random variables but also allows for a transfer of the computational information obtainable for the implication in the premise to the implication in the conclusion.

\begin{theorem}\label{thrm:transfer}
Provably in $\mathcal{U}^\omega$, given functionals $V,A,C$ such that
\[
\forall x,\underbrace{\underline{p},x^*,B,u,w}_{\omega}\left( x^*\gtrsim x\land P_0(A\omega,B(A\omega),C\omega,\widehat{x},\underline{p})\to Q_0(u,V\underline{p}x^*Bu,w,\widehat{x},\underline{p})\right),
\]
we can construct $V',A',C'$ such that
\[
\forall \underbrace{\underline{p},B,k,u,w}_{\alpha}\left( \PP(P(A'\alpha,Bk(A'\alpha),C'\alpha,\underline{p})^c)\leq 2^{-k}\to \PP(Q(u,V'\underline{p}Bku,w,\underline{p})^c)\leq 2^{-k}\right).
\]
\end{theorem}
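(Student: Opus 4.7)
The plan is to reduce the probabilistic conclusion to a pointwise application of the arithmetical hypothesis, exploiting that the sequence of random variables represented by $X$ is uniformly bounded by the monotone sequence $\tau$. I would first construct a closed term $\tau^*$ of type $1(0)$ serving as a uniform majorant for the real-valued sequence $\lambda n.X(n)(z)$, independently of the point $z^\Omega$. Since $\tau(n) \geq_\RR \vert X(n)(z)\vert$ provably holds for all $z$ and $\tau$ is monotone in $n$, setting for instance $\tau^*(m) := (\tau(m)+1)_\circ$ yields such a majorant via Lemma \ref{lem:circprop}: the value $\tau^*(m)(k)$ dominates $X(n)(z)(j)$ for $m \geq n$ and $k \geq j$, and $\tau^*$ is itself monotone, so $\tau^* \gtrsim_{1(0)} \lambda n.X(n)(z)$ uniformly in $z$ in the sense of Lemma \ref{lem:majlemma}.

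With $\tau^*$ in hand, I would set
\begin{align*}
A'(\underline{p},B,k,u,w) &:= A(\underline{p},\tau^*,Bk,u,w),\\
C'(\underline{p},B,k,u,w) &:= C(\underline{p},\tau^*,Bk,u,w),\\
V'(\underline{p},B,k,u) &:= V(\underline{p},\tau^*,Bk,u),
\end{align*}
which corresponds to instantiating the tuple $\omega$ of the hypothesis with $(\underline{p},\tau^*,Bk,u,w)$ and the auxiliary function $B$ of the hypothesis with $Bk$. Working inside $\mathcal{U}^\omega$, I would then fix $\underline{p}, B, k, u, w$, set $\alpha := (\underline{p}, B, k, u, w)$, assume $\PP(P(A'\alpha,Bk(A'\alpha),C'\alpha,\underline{p})^c) \leq_\RR 2^{-k}$, and target the pointwise inclusion $P(A'\alpha,Bk(A'\alpha),C'\alpha,\underline{p}) \subseteq_S Q(u,V'\underline{p}Bku,w,\underline{p})$. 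For any $z$ in the left-hand side, the axiom for the constant $P$ yields $P_0(A'\alpha, Bk(A'\alpha), C'\alpha, \lambda n.\widehat{X(n)(z)}, \underline{p})$; instantiating the hypothesis with $x := \lambda n.X(n)(z)$ and $x^* := \tau^*$ then delivers $Q_0(u, V\underline{p}\tau^* Bk u, w, \lambda n.\widehat{X(n)(z)}, \underline{p})$, which by the axiom for $Q$ is exactly $z \in Q(u,V'\underline{p}Bku,w,\underline{p})$.

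The proof would conclude via Propositions \ref{pro:basic_prop_fields} and \ref{prop:properties_of_P}: the inclusion just established upgrades to $Q(u,V'\underline{p}Bku,w,\underline{p})^c \subseteq_S P(A'\alpha,Bk(A'\alpha),C'\alpha,\underline{p})^c$, and monotonicity of $\PP$ then gives $\PP(Q(u,V'\underline{p}Bku,w,\underline{p})^c) \leq_\RR \PP(P(A'\alpha,Bk(A'\alpha),C'\alpha,\underline{p})^c) \leq_\RR 2^{-k}$, as required.

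The main obstacle I anticipate is the first step: rigorously verifying that the closed term $\tau^*$ provably satisfies the strong majorization relation $\tau^* \gtrsim_{1(0)} \lambda n.X(n)(z)$ uniformly in $z$ at the level of the raw type-$0$ codes of the underlying Cauchy representatives. This uniformity is crucial because it ensures the hypothesis may be invoked for each $z$ with the same $x^*$, so that the constructed $V', A', C'$ are honestly independent of the sample point. Once this uniform majorization is secured, the remaining steps are essentially mechanical unfoldings of the defining axioms for $P$ and $Q$ together with the basic extensionality and monotonicity properties of $\PP$.
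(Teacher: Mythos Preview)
Your proposal is correct and follows essentially the same route as the paper: construct a uniform majorant from $\tau$, instantiate the arithmetical hypothesis pointwise at $x:=\lambda n.X(n)(z)$, obtain the set inclusion between the $P$- and $Q$-sets, and conclude via monotonicity of $\PP$. The only cosmetic differences are that the paper takes $\overline{\tau}:=\lambda n.(\tau(n)(0)+1)$ as its majorant and argues the complement inclusion $Q(\ldots)^c\subseteq_S P(\ldots)^c$ directly via the contrapositive of the hypothesis rather than first establishing the positive inclusion; neither choice affects the substance of the argument.
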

\begin{proof}
Given such $V,A,C$ and $\alpha=(\underline{p},B,k,u,w)$, we define 
\begin{gather*}
A'\alpha:=A\underline{p}\overline{\tau}(Bk)uw,\\
C'\alpha:=C\underline{p}\overline{\tau}(Bk)uw,\\
V'\underline{p}Bku:=V\underline{p}\overline{\tau}(Bk)u.
\end{gather*}
for $\overline{\tau}=\lambda n.(\tau(n)(0)+1)$. Let $z$ be arbitrary with $z\in Q(u,V'\underline{p}Bku,w,\underline{p})^c$. By the axioms of $\mathcal{U}^\omega$ and the definition of $V'$, we have
\begin{align*}
z\in Q(u,V'\underline{p}Bku,w,\underline{p})^c&\leftrightarrow z\in Q(u,V\underline{p}\overline{\tau}(Bk)u,w,\underline{p})^c\\
&\leftrightarrow \neg Q_0(u,V\underline{p}\overline{\tau}(Bk)u,w,\lambda n.\widehat{X(n)(z)},\underline{p})
\end{align*}
and the latter implies 
\[
\neg P_0(A\underline{p}\overline{\tau}(Bk)uw,Bk(A\underline{p}\overline{\tau}(Bk)uw),C\underline{p}\overline{\tau}(Bk)uw,\lambda n.\widehat{X(n)(z)},\underline{p})
\]
using the assumptions on $V,A,C$ and that $\tau(n)\geq \vert X(n)(z)\vert$ and so $\lambda i.(\tau(n)(0)+1)\gtrsim \widehat{X(n)(z)}$ for any $z$. This in turn is by definition of $A',V',C'$ equivalent to
\[
\neg P_0(A'\alpha,Bk(A'\alpha),C'\alpha,\lambda n.\widehat{X(n)(z)},\underline{p})
\]
and thus to
\[
z\in P(A'\alpha,Bk(A'\alpha),C'\alpha,\underline{p})^c.
\]
Thus, we have
\[
Q(u,V'\underline{p}Bku,w,\underline{p})^c\subseteq P(A'\alpha,Bk(A'\alpha),C'\alpha,\underline{p})^c
\]
as $z$ above was arbitrary and therefore, we get
\[
\PP(Q(u,V'\underline{p}Bku,w,\underline{p})^c)\leq \PP(P(A'\alpha,Bk(A'\alpha),C'\alpha,\underline{p})^c)
\]
by the monotonicity of $\PP$. This yields the claim.
\end{proof}

This result, while on a first look rather technical and abstract, has a very concrete use recently observed in applications by the first author \cite{Ner2023} and to illustrate this, we will now shortly discuss the extend of the above result and its use in mathematics:
\begin{enumerate}
\item Observe that the conclusion of Theorem \ref{thrm:transfer} is just a witnessed version of the Dialectica interpretation of
\[
P(X)\text{ a.u.} \to Q(X)\text{ a.u.}\tag{$+$}
\]
and therefore, under $\AC$, this witnessed Dialectica interpretation in particular implies $(+)$. In that way, whenever the premise of Theorem \ref{thrm:transfer} is established in $\mathcal{U}^\omega$, one immediately obtains the truth of $(+)$ and so the above result allows for a lift from a (quantitative) result on real numbers to a true result for random variables. Furthermore, another main benefit of the conclusion of Theorem \ref{thrm:transfer} is that it allows for the extraction of quantitative information in the sense that the functional $V'$ provides a transformation of a rate for the premise $P(X)$ a.u. into a rate for the conclusion $Q(X)$ a.u. Even further, as $V'$ can be constructed from $V$, this transformation of rates can be directly inferred from the transformation of rates $V$ of the presumed result for real numbers.
\item The premise of Theorem \ref{thrm:transfer} is essentially the Dialectica interpretation of the statement 
\[
\forall \underline{p}^{\underline{\sigma}},x^{1(0)} (P(x,\underline{p}) \to Q(x,\underline{p}))\tag{$++$}
\]
in the sense that the functionals $V,A,C$ represent realizers for this interpretation with the additional assumption that these realizers are suitably uniform, depending only on a majorant of the sequence $x^{1(0)}$. Although one can construct examples where such a uniformity of the realizers is not the case, in practice, for many theorems of the form $(++)$ that have a semi-constructive proof, such uniform realizers can be given. In particular, this is true for the forthcoming work by the first author on Kronecker's Lemma \cite{Ner2023} and an upcoming work by Oliva and Arthan on quantitative stochastic optimisation \cite{OA2023}. In both these cases, one uses reasoning about sequences of real numbers to obtain the analogous result about sequences of random variables and a computational interpretation can be given to this line of reasoning. Theorem \ref{thrm:transfer} then in particular provides an abstract generalization of this procedure and explains how this reasoning is substantiated by logical results.
\end{enumerate}

Lastly, in the following remark we discuss a counterexample illustrating the necessity of the majorizability of the sequence of random variables in Theorem \ref{thrm:transfer}:

\begin{remark}
For the above transfer principle to hold, the assumption of the boundedness of the sequence of random variables is necessary as the following example shows: Take $\Omega:= \NN$ and let $S$ be the collection of all finite and co-finite subsets of $\NN$, i.e.\ 
\[
S:= \{A \subseteq \NN: A \text{ is finite or } A^c \text{ is finite}\}.
\]
Furthermore, define the measure $\PP$ by $\PP(A) = 0$ if $A$ is finite and $\PP(A) = 1$ if $A^c$ is finite, for all $A \in S$. Now, we consider the two properties 
\[
P(x)=P_0(\widehat{x})\equiv 0=0\text{ and }Q(x)\equiv \exists n \forall mQ_0(n,m,\widehat{x})\equiv \exists n\forall m(n\geq_{\mathbb{Q}} [\widehat{x_0}](m))
\]
for a sequence $x=(x_n)$ of real numbers. Clearly, both $P$ and $Q$ are $\Pi_3^0$-formulas and are trivially true for all sequences $x$. Therefore also $P(x)\to Q(x)$ is trivially true. Further, we can easily give $V,A,C$ that satisfy the assumptions of Theorem \ref{thrm:transfer}. Now, consider 
\[
X_n:\NN \to \NN,\quad k\mapsto k
\]
for each $n$. Then the set $Q(n,m)$ corresponding to $Q_0$ is just 
\[
Q(n,m)=\{k\in\mathbb{N}\mid Q_0(n,m,\lambda l.\widehat{X(l)(k)}\}=\{k\in\mathbb{N}\mid n\geq_{\mathbb{Q}} [\widehat{X(0)(k)}](m)\}=\{k\in\mathbb{N}\mid n\geq k\}
\]
which belongs to $S$ as it is finite. $P_0$ is just represented by the full set $\mathbb{N}$. Therefore, $X$ satisfies $P$ almost uniformly and does not satisfy $Q$ almost uniformly as any $Q(n,m)^c$ has measure $1$.
\end{remark}

\noindent
{\bf Acknowledgments:} The first author was partially supported by the EPSRC Centre for Doctoral Training in Digital Entertainment (EP/L016540/1). The second author was supported by the `Deutsche Forschungs\-gemein\-schaft' Project DFG KO 1737/6-2. Both authors want to thank Thomas Powell, Ulrich Kohlenbach, Henry Towsner and Jos\'e Iovino for insightful discussions on the topics of this paper.

\noindent
{\bf Declarations of interest:} none

\bibliographystyle{plain}
\bibliography{ref}

\end{document}